\newtheorem{theorem}{Theorem}[section]
\newtheorem{proposition}[theorem]{Proposition}
\newtheorem{lemma}[theorem]{Lemma}
\newtheorem{corollary}[theorem]{Corollary}
\newtheorem{example}[theorem]{Example}
\newtheorem{definition}[theorem]{Definition}
\newtheorem{thm}{Theorem}
\def\diam{\mathrm{diam}}
\def\mcD{\mathcal{D}}
\def\mcE{\mathcal{E}}
\def\mcF{\mathcal{F}}
\def\msG{\mathscr{G}}
\def\USC{\mathcal{USC}}
\def\mcB{\mathcal{B}}
\numberwithin{equation}{section}
\begin{document}
\title[Uniqueness and convergence of resistance forms on $\USC$'s]{Uniqueness and convergence of resistance forms on unconstrained Sierpinski carpets}

\author{Shiping Cao}
\address{Department of Mathematics, Cornell University, Ithaca 14853, USA}
\email{sc2873@cornell.edu}
\thanks{}

\author{Hua Qiu}
\address{Department of Mathematics, Nanjing University, Nanjing, 210093, P. R. China.}
\thanks{The research of Qiu was supported by the National Natural Science Foundation of China, grant 12071213, and the Natural Science Foundation of Jiangsu Province in China, grant BK20211142.}
\email{huaqiu@nju.edu.cn}

%    General info
\subjclass[2010]{Primary 28A80, 31E05}

\date{}

\keywords{unconstrained Sierpinski carpets, Dirichlet forms, diffusions, self-similar sets}

\maketitle

\begin{abstract}
We prove the uniqueness of self-similar $D_4$-symmetric resistance forms on unconstrained Sierpinski carpets ($\USC$'s). Moreover, on  a sequence of $\USC$'s $K_n, n\geq 1$ converging in Hausdorff metric, we show that the associated diffusion processes converge in distribution if and only if the geodesic metrics on $K_n, n\geq 1$ are equicontinuous with respect to the Euclidean metric.
\end{abstract}

\section{Introduction}
In a previous paper \cite{CQ3}, the authors constructed self-similar $D_4$-Dirichlet forms on a class of fractals named unconstrained Sierpinski carpets ($\USC$'s). Compared with the classical Sierpinski carpets in analysis on infinitely ramified fractals\cite{BB,BB1,BB4,BB2,BB3,BBKT,KZ}, we allow the level-$1$ cells of $\USC$'s to live off the $1/k$ grids, so that two cells may intersect at a line of irrational length. See Figure \ref{fig1} for some examples.

\begin{figure}[htp]
    \includegraphics[width=4.9cm]{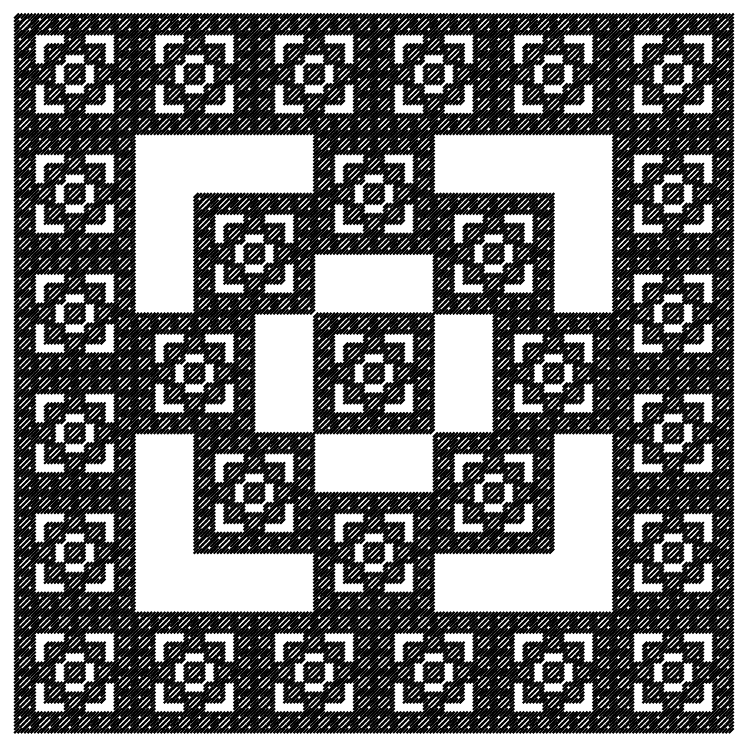}\hspace{1cm}
    \includegraphics[width=4.9cm]{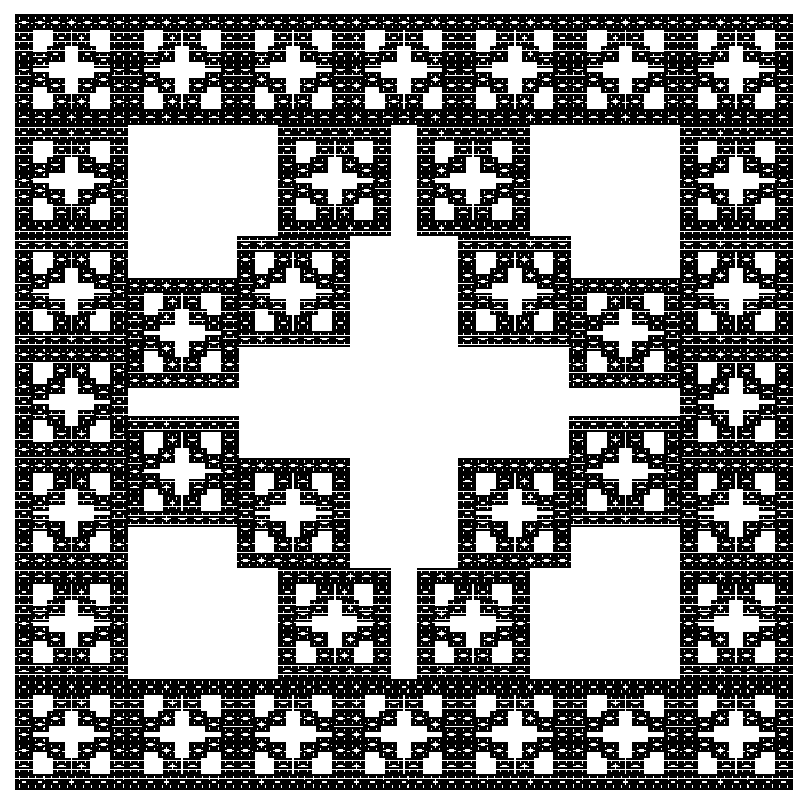}
    \caption{Unconstrained Sierpinski carpets ($\USC$'s).}
    \label{fig1}
\end{figure}

On $\USC$'s, the Knight move and corner move arguments developed by Barlow and Bass \cite{BB,BB3,BBKT} are no longer available, and the method of resistance estimate in \cite{BB4} does not work well. Hence, the uniqueness of self-similar $D_4$-symmetric resistance forms on $\USC$'s is not obvious. As a first goal of the paper, we will prove the following Theorem \ref{thm61}, where $L_2,L_4$ are the left and right border lines of a $\USC$.

\begin{thm}\label{thm61}
    Let $K$ be a $\USC$. There is a unique self-similar $D_4$-symmetric resistance form $(\mcE,\mcF)$ on $K$ such that the associated resistance metric $R$ is jointly continuous on $K\times K$ and satisfies $R(L_2,L_4)=1$.
\end{thm}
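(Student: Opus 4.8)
The existence of $(\mcE,\mcF)$ and the joint continuity of $R$ are supplied by the construction in \cite{CQ3}, and the normalization $R(L_2,L_4)=1$ only fixes the overall scale; so the real content is uniqueness. The plan is to reduce the statement to a uniqueness assertion for the renormalization of crossing conductances, and then to prove that assertion by a self-improving comparison argument that bypasses the Knight and corner moves.

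First I would set up the skeleton. A self-similar $D_4$-symmetric form obeys
\[
\mcE(u,u)=\sum_{i}\rho_i\,\mcE(u\circ F_i,u\circ F_i),
\]
where $\{F_i\}$ are the level-$1$ contractions of $K$ and, by $D_4$-symmetry, the weights $\rho_i$ are constant along each $D_4$-orbit of cells. Iterating this identity expresses $\mcE$ as the monotone (increasing) limit of the discrete energies $\mcE^{(n)}$ on the level-$n$ cell networks, and $R$ as the limit of the corresponding discrete resistance metrics; joint continuity of $R$ ensures that these approximations determine the form on all of $\mcF$. Hence two such forms coincide once their level-$n$ networks agree for every $n$, and by self-similarity this reduces to matching the trace of each form on the boundary lines $L_1,L_2,L_3,L_4$ of a single cell, the scalar normalization $R(L_2,L_4)=1$ already pinning one degree of freedom.

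Next, given two normalized forms $\mcE_1,\mcE_2$ with common domain $\mcF_1=\mcF_2$, I would first prove a crude comparison $C^{-1}\mcE_2\le\mcE_1\le C\,\mcE_2$. This should come from Rayleigh monotonicity alone: shorting and cutting along the segments where adjacent cells are glued give series/parallel bounds on all crossing resistances, and the normalization turns these into a uniform $C$, with no recourse to probabilistic coupling. The decisive step is to drive $C$ down to $1$. Let $C_n$ be the best comparison constant achievable using only the level-$n$ decomposition; the renormalization identity makes $C_n$ obey a self-improving recursion, and the $D_4$-symmetry forces the energy-minimizing (harmonic) functions realizing $R_i(L_2,L_4)$ to share the reflection and rotation symmetries, equalizing the boundary flux across the two forms and collapsing the recursion so that $C_n\downarrow 1$. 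Passing to the monotone limit then yields $\mcE_1=\mcE_2$.

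The main obstacle is precisely this last collapse. On classical carpets it is exactly what the Barlow--Bass Knight- and corner-move couplings achieve, and these are unavailable here because $\USC$ cells neither lie on a common grid nor meet along commensurable segments. I therefore expect the crux to be an infinite-dimensional fixed-point uniqueness: the renormalization acts on the cone of $D_4$-symmetric Dirichlet forms on the boundary lines, which is infinite-dimensional since a cell boundary is a full segment rather than a finite vertex set, and one must show it has a unique fixed ray. A natural mechanism would be a strict contraction of this map in Hilbert's projective metric, made available by the resistance-form rigidity (continuity of $R$) together with the strong symmetry; establishing such a strict contraction, rather than merely the non-strict comparison of the previous paragraph, is where I expect the real work to lie.
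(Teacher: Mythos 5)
Your proposal has a genuine gap at exactly the step you flag as ``where I expect the real work to lie'': you never prove that the comparison constant collapses to $1$, you only conjecture a mechanism (strict contraction of the renormalization map in Hilbert's projective metric on the infinite-dimensional cone of boundary forms). For infinitely ramified sets this contraction is precisely what nobody knows how to establish --- it is the reason the Knight/corner moves were invented for classical carpets and the reason \cite{BBKT} invented a different argument when those moves fail. The paper does \emph{not} drive any constant to $1$. It runs the \cite{BBKT} amplification trick in the opposite direction: from Corollary \ref{coro64} one gets a \emph{universal} constant $C$ (depending only on $K$) such that every pair of normalized self-similar $D_4$-symmetric forms satisfies $\sup(\mcE'|\mcE)/\inf(\mcE'|\mcE)\leq C^2$; then Proposition \ref{prop65} (the \cite[Theorem 2.1]{BBKT} subtraction lemma) shows that if $\mcE\neq\mcE'$, the form $(1+\eta)\mcE'-\inf(\mcE'|\mcE)\,\mcE$, suitably scaled, is again an admissible normalized form whose comparison ratio with $\mcE$ exceeds $\eta^{-1}\big(\sup(\mcE'|\mcE)/\inf(\mcE'|\mcE)-1\big)$. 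Letting $\eta\to 0$ blows this ratio up and contradicts universality. No fixed-point or contraction statement is ever needed; distinctness is amplified into a contradiction rather than contracted away.

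Your preliminary step is also under-justified. Rayleigh monotonicity (shorting/cutting) compares resistances \emph{within} one form under network modification; it does not compare two different forms, and even a two-sided comparison of resistance \emph{metrics} would not by itself yield the energy comparison $C^{-1}\mcE_2\leq\mcE_1\leq C\,\mcE_2$, nor the identity of domains $\mcF_1=\mcF_2$ that you simply assume. Moreover the two forms could a priori have different renormalization factors $r_1\neq r_2$, hence resistance metrics with different scaling exponents, and no shorting argument rules this out. The paper needs real machinery here: the trace theorem (Theorem \ref{thm43}) and the uniform resistance estimate $R(x,y)\asymp d_G(x,y)^{\theta}$ (Theorem \ref{thm52}) feed Kigami's criteria to produce sub-Gaussian heat kernel bounds (\ref{eqn61}), and then the Besov characterization of \cite{GHL} (Proposition \ref{prop63}) identifies the common domain as $B_{2,\infty}^{d_W/2}(K)$ with comparable energies; crucially, part (b) of that proposition characterizes $d_W$ intrinsically in terms of $(K,d,\mu)$ alone, which is what forces $r_1=r_2$ and makes the comparison constant universal --- the property the final contradiction argument depends on. So both halves of your outline --- the crude comparison and the collapse to equality --- are replaced in the paper by different, and essentially unavoidable, arguments.
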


Second, as an application, by using the tool of $\Gamma$-convergence (see the book \cite{D} for basic knowledge, and \cite{C,KS,Mosco} for related papers), we will prove that the associated diffusion processes converge in distribution if the $\USC$'s converges in Hausdorff metric and under suitable geometric condition. In particular, we will prove Theorem \ref{thm2} below (see Theorem \ref{thm71} for a detailed statement), and in Subsection \ref{app2}, we will see that the convergence of resistance metrics implies the convergence of processes.

\begin{thm}\label{thm2}
    Let $K$, $K_n$, $n\geq 1$ be $\USC$'s such that $K_n$ converges to $K$ in Hausdorff metric, and $R$, $R_n$, $n\geq 1$ be the resistance metrics on them according to Theorem \ref{thm61}. Then
    $R_n\rightarrowtail R$
    if and only if the geodesic metrics $d_{G,n}$ on $K_n$, $n\geq 1$ are equicontinuous.
\end{thm}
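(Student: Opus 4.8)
The plan is to prove the two implications separately, using the comparison between the resistance metric $R_n$ and the geodesic metric $d_{G,n}$ as the bridge, and the $\Gamma$-convergence framework together with the uniqueness Theorem \ref{thm61} to identify limits. Throughout I regard all $K_n$ and $K$ as compact subsets of the common unit square $[0,1]^2$, so that $R_n\rightarrowtail R$ means $R_n(x_n,y_n)\to R(x,y)$ whenever $x_n,y_n\in K_n$ with $x_n\to x$ and $y_n\to y$ in the Euclidean metric. The first ingredient I would record is a chaining estimate of the form $R_n(x,y)\le C\,\varphi(d_{G,n}(x,y))$, valid uniformly in $n$ with a modulus $\varphi$ continuous at $0$ and $\varphi(0)=0$; this follows from self-similarity and the series/parallel laws by covering a shortest path by a controlled number of cells and summing cell resistances, as in the construction of \cite{CQ3}.

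For the direction ``equicontinuity $\Rightarrow$ convergence,'' I would combine this uniform chaining estimate with the assumed equicontinuity of $d_{G,n}$: for Euclidean-close points, equicontinuity makes $d_{G,n}(x,y)$ small uniformly in $n$, and the chaining bound then makes $R_n(x,y)$ small uniformly in $n$. Thus $\{R_n\}$ is an equicontinuous family of metrics on domains converging in Hausdorff metric, so by an Arzela-Ascoli argument every subsequence admits a further subsequence along which $R_n\rightarrowtail \tilde R$ for some continuous metric $\tilde R$ on $K$, and correspondingly the forms $(\mcE_n,\mcF_n)$ are precompact for $\Gamma$-convergence. Any such $\Gamma$-limit is again a self-similar $D_4$-symmetric resistance form on $K$, since the defining contractions and the $D_4$-action pass to the limit; and the normalization survives, because $L_{2,n},L_{4,n}\to L_2,L_4$ together with equicontinuity forces $\tilde R(L_2,L_4)=\lim_n R_n(L_{2,n},L_{4,n})=1$. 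By the uniqueness statement of Theorem \ref{thm61}, $\tilde R=R$. As the subsequential limit is always $R$, the whole sequence satisfies $R_n\rightarrowtail R$.

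For the converse ``convergence $\Rightarrow$ equicontinuity,'' I would argue by contraposition. If $\{d_{G,n}\}$ is not equicontinuous, there are $\varepsilon_0>0$ and points $x_n,y_n\in K_n$ with $|x_n-y_n|\to 0$ but $d_{G,n}(x_n,y_n)\ge\varepsilon_0$; passing to a subsequence, $x_n,y_n\to z\in K$. The geodesic gap means that inside the ball of radius $\varepsilon_0/2$ about $z$ the points $x_n$ and $y_n$ lie in pieces of $K_n$ separated by a thin neck, so I can build a test function $f_n$ that equals $1$ near $x_n$ and $0$ near $y_n$, varies only across this neck, and has energy $\mcE_n(f_n,f_n)$ bounded above uniformly in $n$. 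The defining inequality $|f_n(x_n)-f_n(y_n)|^2\le R_n(x_n,y_n)\,\mcE_n(f_n,f_n)$ then yields $R_n(x_n,y_n)\ge c>0$. On the other hand, $R_n\rightarrowtail R$ and $x_n,y_n\to z$ would give $R_n(x_n,y_n)\to R(z,z)=0$, a contradiction. Hence convergence forces equicontinuity.

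The main obstacle I expect is the converse direction's construction of the localized test function with uniformly bounded energy: translating the purely geodesic separation $d_{G,n}(x_n,y_n)\ge\varepsilon_0$ into a genuine, energetically controllable neck requires a quantitative description of how the level-$m$ cell structure of $K_n$ degenerates, and care is needed because a long geodesic distance does not by itself force large resistance when many parallel channels are available. Controlling this uniformly in $n$ and compatibly with the Hausdorff convergence is the delicate point; by comparison, the forward direction's $\Gamma$-compactness and the identification via Theorem \ref{thm61} are routine once the uniform chaining estimate and the normalization passage are in hand.
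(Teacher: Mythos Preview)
Your outline for the forward direction (equicontinuity $\Rightarrow$ convergence) is correct in spirit, but you have misidentified which step is hard. You write that ``any such $\Gamma$-limit is again a self-similar $D_4$-symmetric resistance form on $K$, since the defining contractions and the $D_4$-action pass to the limit,'' and then call this routine. It is not. Given a recovery sequence $f_n\rightarrowtail f$ with $\mcE_n(f_n)\to\mcE_\infty(f)$, restricting to cells gives $\mcE_\infty(f)\geq\sum_i r_\infty^{-1}\mcE_\infty(f\circ\Psi_i)$ easily. For the reverse inequality you must glue together recovery sequences $g_{n,i}\rightarrowtail f\circ\Psi_i$ into a single function on $K_n$, and the pieces $g_{n,i}\circ\Psi_{n,i}^{-1}$ need not match along the cell interfaces $\partial_1K_n$. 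The paper first proves the self-similar identity only for $f$ vanishing on $\partial_1K$ (where gluing by zero is legitimate), then passes to energy measures and shows $\nu_f(\partial_1K)=0$ for every $f\in\mcF_\infty$; this last fact uses the heat kernel estimate for the limit form together with the uniform-domain property of $K\setminus\partial_0K$ (Proposition~\ref{prop24}) and Murugan's extension theorem. Without some mechanism of this kind, the self-similarity of the limit is a genuine gap in your argument.

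Conversely, for the direction (convergence $\Rightarrow$ equicontinuity) you have made things much harder than necessary and then flagged your own artificial difficulty as the main obstacle. The two-sided estimate of Theorem~\ref{thm52} already gives $R_n(x,y)\geq C_1\,d_{G,n}(x,y)^{\theta_n}$ with $C_1$ depending only on $k$ and $\theta_n=-\log r_n/\log k$ bounded away from $0$ and $\infty$ by Lemma~\ref{lemma410}. If $R_n\rightarrowtail R$ then $\{R_n\}$ is equicontinuous by Lemma~\ref{lemmab2}(b), and the inequality above transfers this directly to $\{d_{G,n}\}$. No ad hoc test function across a ``thin neck'' is required; that construction is precisely the content of the lower bound in Theorem~\ref{thm52}, already proved via the trace theorem. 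Your worry that ``long geodesic distance does not by itself force large resistance when many parallel channels are available'' is answered there: on a $\USC$ it does, uniformly in the carpet.
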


We briefly introduce the structure of the paper.

First, as the preliminary parts, in Section \ref{sec2}, we introduce the precise definition and some geometric properties of $\USC$'s; in Section \ref{resis}, we review the definition of resistance forms and resistance metrics of Kigami \cite{ki3,ki4}, and recall some facts about the $\Gamma$-convergence of resistance forms.

Then, in Sections \ref{sec4} and \ref{sec5}, we establish the effective resistance estimates, where the constants depend only on $1/k$, the side length of level-$1$ cells, using the geodesic metrics. The proof of the lower bound estimate of effective resistances uses a trace theorem established in Section \ref{sec4}, which allows us to construct nice bump functions. The proof of the upper bound estimate of effective resistances uses the self-similar property of the form and the fact that the renormalization factor $r$ is smaller than $1-\frac{1}{k^2}$.

Finally, in Section \ref{sec6}, we prove the uniqueness theorem, using the idea of \cite{BBKT} and the effective resistance estimates. In Section \ref{sec7}, we prove the convergence of resistance metrics. By the uniform resistance estimates, we know that the resistance metrics are equicontinuous if and only if the geodesic metrics are equicontinuous, so there is a subsequential limit resistance form. The main ingredient of the proof focuses on proving that the limit form is self-similar, so we can apply the uniqueness theorem.

\section{The geometry of unconstrained Sierpinski carpets}\label{sec2}
In this section, we present a brief review of the geometric properties of $\USC$'s. For a point $x$ in $\mathbb{R}^2$, from time to time, we will write $x=(x_1,x_2)$ to specify its  two coordinates. For two points $x,y$ in $\mathbb{R}^2$, we will always write
\[\overline{x,y}=\big\{(1-t)x+ty:0\leq t\leq 1\big\}\]
 the line segment connecting them. The metric $d$, when we talk about points in $\mathbb{R}^2$, will always be the Euclidean metric on $\mathbb{R}^2$. In addition, we write
\[d(A,B)=\inf_{x\in A,y\in B}d(x,y),\]
for $A,B\subset \mathbb{R}^2$, which is always positive if $A,B$ are disjoint compact sets. We also write $d(x,A)=d(\{x\},A)$ for short. Readers should distinguish this with the \textit{Hausdorff metric} $\delta$:
\[
\delta(A,B)=\max\{\sup_{x\in B}d(x,A),\,\sup_{x\in A}d(x,B)\}.
\]

Let $\square$ be a unit square in $\mathbb{R}^2$. We let
\[q_1=(0,0),\quad q_2=(1,0),\quad q_3=(1,1),\quad q_4=(0,1)\]
be the four vertices of $\square$.
For convenience, we denote the group of self-isometries on $\square$ by
\[\msG=\big\{\Gamma_v,\Gamma_h,\Gamma_{d_1},\Gamma_{d_2},id,\Gamma_{r_1},\Gamma_{r_2},\Gamma_{r_3}\big\},\]
where $\Gamma_v,\Gamma_h,\Gamma_{d_1},\Gamma_{d_2}$ are \textit{reflections} ($v$ for vertical, $h$ for horizontal, $d_1,d_2$ for two diagonals),
\begin{equation}\label{eqn21}
\begin{aligned}
    &\Gamma_v(x_1,x_2)=(x_1,1-x_2),\qquad \Gamma_h(x_1,x_2)=(1-x_1,x_2),\\
    &\Gamma_{d_1}(x_1,x_2)=(x_2,x_1),\qquad\Gamma_{d_2}(x_1,x_2)=(1-x_2,1-x_1),
\end{aligned}
\end{equation}
 for $x=(x_1,x_2)\in \square$, $id$ is the identity mapping, and $\Gamma_{r_1},\Gamma_{r_2},\Gamma_{r_3}$ are \textit{rotations},
\begin{equation}\label{eqn22}
    \Gamma_{r_1}(x_1,x_2)=(1-x_2,x_1),\quad \Gamma_{r_2}=(\Gamma_{r_1})^2,\quad \Gamma_{r_3}=(\Gamma_{r_1})^3,
\end{equation}
around the center of $\square$ counterclockwisely with angle $\frac{j\pi}{2}$, $j=1,2,3$.
In this paper, we will focus on  structures with $\msG$-symmetry.

\begin{definition}[Unconstrained Sierpinski carpets]\label{def21}\quad

Let $k\geq 3$ and $4(k-1)\leq N\leq k^2-1$. Let $\{\Psi_i\}_{1\leq i\leq N}$ be a finite set of similarities in the form $\Psi_i(x)=\frac x k+ c_i$, $c_i\in\mathbb{R}^2$. Assume the following holds:

\noindent\emph{(Non-overlapping).} $\Psi_i(\square)\cap \Psi_j(\square)$ is either a line segment, or a point, or empty, $i\neq j$.

\noindent\emph{(Connectivity). } $\bigcup_{i=1}^N \Psi_i(\square)$ is connected.

\noindent\emph{(Symmetry).} $\Gamma\big(\bigcup_{i=1}^N \Psi_i(\square)\big)=\bigcup_{i=1}^N \Psi_i(\square)$ for any $\Gamma\in \msG$.

\noindent\emph{(Boundary included).} $\overline{q_1,q_2}\subset\bigcup_{i=1}^N \Psi_i(\square)\subset\square$.

Then, call the unique compact subset $K\subset \square$ such that
\[K=\bigcup_{i=1}^N \Psi_iK\]
an \emph{unconstrained Sierpinski carpet} ($\USC$).
\end{definition}

Here $k\geq 3$ is to avoid trivial set by the symmetry condition, $N\geq 4(k-1)$ is a requirement of the boundary included condition, and  $N\leq k^2-1$ ensures that  we are dealing with a non-trivial planar self-similar set. Note that when $k=3$, $N=8$, $K$ is the standard Sierpinski carpet. See Figure \ref{figure1} for more  examples of $\USC$.

\begin{figure}[htp]
    \includegraphics[width=4.9cm]{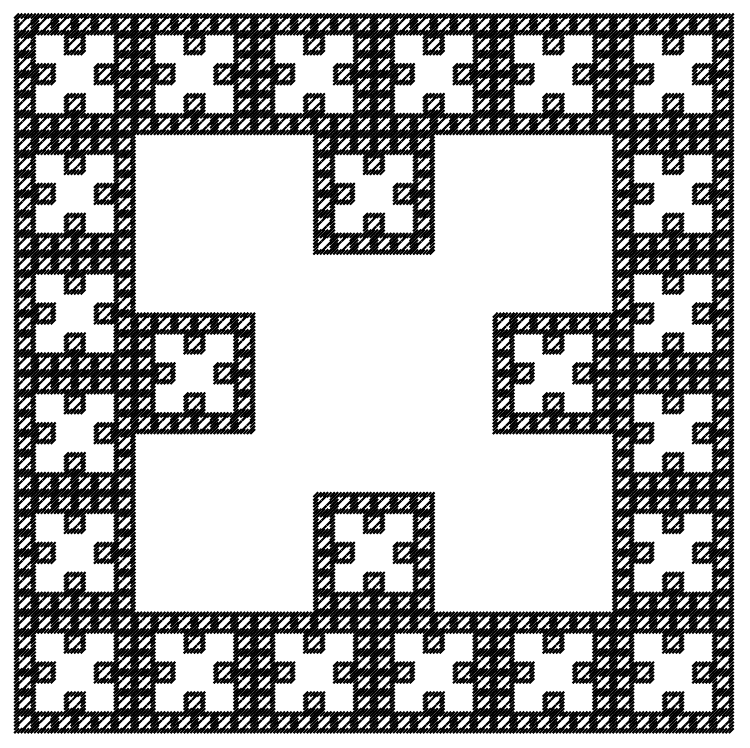}\hspace{0.2cm}
    \includegraphics[width=4.9cm]{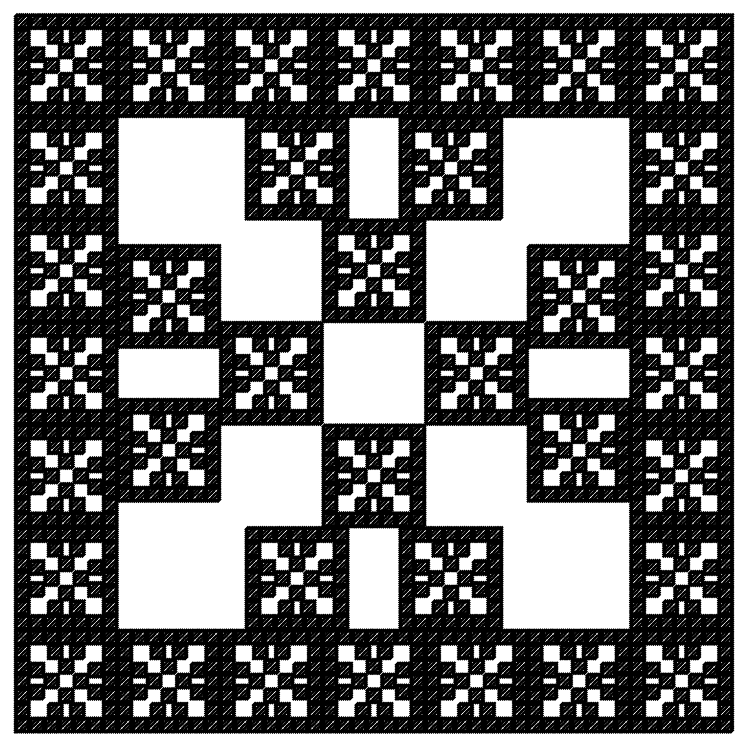}\hspace{0.2cm}
    \includegraphics[width=4.9cm]{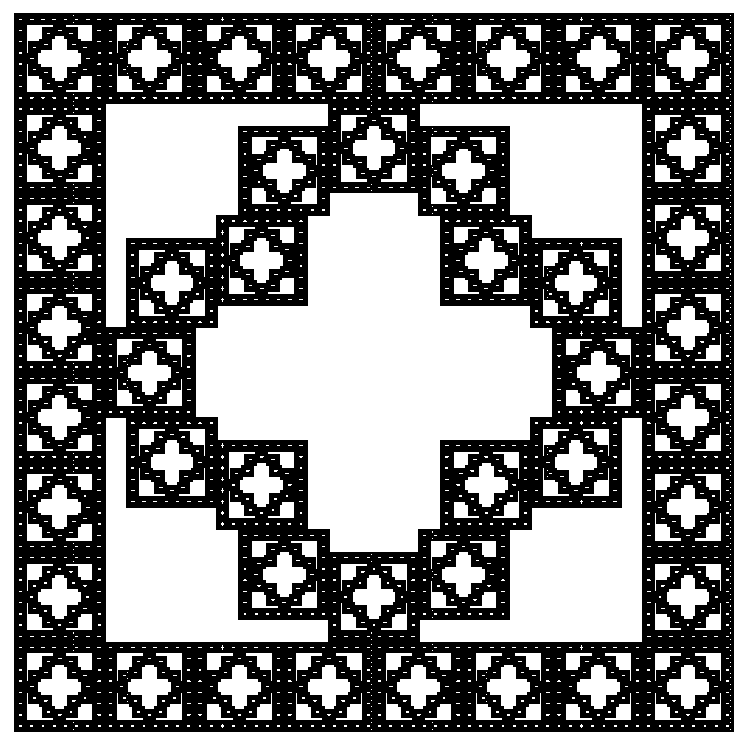}
    \caption{More $\USC$'s.}
    \label{figure1}
\end{figure}

 %Below we highlight some basic settings.\smallskip
\vspace{0.2cm}
\noindent\textbf{Basic settings and notations.}\vspace{0.2cm}

 (a). Throughout the paper, $k,N$ are fixed numbers.

(b). We number squares $\Psi_i(\square)$ along  the boundary of $\square$  in the following way. For $0\leq j\leq 3$ and $1\leq i\leq k-1$, we stipulate that
\begin{equation}\label{eqn23}
    \Psi_{(k-1)j+i}(x)=\frac{1}{k}(x-q_{j+1})+q_{j+1}+\frac{i-1}{k}(q_{j+2}-q_{j+1})
\end{equation}
with cyclic notation $q_5=q_1$.

(c). For convenience, we denote the four sides of $\square$ by
\begin{equation}
L_1=\overline{q_1,q_2},\quad L_2=\overline{q_2,q_3},\quad L_3=\overline{q_3,q_4},\quad L_4=\overline{q_4,q_1}.
\end{equation}

(d). We write $\partial_0 K=\bigcup_{i=1}^4 L_i$ for the square boundary of $K$.
\vspace{0.2cm}

Next, we introduce more notations about the word system.

\begin{definition}\label{def22}
    Let $W_0=\{\emptyset\},W_1=\{1,2,\cdots,N\}$ be the alphabet associated with $K$,  and for $n\geq 1$, $W_n=W_1^n:=\{w=w_1w_2\cdots w_n:w_i\in W_1\hbox{ for }1\leq i\leq n\}$ be the collection of \emph{words} of length $n$. Also, write $W_*=\bigcup_{n=0}^\infty W_n$ for the collection of finite words.

    (a). For every $w=w_1\cdots w_n\in W_n$, we denote $|w|=n$ the \emph{length} of $w$, and write
    \[\Psi_w=\Psi_{w_1}\circ \cdots \circ \Psi_{w_n}.\]
    Each $w\in W_n$ represents an \emph{$n$-cell} $\Psi_wK$ in $K$.

    (b). For $n\geq 0$, we define
    \[\partial W_n=\big\{w\in W_n: \Psi_wK\cap \partial_0 K\neq \emptyset\big\}.\]
    Also, we write
    \[W_{n,i}=\big\{w\in W_n:\Psi_wK\cap L_i\neq\emptyset\big\},\text{ for } i=1,2,3,4,\]
    and $W_{*,i}=\bigcup_{n=0}^\infty W_{n,i}$.
\end{definition}

In the previous paper \cite{CQ3}, the authors have shown that a $\USC$ $K$ has the following geometric properties (A1)-(A4). In particular, (A1), (A2), (A4) are the same  in \cite[Section 2]{KZ}, while (A3) is exactly the condition that ``\textit{the Euclidean metric is $2$-adapted}'' introduced in \cite{ki5} by Kigami.

\vspace{0.2cm}

(A1). \emph{The open set condition: there exists a non-empty open set $U\subset \mathbb{R}^2$ such that $\bigcup_{i=1}^N \Psi_iU\subset U$ and $\Psi_iU\cap \Psi_jU=\emptyset$ for every $i\neq j\in \{1,\cdots,N\}$. }

(A2). \emph{For each $w,w'\in W_n$, there is a finite collection of words $\{w=w^{(1)},\cdots, w^{(l)}=w'\}$ in $W_n$ satisfying $\Psi_{w^{(i)}}K\cap \Psi_{w^{(i+1)}}K\neq \emptyset$ for $1\leq i<l$.}

(A3). \emph{There is a constant $0<c_0<1$ satisfying the following.}

\emph{(a). If $x,y\in K$ and $d(x,y)< c_0k^{-n}$, then there exist $w, w',w''\in W_n$ such that $x\in \Psi_{w}K$, $y\in \Psi_{w'}K$ and $\Psi_{w}K\cap \Psi_{w''}K\neq \emptyset$, $\Psi_{w'}K\cap \Psi_{w''}K\neq \emptyset$. }

\emph{(b). If $w,w'\in W_n$ and there is no $w''\in W_n$ so that
    \[\Psi_wK\cap \Psi_{w''}K\neq \emptyset, \quad \Psi_{w'}K\cap \Psi_{w''}K\neq \emptyset,\]
then $d(x,y)\geq c_0 k^{-n}$ for any $x\in \Psi_wK$ and $y\in \Psi_{w'}K$. }

(A4). \emph{$\partial W_n\neq W_n$ for $n\geq 2$.}\vspace{0.2cm}

Finally, at the end of this section, we prove that $K\setminus\partial_0K$ is a uniform domain. Here, we follow the definition of \cite{Mathav}.

\begin{definition}\label{def23}
Let $(X,d)$ be a metric space. We call $U\subset X$ a \emph{uniform domain}, if $U$ is a connected, open, proper subset of $X$, and for some $\alpha\in (1,\infty)$, for every $x,y\in U$, there exists a curve $\gamma\subset U$ from $x$ to $y$ such that
\begin{align*}
    {\rm diam}(\gamma)&\leq \alpha\,d(x,y),\\
    d(z,X\setminus U)&\geq \alpha^{-1}\min\{d(z,x),d(z,y)\}\quad\hbox{ for every }z\in \gamma.
\end{align*}
\end{definition}

\begin{proposition}\label{prop24}
Let $K$ be a $\USC$, then $K\setminus \partial_0K$ is a uniform domain.
\end{proposition}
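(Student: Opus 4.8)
The plan is to verify, for $U=K\setminus\partial_0K$ regarded as a subset of the metric space $(K,d)$, the three requirements of Definition \ref{def23}. Openness and properness are immediate: $\partial_0K$ is a nonempty closed subset of the compact set $K$, so $U$ is open and a proper subset, and $K\setminus U=\partial_0K$. Connectedness will come for free once curves are produced, so the whole content is to construct, for each $x,y\in U$, a curve $\gamma\subset U$ from $x$ to $y$ with $\diam(\gamma)\le\alpha\,d(x,y)$ and $d(z,\partial_0K)\ge\alpha^{-1}\min\{d(z,x),d(z,y)\}$ for all $z\in\gamma$. The first ingredient I would set up is a \emph{corkscrew lemma}: there are constants $C,c_1>0$ so that for every $x\in K$ and every $0<\rho\le 1$ there is $p\in K$ with $d(x,p)\le C\rho$ and $d(p,\partial_0K)\ge c_1\rho$. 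This is exactly where (A4) enters. Fix $\sigma\in W_2\setminus\partial W_2$, nonempty by (A4); then $\Psi_\sigma K\cap\partial_0K=\emptyset$, so $c_1':=d(\Psi_\sigma K,\partial_0K)>0$. Choosing $n$ with $k^{-n}\approx\rho$ and $w\in W_n$ with $x\in\Psi_wK$, the sub-cell $\Psi_{w\sigma}K\subset\Psi_wK$ satisfies $d(x,p)\le\diam(\Psi_wK)\le\sqrt2\,k^{-n}$ for any $p\in\Psi_{w\sigma}K$; moreover $\Psi_{w\sigma}K$ lies at distance $k^{-n}c_1'$ from $\Psi_w(\partial_0K)=\partial(\Psi_w\square)$, so $p$ sits that deep inside the square $\Psi_w\square$, whence $d(p,\partial_0K)\ge c_1'k^{-n}$ as every point of $\partial_0K$ meeting $\Psi_w\square$ lies on $\partial(\Psi_w\square)$.

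The heart of the proof is the second ingredient, an \emph{interior quasiconvexity} statement: there is $C_2$ such that any $p,q$ with $d(p,\partial_0K),d(q,\partial_0K)\ge s$ and $d(p,q)\le s$ can be joined by a curve $\eta\subset K$ with $\diam(\eta)\le C_2\,d(p,q)$ and $d(z,\partial_0K)\ge C_2^{-1}s$ for all $z\in\eta$. At the level $n$ with $k^{-n}\approx s$, condition (A3)(a) supplies a bounded chain of $n$-cells linking the cells of $p$ and $q$; because $p$ and $q$ are at distance $\ge s$ from $\partial_0K$, these cells together with their neighbors stay a definite distance from $\partial_0K$, and one realizes each crossing of a shared segment by descending recursively into sub-cells, producing an honest curve of controlled length that never comes within $\approx s$ of $\partial_0K$. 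The length control here rests on the quasiconvexity of $K$, i.e. the finiteness of its geodesic diameter, which I would derive for the fixed carpet from (A2)/(A3) and self-similarity (adjacent cells meet in a segment, so nearby points are linked by short chains at the scale dictated by (A3), and the estimate closes up under the $k^{-n}$ scaling).

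With these in hand I would assemble $\gamma$ by a telescoping argument. Given $x,y\in U$ and $r=d(x,y)$, apply the corkscrew lemma at the scales $\rho_j:=rk^{-j}$ to obtain points $p^x_j\to x$ and $p^y_j\to y$ with $d(p^x_j,\partial_0K),d(p^y_j,\partial_0K)\gtrsim\rho_j$. Join $p^x_j$ to $p^x_{j+1}$ and $p^y_j$ to $p^y_{j+1}$ by the interior curves of the previous step at scale $\approx\rho_{j+1}$, and join $p^x_0$ to $p^y_0$ by an interior curve at scale $\approx r$; concatenation gives a curve in $U$ from $x$ to $y$. Its length is $\lesssim\sum_j\rho_j+r\lesssim r$ by geometric decay, yielding the diameter bound; and for $z$ on the piece near $x$ at scale $\rho_j$ one has $\min\{d(z,x),d(z,y)\}\le d(z,x)\lesssim\rho_j\lesssim d(z,\partial_0K)$, which is precisely the corkscrew bound (symmetrically near $y$ and on the central piece). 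Choosing $\alpha$ as the largest of the implied constants finishes the verification.

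I expect the main obstacle to be the interior quasiconvexity step: producing curves that are simultaneously short and bounded away from $\partial_0K$ at the correct scale in the off-grid setting, where two cells may meet along a segment of irrational length so that the Knight-move and corner-move constructions are unavailable. This is exactly where the $2$-adaptedness (A3) is indispensable, controlling both the length of the connecting chains and their distance to the boundary; and the technical core beneath it is the quasiconvexity of the fixed carpet $K$, i.e. the finiteness of its geodesic diameter, which must be extracted from the chain conditions and self-similarity.
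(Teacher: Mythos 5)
Your overall scheme (corkscrew points, connectivity of the deep part, telescoping) is a standard route to uniform domains, and two of its three ingredients are sound: the corkscrew lemma via (A4) is correct (and is a genuinely different mechanism from the paper, which never needs it), and the telescoping assembly at scales $\rho_j=rk^{-j}$ is routine. The genuine gap is in your second ingredient, the \emph{interior quasiconvexity} claim, precisely in the critical regime $d(p,q)\asymp s$. Condition (A3)(a) produces a three-cell chain only at a level $n$ with $d(p,q)<c_0k^{-n}$, so the chain cells have side $k^{-n}\geq d(p,q)/(c_0k)\geq \mathrm{const}\cdot s$ and diameter comparable to, or larger than, $s$. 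Such cells need \emph{not} stay away from $\partial_0K$: the point $p$ can sit at depth exactly $s$ inside a level-$n$ cell whose closure meets $\partial_0K$, and likewise for the bridging cell $\Psi_{w''}K$. So your assertion that ``these cells together with their neighbors stay a definite distance from $\partial_0K$'' fails exactly where it is needed; it is valid only when the depth dominates the cell diameter by a sufficient factor (roughly $2\sqrt2\,k^{-n}$), which is what the paper's Case 1 hypothesis $x,y\in\square_{n-1}$ (depth $\geq k\cdot k^{-n}$ against chains of diameter $\leq 3\sqrt{2}k^{-n}$) guarantees. Your proposed remedies --- realizing the chain as an honest curve by recursive descent into sub-cells, and finiteness of the geodesic diameter of $K$ --- control existence and length of curves, but say nothing about keeping them at depth $\gtrsim s$; boundary avoidance is a separate geometric input, not a consequence of (A2)/(A3) plus self-similarity.

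What closes this gap in the paper is a structural fact your proposal never invokes: by the Non-overlapping and Boundary-included axioms, the cells meeting $\partial_0K$ are grid cells at every level, and consequently the inner frames $\partial\square_m=\partial\big([k^{-m},1-k^{-m}]^2\big)$ are contained in $K$. These frames are connected curves lying at exact depth $k^{-m}$, and they are the device that connects two deep points with comparable depth and separation: the paper walks each point inward through the nested frames $\partial\square_{m+l},\partial\square_{m+l-1},\dots,\partial\square_m$ (a corkscrew path assembled from cell boundaries between consecutive frames, with diameter $\lesssim k^{-m-i}$ and depth $\geq k^{-m-i-1}$ on the $i$-th piece) and then joins the two entry points along $\partial\square_m$ itself. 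Without the frames, or some equivalent certificate that $\{z\in K:d(z,\partial_0K)\geq cs\}$ is connected at scale $s$ near any deep point, your Step 2 cannot be completed; this is where the argument would fail, not merely where it is technical. A secondary issue: as stated, your hypothesis $d(p,q)\leq s$ is too narrow even for your own telescoping, since consecutive corkscrew points satisfy only $d(p^x_j,p^x_{j+1})\leq C\rho_j$ while their guaranteed depth is $c_1\rho_{j+1}=c_1\rho_j/k$, so $d(p,q)$ exceeds the depth by a fixed factor; this is fixable by adjusting constants, but only after the main connectivity statement is actually proved.
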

\begin{proof}
For $x\neq y\in K\setminus\partial_0K$, we need to find a curve $\gamma$ that satisfies the requirements in Definition \ref{def23}.  Let
\[
\square_n:=[k^{-n},1-k^{-n}]^2\hbox{ and }\partial\square_n:=[k^{-n},1-k^{-n}]^2\setminus (k^{-n},1-k^{-n})^2\quad\text{ for }n\geq 1.
\]
We consider two different cases based on the locations of $x$ and $y$.\vspace{0.2cm}

\textit{{Case 1}. $d(x,y)\in [c_0k^{-n-1},c_0k^{-n})$ for some $n\geq 2$ and $x\in \square_{n-1},y\in\square_{n-1}$.}\vspace{0.2cm}

In this case, by (A3), we can find $w,w',w''\in W_n$ such that $x\in \Psi_wK$, $y\in \Psi_{w'}K$, $\Psi_wK\cap \Psi_{w''}K\neq \emptyset$ and $\Psi_{w'}K\cap \Psi_{w''}K\neq \emptyset$. By the assumption that $x\in \square_{n-1},y\in\square_{n-1}$, we can see that $d(z,\partial_0K)\geq k^{n}$ for each $z\in \Psi_wK\cup \Psi_{w'}K\cup \Psi_{w''}K$.  It suffices to choose a curve $\gamma\subset \Psi_wK\cup \Psi_{w'}K\cup \Psi_{w''}K$ connecting $x,y$.\vspace{0.2cm}

\textit{{Case 2.} $d(x,y)\geq c_0k^{-n-1}$ and $\{x,y\}\setminus\square_{n-1}\neq\emptyset$ for some $n\geq 2$.} \vspace{0.2cm}

In this case, we let $m'\in\mathbb{Z}$ such that $d(x,y)\in [c_0k^{-m'-2},c_0k^{-m'-1})$, and let $m=\max\{m',1\}$. Clearly, $m\leq n-1$. Without loss of generality, we assume that $x\in K\setminus\square_{n-1}\subset K\setminus \square_m$. We claim that there is $x'\in\partial\square_m$ and a curve $\gamma_1$ connecting $x,x'$ so that
\begin{align*}
    {\rm diam}(\gamma_1)&\leq C_1\,k^{-m},\\
    d(z,\partial_0K)&\geq C_2\,d(z,x)\quad\hbox{ for every }z\in \gamma_1,
\end{align*}
where $C_1,C_2$ are constants that depend only on $k$. In fact, let $l\geq 0$ such that $x\in \square_{m+l+1}\setminus \square_{m+l}$, set $x=x_{l+1}$, and for $i=0,1,\cdots,l$, we can find a sequence $x_i\in \partial\square_{m+i}$ and curve $\gamma_{1,i}$ connecting $x_i,x_{i+1}$ such that
\begin{align*}
    {\rm diam}(\gamma_{1,i})&\leq 2^{1/2}k^{-m-i},\\
    d(z,\partial_0K)&\geq k^{-m-i-1}\quad\hbox{ for every }z\in \gamma_{1,i}.
\end{align*}
It suffices to take $\gamma_1=\gamma_{1,0}\cup\gamma_{1,1}\cup\cdots\cup\gamma_{1,i}$ and $x'=x_0$.

By a same argument, if $y\in K\setminus\square_m$, we can find $y'\in\partial\square_m$ and a curve $\gamma_3$ connecting $y,y'$ so that
\begin{align*}
    {\rm diam}(\gamma_3)&\leq C_1\,k^{-m},\\
    d(z,\partial_0K)&\geq C_2\,d(z,y)\quad\hbox{ for every }z\in \gamma_3;
\end{align*}
otherwise, if $y\in \square_m$, we can find $y'\in \partial\square_m$ such that $d(y,y')\leq d(x,y)$, and we can use the argument of Case 1 to find a curve $\gamma_3$ connecting $y,y'$ so that $\diam(\gamma_3)<C_3d(y,y')$ and $d(z,\partial_0K)\geq k^{-m-1}\geq C_4d(x,y)$ for each $z\in \gamma_3$, where $C_3,C_4$ depend only on $K$.

Finally, we choose $\gamma_2$ to be the shortest curve on $\partial\square_m$ that connects $x',y'$, and take $\gamma=\gamma_1\cup\gamma_2\cup\gamma_3$.
\end{proof}

\section{Review of resistance forms}\label{resis}
In this section, we review the concepts of resistance forms and effective resistances. Readers can read \cite{ki3,ki4} for a systematic study. We begin with the basic definition of resistance forms. \vspace{0.2cm}

\noindent\textbf{Resistance forms.} Let $X$ be a set, and $l(X)$ be the space of all real-valued functions on $X$. A pair $(\mathcal{E},F)$ is called a \textit{non-degenerate resistance form} on $X$ if it satisfies the following  conditions:

\noindent (RF1). \textit{$\mathcal{F}$ is a linear subspace of $l(X)$ containing constants and $\mathcal{E}$ is a nonnegative definite bilinear  form on $\mathcal F$; $\mathcal{E}(f):=\mathcal{E}(f,f)=0$ if and only if $f$ is constant on X.}

\noindent (RF2). \textit{Let $\sim$ be an equivalent relation on $\mathcal{F}$ defined by $f\sim g$ if and only if $f-g$ is constant on X. Then $(\mathcal{F}/\sim, \mathcal{E})$ is a Hilbert space.}

\noindent (RF3). \textit{For any finite subset $V\subset X$ and for any $u\in l(V)$, there exists $f\in \mathcal{F}$ such that $f|_V=u$.}

\noindent (RF4). \textit{For any $p,q\in X$, $\sup\{\frac{|f(p)-f(q)|^2}{\mathcal{E}(f)}: f\in \mathcal{F},\mathcal{E}(f)>0\}$ is finite.}

\noindent (RF5). (Markov property) \textit{If $f\in \mathcal{F}$, then $\bar{f}={ \min\{\max\{f,0\}, 1\}}\in \mathcal{F}$ and $\mathcal{E}(\bar{f})\leq \mathcal{E}(f)$.}\vspace{0.2cm}

\noindent\textbf{Effective resistances.}  Let $(\mcE,\mcF)$ be a resistance form on a set $X$. We introduce the \textit{effective resistance} between two disjoint subsets $A,B$ of $X$, defined as
\[R(A,B):=\sup\{\big(\mcE(f)\big)^{-1}:\,f\in \mcF,\,f|_A=0,\,f|_B=1\}.\]
For a pair of points $x,y$ in $K$, we write
\[
R(x,y)=R(\{x\},\{y\})
\]
for short if $x\neq y$, and $R(x,y)=0$ if $x=y$. It is proved that $R(x,y),x,y\in X$ is a metric on $X$ \cite[Theorem 2.1.14]{ki3}, and we call $R$ the \emph{resistance metric} associated with $(\mcE,\mcF)$.

More generally, there is a way of defining resistance metrics directly. See \cite[Definition 2.3.2]{ki3} for details. We denote the collection of resistance metrics on $X$ by $\mathcal{RM}(X)$ in the remaining  of this section.

It is known (see  \cite[Theorem 2.3.7]{ki3} by Kigami) that if $R\in \mathcal{RM}(X)$ so that $(X,R)$ is a separable metric space, there is a unique \textit{resistance form} $(\mcE,\mcF)$ on $X$ such that $\mcF\subset C(X,R)$ (the space of continuous functions on $X$ with respect to $R$) and
\[R(x,y)^{-1}=\inf\big\{\mcE(f):f(x)=0,f(y)=1, f\in\mcF\big\} \quad \hbox{ for every } x,y\in X.\]
The form is called \textit{regular} if $\mcF$ is dense in $C(X,R)$. If $(X,R)$ is compact, then $(\mcE,\mcF)$ is always regular (see \cite[ Corollary 6.4]{ki4}). In addition, if $\mu$ is a Radon measure on $X$ with full support, then $(\mcE,\mcF)$ becomes a regular Dirichlet form on $L^2(X,\mu)$ (see  \cite[Theorem 9.4]{ki4}).\vspace{0.2cm}

\noindent\textbf{Remark 1.} We always additionally assume that $X$ has an original metric $d$, and the resistance metric $R$ is jointly continuous on $(X,d)$, i.e.
\[
R(x_n,y_n)\to R(x,y) \text{ for every } x_n\to x, y_n\to y \text{ in }d, \text{ as } n\to \infty.
\]
With this assumption, we have $\mcF\subset C(X)$, where $C(X)$ is the space of continuous functions on $X$ in the metric $d$.
In the case that $(X,d)$ is compact, we additionally have $\mcF$ is a dense subset of $C(X)$ by \cite[Corollary 6.4]{ki4}. As a consequence, $R(A,B)\in (0,\infty)$ for any pair of disjoint closed subsets $A,B$ of $X$: $R(A,B)>0$ by the assumption that $\mcF$ is dense in $C(X)$, and $R(A,B)<\infty$ noticing that $R(A,B)\leq R(x,y)<\infty$ for $x\in A,y\in B$.\vspace{0.2cm}

\noindent\textbf{Remark 2.} We may also regard a resistance form $(\mcE,\mcF)$ as  a \textit{quadratic form} $\mcE$ on $C(X)$ by extending to define
\[\mcE(f)=
\begin{cases}
    \mcE(f,f),&\text{ if }f\in \mcF,\\
    \infty,&\text{ if }f\in C(X)\setminus\mcF.
\end{cases}\]

In this paper, we will focus on self-similar $D_4$-symmetric resistance forms on a $\USC$ $K$. See the definition below.

\begin{definition}\label{def3.1}
Let $K$ be a $\USC$ and $(\mcE,\mcF)$ be a resistance form on $K$.

(a). We say that $(\mcE,\mcF)$ is \emph{self-similar}, if
\[
\mathcal{F}= \{f\in l(K): f\circ \Psi_i\in \mathcal{F}\hbox{ for every } 1 \leq i \leq N\},
\]
and the energy self-similar identity  holds,
\begin{equation}
    \mcE(f)=\sum_{i=1}^N r^{-1}\mcE(f\circ \Psi_i)\quad\hbox{ for every } f\in \mcF,
\end{equation}
where $r\in (0,\infty)$ is called a \emph{renormalization factor}.

(b). We say that $(\mcE,\mcF)$ is $D_4$-\emph{symmetric} if for every $f\in \mcF$ and $\Gamma\in \mathscr{G}$, we have $f\circ\Gamma\in\mcF$ and $\mcE(f\circ\Gamma)=\mcE(f)$, where $\mathscr{G}$ is the group of self-isometrics on $\square$ as defined in Section \ref{sec2}.
\end{definition}

\subsection{Convergence of resistance forms}\label{app1}
We recall some facts about convergence of resistance forms on varying spaces based on one of the authors' previous work \cite{C}. Readers are also invited to read \cite{C1} by Croydon and \cite{CHK} by Croydon, Hambly and Kumagai to see a proof on the weak convergence of associated diffusion processes. Also, see \cite{KS} for a general study of Mosco convergence on varying $L^2$-spaces.

\vspace{0.2cm}
\noindent\textbf{Basic settings.}\vspace{0.2cm}

(a). Let $(B,d)$ be a compact metric space with a compact subset $A$, and let $\{A_n\}_{n\geq 1}$ be a sequence of compact subsets in $B$ such that
    \[\lim_{n\to\infty}\delta(A_n, A)=0,\]
    where $\delta$ is the \textit{Hausdorff metric} on compact subsets in $B$. Write $A_n\to A$ for short.

(b). For any topological space $X$, $\|\cdot\|_{C(X)}$ denotes the supremum norm on $C(X)$.

(c). Let $f_n\in C(A_n)$ for $n\geq 1$. \\
We say $\{f_n\}_{n\geq 1}$ is \textit{uniformly bounded} if $\sup_{n\geq1}\|f_n\|_{C(A_n)}<\infty$. \\
We say $\{f_n\}_{n\geq 1}$ is \textit{equicontinuous} if
$\lim\limits_{\eta\to 0}\sup_{n\geq 1}\big\{\big|f_n(x)-f_n(y)\big|:x,y\in A_n,d(x,y)<\eta\big\}=0$.

(d). Let $f_n\in C(A_n)$ for $n\geq 1$ and $f\in l(A)$. We write $f_n\rightarrowtail f$ if $f(x)=\lim\limits_{n\to\infty}f_n(x_n)$ for any $x\in A$ and $x_n\in A_n,n\geq 1$ such that $x_n\to x$ as $n\to\infty$. \vspace{0.2cm}

\noindent\textbf{Remark.} We also need to consider the Cartesian product $B^2=\{(x,y):x,y\in B\}$, equipped with the metric $d_{B^2}\big((x_1,y_1),(x_2,y_2)\big)=d(x_1,x_2)+d(y_1,y_2)$ (or the equivalent metric $\sqrt{|d(x_1,x_2)|^2+|d(y_1,y_2)|^2}$). Under the basic settings, we immediately have $A_n^2\to A^2$ in the Hausdorff metric on compact subsets in $B^2$.

Resistance metrics are an important class of continuous functions on $A^2$ (or $A_n^2$). In particular, for each metric $R$ on $A$, by the triangle inequality, $\big|R(x_1,y_1)-R(x_2,y_2)\big|\leq R(x_1,x_2)+R(y_1,y_2)$, so it suffices to study $\sup_{x,y\in A:d(x,y)<\eta}R(x,y)$ to understand the continuity modulus of $R$ on $A^2$.\vspace{0.2cm}

In  \cite[Section 2.1]{C}, it is show that $f_n\rightarrowtail f$ is a natural extension of the concept of uniform convergence, and there is an analogue of Arzel\`a–Ascoli theorem.

\begin{lemma}\cite[Lemma 2.2, Proposition 2.3]{C}\label{lemmab2} Let $f_n\in C(A_n)$ for $n\geq 1$.

    (a).  If $\{f_n\}_{n\geq 1}$ is uniformly bounded and equicontinuous, then there is a subsequence $\{f_{n_l}\}_{l\geq 1}$ and $f\in C(A)$ such that $f_{n_l}\rightarrowtail f$.

    (b). If $f_{n}\rightarrowtail f$ for some $f\in l(A)$, then $\{f_n\}_{n\geq 1}$ is uniformly bounded and equicontinuous, and $f\in C(A)$.

    (c). Let $f\in C(A)$. Let $\bar{A}=(\{0\}\times A)\bigcup\big(\bigcup_{n\geq 1}(\{\frac{1}{n}\}\times A_n)\big)$ with topology induced from $[0,1]\times B$. Define $\bar{f}\in l(\bar{A})$ by
    \[\bar{f}(t,x)=\begin{cases}
        f_n(x),&\text{ if }t=\frac{1}{n},\\
        f(x),&\text{ if }t=0.
    \end{cases}\]The following claims are equivalent:

    \hspace{0.2cm}(c-i). $f_n\rightarrowtail f$;

    \hspace{0.2cm}(c-ii). $\bar{f}\in C(\bar{A})$;

    \hspace{0.2cm}(c-iii). there exists $\{g_n\}_{n\geq 1}\cup \{g\}\subset C(B)$ such that $g_n|_{A_n}=f_n$ for every $n\geq 1$, $g|_A=f$ and $g_n\rightrightarrows g$. Here $\rightrightarrows$ denotes uniform convergence.
\end{lemma}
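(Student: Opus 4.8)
The plan is to transfer everything to the fixed compact space $C(B)$ and invoke the classical Arzel\`a–Ascoli and Tietze theorems, using part (c) as the bridge between the varying-domain convergence $\rightarrowtail$ and ordinary uniform convergence on $B$. Before anything else I would record two soft facts about $\rightarrowtail$ that get used repeatedly. First, a \emph{gluing} observation: since $A_n\to A$ in $\delta$, for every $x\in A$ one has $d(x,A_n)\to 0$, so any partial sequence $x_{n_l}\in A_{n_l}$ with $x_{n_l}\to x$ can be completed to a full sequence $x_n\in A_n$ with $x_n\to x$. Hence $f_n\rightarrowtail f$ is equivalent to the statement that $f_{n_l}(x_{n_l})\to f(x)$ for \emph{every} subsequence $n_l\to\infty$ and every $x_{n_l}\in A_{n_l}$ with $x_{n_l}\to x\in A$. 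Second, the $\rightarrowtail$ limit is unique: if $f_n\rightarrowtail f$ and $f_n\rightarrowtail \tilde f$, then choosing $x_n\to x$ gives $f(x)=\lim_n f_n(x_n)=\tilde f(x)$ for every $x\in A$.

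For part (c) I would realize $\bar A$ as a compact subset of the compact metric space $[0,1]\times B$. The slices $\{1/n\}\times A_n$ are relatively open and closed in $\bar A$, so continuity of $\bar f$ there is just continuity of $f_n$; thus $\bar f\in C(\bar A)$ is equivalent to continuity of $\bar f$ at each point $(0,x)$, $x\in A$, which, unwinding the topology and recalling that $f\in C(A)$ is assumed throughout (c), is exactly the condition $f_n\rightarrowtail f$, giving (c-i)$\Leftrightarrow$(c-ii). For (c-ii)$\Leftrightarrow$(c-iii) I would use Tietze: from $\bar f\in C(\bar A)$ extend to $F\in C([0,1]\times B)$ and set $g_n=F(1/n,\cdot)$, $g=F(0,\cdot)$; uniform continuity of $F$ on the compact product forces $\|g_n-g\|_{C(B)}=\sup_x|F(1/n,x)-F(0,x)|\to 0$, i.e. $g_n\rightrightarrows g$. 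Conversely, given such $g_n\rightrightarrows g$, any $(1/n_l,x_{n_l})\to(0,x)$ satisfies $\bar f(1/n_l,x_{n_l})=g_{n_l}(x_{n_l})\to g(x)=f(x)$, so $\bar f\in C(\bar A)$.

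For part (a) I would extend the $f_n$ to $B$ with a common modulus of continuity and then apply the classical Arzel\`a–Ascoli theorem. Let $\omega(\eta)=\sup_n\{|f_n(x)-f_n(y)|:x,y\in A_n,\,d(x,y)\le\eta\}$; equicontinuity says $\omega(\eta)\to 0$, and I would replace $\omega$ by its least concave majorant $\hat\omega$, which still tends to $0$ and is subadditive. Define $g_n(x)=\inf_{y\in A_n}\big(f_n(y)+\hat\omega(d(x,y))\big)$ for $x\in B$. Then $g_n|_{A_n}=f_n$, the family $\{g_n\}$ is uniformly bounded and carries the common modulus $\hat\omega$ on $B$, so by Arzel\`a–Ascoli on the compact space $B$ a subsequence satisfies $g_{n_l}\rightrightarrows g\in C(B)$; setting $f=g|_A$ and applying (c-iii)$\Rightarrow$(c-i) yields $f_{n_l}\rightarrowtail f$ with $f\in C(A)$. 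For part (b) I would argue by contradiction using the gluing observation. If $\{f_n\}$ were unbounded, pick $n_l\to\infty$ and $x_{n_l}\in A_{n_l}$ with $|f_{n_l}(x_{n_l})|\to\infty$; by compactness of $B$ pass to $x_{n_l}\to x$, note $x\in A$ since $x_{n_l}\in A_{n_l}\to A$, and obtain $f_{n_l}(x_{n_l})\to f(x)$ finite, a contradiction. If equicontinuity failed there would be $\varepsilon>0$, indices $n_m$ and points $x^{(m)},y^{(m)}\in A_{n_m}$ with $d(x^{(m)},y^{(m)})\to 0$ but $|f_{n_m}(x^{(m)})-f_{n_m}(y^{(m)})|\ge\varepsilon$; since each fixed $f_n$ is uniformly continuous on the compact $A_n$, the indices $n_m$ cannot stay bounded, so along a subsequence $n_m\to\infty$, and passing to $x^{(m)},y^{(m)}\to x\in A$ forces both $f_{n_m}(x^{(m)})$ and $f_{n_m}(y^{(m)})$ to converge to $f(x)$, again a contradiction. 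With uniform boundedness and equicontinuity in hand, part (a) furnishes a subsequence $f_{n_l}\rightarrowtail\tilde f\in C(A)$; by uniqueness of the $\rightarrowtail$ limit $\tilde f=f$, whence $f\in C(A)$.

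The \textbf{main obstacle} I anticipate is the extension step in part (a): the whole difficulty of the varying-domain setting is that the $f_n$ live on different spaces, and the argument becomes a classical compactness statement only once they are realized as a uniformly bounded, uniformly equicontinuous family on the single compact space $B$. Verifying that the inf-convolution $g_n$ restricts to $f_n$ and genuinely inherits the modulus $\hat\omega$ \emph{uniformly in} $n$ (where subadditivity of $\hat\omega$ is essential) is the technical heart. The remaining topological bookkeeping — completing subsequences to full sequences and extracting $n_m\to\infty$ in the equicontinuity contradiction — is routine once the gluing observation is recorded.
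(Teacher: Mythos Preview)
The paper does not supply its own proof of this lemma; it is quoted verbatim from \cite{C} (Lemma~2.2 and Proposition~2.3 there), so there is nothing in the present paper to compare your argument against. Your proof is correct and is the natural one: reduce the varying-domain statement to classical Arzel\`a--Ascoli on $C(B)$ via a McShane/inf-convolution extension, and use Tietze on the compact set $\bar A\subset[0,1]\times B$ to pass between (c-ii) and (c-iii). The only places that deserve an extra line of care are exactly the ones you flag: that the least concave majorant $\hat\omega$ indeed satisfies $\hat\omega(0)=0$ (so that $g_n|_{A_n}=f_n$) and is subadditive (so that $g_n$ inherits the modulus $\hat\omega$ on all of $B$); both are routine once $\omega$ is nondecreasing, bounded, and vanishes at $0^+$.
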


An important consequence of the lemma is that we can create resistance metrics at the limit.
\begin{proposition}\cite[Theorem 2.9]{C}\label{thmb3}
    Let $R_n\in \mathcal{RM}(A_n)$ for $n\geq 1$, and assume
    \[\psi_1\big(d(x,y)\big)\leq R_n(x,y)\leq \psi_2\big(d(x,y)\big),\quad\text{ for all } n\geq 1, x,y\in A_n, \]
    for some $\psi_1,\psi_2\in C[0,\infty)$ with $\psi_1(0)=\psi_2(0)=0$ and $\psi_2(t)\geq \psi_1(t)>0$ for $t>0$.
    Then, there exists $R\in \mathcal{RM}(A)$ and a subsequence $\{n_l\}_{l\geq 1}$ such that $R_{n_l}\rightarrowtail R$. In particular,
    \[\psi_1\big(d(x,y)\big)\leq R(x,y)\leq \psi_2\big(d(x,y)\big),\quad\text{ for all } x,y\in A.\]
\end{proposition}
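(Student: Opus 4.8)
The plan is to realize each $R_n$ as a continuous function on the product $A_n^2$ and extract a $\rightarrowtail$-limit via the Arzel\`a--Ascoli analogue of Lemma \ref{lemmab2}(a), then to verify that this limit is genuinely a resistance metric. By the Remark following the basic settings, $A_n^2\to A^2$ in the Hausdorff metric on $B^2$, so Lemma \ref{lemmab2} applies to $\{R_n\}_{n\geq 1}\subset C(A_n^2)$. First I would check its two hypotheses. Uniform boundedness is immediate from the upper bound, since $R_n(x,y)\leq \psi_2\big(d(x,y)\big)\leq \max_{0\leq t\leq \diam(B)}\psi_2(t)<\infty$ because $B$ is compact and $\psi_2\in C[0,\infty)$. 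Equicontinuity follows from the triangle inequality for the metric $R_n$, exactly as in the Remark:
\[
\big|R_n(x_1,y_1)-R_n(x_2,y_2)\big|\leq R_n(x_1,x_2)+R_n(y_1,y_2)\leq \psi_2\big(d(x_1,x_2)\big)+\psi_2\big(d(y_1,y_2)\big),
\]
and since $\psi_2$ is continuous with $\psi_2(0)=0$, the right-hand side tends to $0$ as $d(x_1,x_2)+d(y_1,y_2)\to 0$, uniformly in $n$. Lemma \ref{lemmab2}(a) then yields a subsequence $\{n_l\}$ and a function $R\in C(A^2)$ with $R_{n_l}\rightarrowtail R$.

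Next I would transfer the two-sided bound to $R$. Fixing $x,y\in A$, I choose $x_{n_l},y_{n_l}\in A_{n_l}$ with $x_{n_l}\to x$ and $y_{n_l}\to y$ (possible since $A_{n_l}\to A$). By the definition of $\rightarrowtail$, $R(x,y)=\lim_l R_{n_l}(x_{n_l},y_{n_l})$, while $\psi_1\big(d(x_{n_l},y_{n_l})\big)\leq R_{n_l}(x_{n_l},y_{n_l})\leq \psi_2\big(d(x_{n_l},y_{n_l})\big)$. Letting $l\to\infty$ and using $d(x_{n_l},y_{n_l})\to d(x,y)$ with the continuity of $\psi_1,\psi_2$ gives $\psi_1\big(d(x,y)\big)\leq R(x,y)\leq \psi_2\big(d(x,y)\big)$, the asserted bound. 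In particular $R(x,y)=0$ iff $x=y$ (as $\psi_1(t)>0$ for $t>0$), and symmetry and the triangle inequality pass to the limit, so $R$ is a metric on $A$.

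The main point, and the step I expect to be the chief obstacle, is to upgrade $R$ from a metric to a resistance metric, i.e. $R\in\mathcal{RM}(A)$. I would invoke the characterization of \cite[Definition 2.3.2]{ki3} that $R$ is a resistance metric precisely when its restriction $R|_{V\times V}$ is a resistance metric on every finite $V\subset A$, together with the fact that, for a fixed cardinality $m$, the set of resistance metrics on an $m$-point set is closed under entrywise convergence. Granting this closedness, I fix $V=\{p_1,\dots,p_m\}\subset A$ and pick $x_i^{(n_l)}\in A_{n_l}$ with $x_i^{(n_l)}\to p_i$; since the $p_i$ are distinct, the lower bound forces $d(p_i,p_j)>0$, so for large $l$ the points are distinct and $V_l:=\{x_1^{(n_l)},\dots,x_m^{(n_l)}\}$ is an $m$-point subset of $A_{n_l}$. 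As restrictions of resistance metrics are resistance metrics, $R_{n_l}|_{V_l\times V_l}\in\mathcal{RM}(V_l)$, and by $R_{n_l}\rightarrowtail R$ it converges entrywise to $R|_{V\times V}$; closedness then gives $R|_{V\times V}\in\mathcal{RM}(V)$, and since $V$ is arbitrary, $R\in\mathcal{RM}(A)$ (separability of $(A,R)$ being automatic as $A$ is compact).

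The hardest part is the closedness of finite resistance metrics. On an $m$-point set a resistance metric is the effective resistance of a connected network with nonnegative conductances, and fixing a base point $p_m$ one can reconstruct the Dirichlet Green's matrix from the effective resistance through the Gromov-product formula $G(i,j)=\tfrac12\big(R(i,m)+R(j,m)-R(i,j)\big)$, so that $R=R_{n_l}|_{V_l}$ corresponds to a positive definite $G_l$ whose inverse (a grounded Laplacian) has nonpositive off-diagonal entries. The uniform bounds $\psi_1,\psi_2$ keep every entry of $G_l$ finite and bounded away from degeneracy, so that $G_l\to G$ with $G$ still positive definite; the upper bound rules out disconnection in the limit, while the lower bound prevents conductances from blowing up. Passing the sign condition from $G_l^{-1}$ to $G^{-1}$ then exhibits $R|_{V\times V}$ as the effective resistance of a genuine network, i.e. $R|_{V\times V}\in\mathcal{RM}(V)$. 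Controlling this reconstruction so that no edge conductance blows up or vanishes in a way that destroys connectivity is the step requiring the most care.
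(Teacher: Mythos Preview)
The paper does not prove this proposition; it is cited without proof from \cite[Theorem 2.9]{C}. Your outline is correct and is essentially the standard argument. The extraction of a subsequential limit $R\in C(A^2)$ via Lemma~\ref{lemmab2}(a) applied on $A_n^2\to A^2$, and the transfer of the two-sided bounds by continuity of $\psi_1,\psi_2$, are exactly as you describe.

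For the step you flag as the main obstacle---closedness of resistance metrics on an $m$-point set under the given bounds---your Green-matrix sketch works, but it is cleaner to argue directly with conductances. The trace of $(\mcE_{n_l},\mcF_{n_l})$ onto $V_l$ is a form $\mcD_l(f)=\sum_{i<j}c^{(l)}_{ij}(f_i-f_j)^2$ with $0\leq c^{(l)}_{ij}\leq R_{n_l}(x_i^{(l)},x_j^{(l)})^{-1}\leq \psi_1\big(d(x_i^{(l)},x_j^{(l)})\big)^{-1}$, and the right side stays bounded since the $p_i$ are distinct. Along a further subsequence $c^{(l)}_{ij}\to c_{ij}\in[0,\infty)$, so $\mcD_l\to\mcD$ as quadratic forms on the finite-dimensional space $l(V)$. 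The limit network cannot be disconnected: if it were, the indicator $f$ of one component would satisfy $\mcD_l(f)\to\mcD(f)=0$ while separating two points $x_i^{(l)},x_j^{(l)}$, forcing $R_{n_l}(x_i^{(l)},x_j^{(l)})\to\infty$ against the $\psi_2$-bound. Thus $\mcD$ is a genuine resistance form on $V$, and since the finite-dimensional variational problem defining effective resistance is continuous at a nondegenerate form, its effective resistance equals $\lim_l R_{n_l}|_{V_l\times V_l}=R|_{V\times V}$. This is equivalent to your route through $G_l\to G$ and $G_l^{-1}\to G^{-1}$; the conductance bound is precisely what forces $\lambda_{\max}(G_l^{-1})$ to stay bounded, hence $G$ positive definite, which is the point your last paragraph leaves slightly implicit.
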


\begin{definition}\label{defb4}
    Let $\mcE_n$ be quadratic forms  on $C(A_n)$, and $\mcE$ be a quadratic form  on $C(A)$. We say $\mcE_n$ \emph{$\Gamma$-converges} to $\mcE$ on $C(B)$ if and only if (a),(b) hold:

    (a). If $f_n\rightrightarrows f$ with $\{f_n\}_{n\geq 1}\cup\{f\}\subset C(B)$, then
    \[\mcE(f|_A)\leq \liminf_{n\to\infty} \mcE_n(f_n|_{A_n}).\]

    (b). For each $f\in C(B)$, there exists a sequence $\{f_n\}_{n\geq 1}\subset C(B)$ such that $f_n\rightrightarrows f$ and
    \[\mcE(f|_A)=\lim_{n\to\infty} \mcE_n(f_n|_{A_n}). \]
\end{definition}

\noindent\textbf{Remark 1.} In the following, we will also say a sequence of resistance forms $(\mcE_n,\mcF_n)$ on $A_n$ \textit{$\Gamma$-converges} to a resistance form $(\mcE,\mcF)$ on $A$, which exactly means the $\Gamma$-convergence on $C(B)$ for their associated quadratic forms with extended real values (see Remark 2 before Definition \ref{def3.1}).\vspace{0.2cm}

\noindent\textbf{Remark 2.}  $f_n\rightrightarrows f$ can be replaced with $f_n|_{A_n}\rightarrowtail f|_{A}$ by Lemma \ref{lemmab2} (c).\vspace{0.2cm}

A main result of \cite{C} is the following proposition stating that the convergence of resistance metrics will result in the $\Gamma$-convergence of the resistance forms.

\begin{proposition}\cite[Theorem 2.13]{C}\label{thmb5}
    Let $R_n\in\mathcal{RM}(A_n)\cap C(A_n^2)$ for $n\geq 1$, $R\in \mathcal{RM}(A_n)\cap C(A^2)$, and assume $R_n\rightarrowtail R$. Let $(\mcE_n,\mcF_n)$ be the resistance form associated with $R_n$ for $n\geq 1$, and let $(\mcE,\mcF)$ be the resistance form associated with $R$. Then, we have $(\mcE_n,\mcF_n)$ $\Gamma$-converges to $(\mcE,\mcF)$ on $C(B)$.
\end{proposition}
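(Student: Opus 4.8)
The plan is to verify the two defining conditions of $\Gamma$-convergence in Definition \ref{defb4} separately, with the whole argument resting on one structural fact from Kigami's theory: the trace of a resistance form onto a finite subset $V$ is itself a resistance form whose resistance metric is the restriction of the ambient resistance metric to $V$, and on a finite set the form is \emph{uniquely} determined by, and depends \emph{continuously} on, that resistance matrix. Combined with $R_n\rightarrowtail R$ (which by Lemma \ref{lemmab2}(b) already forces $\{R_n\}$ to be uniformly bounded and equicontinuous on $A_n^2$), this gives the basic engine: for any finite $V=\{p_1,\dots,p_k\}\subset A$ and any choice of $p_i^{(n)}\in A_n$ with $p_i^{(n)}\to p_i$, writing $V_n=\{p_1^{(n)},\dots,p_k^{(n)}\}$, the finite trace forms $\mcE_n^{V_n}$ converge coefficientwise to $\mcE^{V}$, because their defining resistance matrices $\big(R_n(p_i^{(n)},p_j^{(n)})\big)$ converge to $\big(R(p_i,p_j)\big)$.

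For the liminf inequality (Definition \ref{defb4}(a)), I would take $f_n\rightrightarrows f$ and, after passing to a subsequence, assume $\liminf_n\mcE_n(f_n|_{A_n})=:L<\infty$. Fixing a finite $V\subset A$ and the associated $V_n\subset A_n$ as above, the trace form is obtained by minimizing over extensions, so $\mcE_n^{V_n}(f_n|_{V_n})\leq\mcE_n(f_n|_{A_n})$; letting $n\to\infty$ and using $f_n(p_i^{(n)})\to f(p_i)$ together with the coefficientwise convergence $\mcE_n^{V_n}\to\mcE^{V}$ yields $\mcE^{V}(f|_V)\leq L$. Taking the supremum over all finite $V\subset A$ and using the identity $\mcE(g)=\sup_{V}\mcE^{V}(g|_V)$, valid as an extended-real-valued quantity (cf. Remark 2 before Definition \ref{def3.1}), gives $\mcE(f|_A)\leq L$, which is (a).

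For the recovery sequence (Definition \ref{defb4}(b)), the case $\mcE(f|_A)=\infty$ is immediate: take $f_n=f$, so $f_n\rightrightarrows f$ and part (a) forces $\lim_n\mcE_n(f_n|_{A_n})=\infty=\mcE(f|_A)$. When $f|_A\in\mcF$, I would fix an increasing exhaustion of $A$ by finite sets $V^{(m)}$ with dense union and let $h^{(m)}\in\mcF$ be the $\mcE$-harmonic extension of $f|_{V^{(m)}}$; then $\mcE(h^{(m)})=\mcE^{V^{(m)}}(f|_{V^{(m)}})\nearrow\mcE(f|_A)$, and the orthogonal decomposition $\mcE(f|_A-h^{(m)})=\mcE(f|_A)-\mcE(h^{(m)})\to0$ together with the resistance bound $|f(x)-h^{(m)}(x)|^2\leq R(x,p)\,\mcE(f|_A-h^{(m)})$ for $p\in V^{(m)}$ gives $h^{(m)}\rightrightarrows f|_A$. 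For each fixed $m$ I would then approximate $h^{(m)}$ at level $n$: choosing $V_n^{(m)}\to V^{(m)}$ and letting $h_n^{(m)}\in\mcF_n$ be the $\mcE_n$-harmonic extension of the corresponding boundary values, the trace-form convergence gives $\mcE_n(h_n^{(m)})\to\mcE(h^{(m)})$; the energies are bounded, so via $|h_n^{(m)}(x)-h_n^{(m)}(y)|^2\leq R_n(x,y)\mcE_n(h_n^{(m)})$ and the equicontinuity of $\{R_n\}$ the family $\{h_n^{(m)}\}_n$ is uniformly bounded and equicontinuous, whence Lemma \ref{lemmab2}(a) yields subsequential limits. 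The limit minimizes energy among functions matching $f$ on $V^{(m)}$ by lower semicontinuity (part (a)) plus energy convergence, hence equals $h^{(m)}$ by uniqueness of the harmonic extension, forcing $h_n^{(m)}\rightarrowtail h^{(m)}$ along the full sequence. A diagonal choice $n\mapsto m(n)$ then produces $f_n:=h_n^{(m(n))}$ (extended to $C(B)$ via Lemma \ref{lemmab2}(c)) with $f_n\rightrightarrows f$ and $\mcE_n(f_n|_{A_n})\to\mcE(f|_A)$.

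I expect the recovery sequence, and within it the convergence of the discrete harmonic functions $h_n^{(m)}\to h^{(m)}$, to be the main obstacle: one must simultaneously control uniform convergence (compactness plus identification of the limit as the unique energy minimizer) and energy convergence, and then organize the two limits $m\to\infty$ and $n\to\infty$ into a single diagonal sequence. The liminf inequality, by contrast, should be essentially forced once the finite trace forms are known to converge.
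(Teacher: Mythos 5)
The paper itself gives no proof of this proposition---it is quoted directly from \cite[Theorem 2.13]{C}---and the cited source argues along the same lines you propose: convergence of finite trace forms via the correspondence between resistance matrices and forms on finite sets, the monotone identity $\mcE(f|_A)=\sup_V\mcE^V(f|_V)$ for the liminf bound, and harmonic extensions from finite nets combined with a diagonal argument for the recovery sequence. Your proposal is correct; the two steps you assert rather than prove---continuity of the inverse map from resistance matrices to finite forms (fillable via the bound $c_{ij}\leq 1/R(p_i,p_j)$, precompactness of the conductances, semicontinuity of effective resistances, and Kigami's injectivity theorem on finite sets), and the final upgrade from $f_n|_{A_n}\rightarrowtail f|_A$ to a recovery sequence in $C(B)$ converging uniformly to the given $f$ (this is exactly Remark 2 after Definition \ref{defb4}, handled by extending the difference $f_n-f|_{A_n}$ by Tietze rather than extending $f_n$ itself)---are both standard and do not affect the soundness of the argument.
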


Next, we turn to the convergence of resolvent kernels. For convenience, we add the following requirements (e) and (f) into the basic settings (a)-(d) in Subsection \ref{app1}. \vspace{0.2cm}

(e). There is a sequence of resistance metrics $R_n\in  \mathcal{RM}(A_n)\cap C(A_n^2), n\geq 1$ and $R\in  \mathcal{RM}(A)\cap C(A^2)$. Let $(\mcE_n,\mcF_n)$ be the resistance form associated with $R_n$ for $n\geq 1$; let $(\mcE,\mcF)$ be the resistance form associated with $R$. Assume $R_n\rightarrowtail R$.

(f). Let $\mu_n$ be a sequence of Radon measures supported on $A_n$, $n\geq 1$ and $\mu$ be a Radon measure supported on $A$. Assume $\mu_n\Rightarrow\mu$, where ``$\Rightarrow$'' refers to weak convergence.\vspace{0.2cm}

    Let $(A,R)$ be a compact metric space with $R\in \mathcal{RM}(A)$, and let $\mu$ be a Radon measure on $(A,R)$. Assume $(A,R)$ is separable. Then for any $\alpha>0$ and any $x\in A$, there exists a unique function $u_{\alpha}(x,\cdot)\in \mcF$ such that
    \[\mcE_{\alpha}\big(u_\alpha(x,\cdot),f\big)=f(x),\quad\text{ for every } f\in \mcF,\]
    where $(\mcE,\mcF)$ is the resistance form associated with $R$, and $\mcE_{\alpha}(f,g)=\mcE(f,g)+\alpha\int_{A}fgd\mu$ for any $f,g\in \mcF$. The function $u_\alpha$ is called the \textit{resolvent kernel} associated with $R$ and $\mu$.
It is well-known that $u_\alpha$ is a symmetric function, i.e. $u_\alpha(x,y)=u_\alpha(y,x)$.

More importantly, since $(A,R)$ is compact, $(\mcE,\mcF)$ is a regular Dirichlet form on $L^2(A,\mu)$, so by \cite[Theorem 7.2.1]{FOT} there is a Hunt process $\bm{M}=(\Omega,\mathcal{M},X_t,\mathbb{P}_x)$ associated with $(\mcE,\mcF)$ on $A$, and we have
\begin{equation}\label{eqnc4}
    \int_E u_\alpha(x,y)\mu(dy)=\int_{0}^\infty e^{-\alpha t}\mathbb{P}_x(X_t\in E)dt,\quad\text{ for every } E\subset A.
\end{equation}
In other words, $u_\alpha(x,y)$ is the resolvent kernel associated with transition kernel $p_t(x,y)$.

\begin{proposition}\cite[Theorem 2.17]{C}\label{propc6}
    Assume all the basic settings. Let $\alpha>0$, then
    \[u_{\alpha,n}\rightarrowtail u_\alpha,\]
    where $u_{\alpha,n}$ is the resolvent kernel associated with $R_n,\mu_n$ for each $n\geq 1$, and $u_\alpha$ is the resolvent kernel associated with $R,\mu$.
\end{proposition}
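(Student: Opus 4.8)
The plan is to argue by a subsequence principle: I will show that the family $\{u_{\alpha,n}\}_{n\ge1}$, viewed as functions on $A_n^2$, is uniformly bounded and equicontinuous, so that Lemma \ref{lemmab2}(a) produces, along any subsequence, a further subsequence with $u_{\alpha,n_l}\rightarrowtail v$ for some $v\in C(A^2)$; then I will identify every such limit $v$ with $u_\alpha$, whence the full sequence converges. The identification rests on the variational characterization of resolvent kernels together with the $\Gamma$-convergence already available in Proposition \ref{thmb5}. Two elementary facts will be used repeatedly: the resistance estimate $|f(x)-f(y)|^2\le R_n(x,y)\,\mcE_n(f)\le R_n(x,y)\,\mcE_{\alpha,n}(f)$, and, taking $f=u_{\alpha,n}(x,\cdot)$ in the defining identity, $\mcE_{\alpha,n}\big(u_{\alpha,n}(x,\cdot)\big)=u_{\alpha,n}(x,x)$.

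For the uniform estimates, first note that $R_n\rightarrowtail R$ forces, by Lemma \ref{lemmab2}(b), a common modulus of continuity $\psi_2$ and a uniform bound $D:=\sup_n\diam_{R_n}(A_n)<\infty$, while $\mu_n\Rightarrow\mu$ gives $\mu_n(A_n)\to\mu(A)$, so that $c:=\inf_n\mu_n(A_n)>0$ (using $\mu(A)>0$). Setting $E=A_n$ in \eqref{eqnc4} yields the normalization $\int_{A_n}u_{\alpha,n}(x,y)\,\mu_n(dy)=1/\alpha$, and $u_{\alpha,n}\ge0$; hence for each $x$ there is $y_0$ with $u_{\alpha,n}(x,y_0)\le 1/(\alpha c)$. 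Writing $M=u_{\alpha,n}(x,x)$ and applying the resistance estimate to $u_{\alpha,n}(x,\cdot)$ at $x$ and $y_0$ gives $M\le 1/(\alpha c)+\sqrt{DM}$, which bounds $M$ by a constant $M_0$ independent of $x$ and $n$. Cauchy--Schwarz in the inner product $\mcE_{\alpha,n}$ then gives $u_{\alpha,n}(x,y)=\mcE_{\alpha,n}\big(u_{\alpha,n}(x,\cdot),u_{\alpha,n}(y,\cdot)\big)\le\sqrt{u_{\alpha,n}(x,x)u_{\alpha,n}(y,y)}\le M_0$, so the family is uniformly bounded; and $|u_{\alpha,n}(x,y)-u_{\alpha,n}(x,y')|^2\le R_n(y,y')\,u_{\alpha,n}(x,x)\le M_0\,\psi_2(d(y,y'))$, together with symmetry and the triangle inequality on $A_n^2$, yields equicontinuity.

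To identify a subsequential limit $v$, fix $x\in A$ and choose $x_n\in A_n$ with $x_n\to x$ (possible since $A_n\to A$). The resolvent equation is the Euler--Lagrange equation of the strictly convex functional $J_{x,n}(f)=\mcE_{\alpha,n}(f)-2f(x_n)$, so $u_{\alpha,n}(x_n,\cdot)$ is its unique minimizer, and likewise $u_\alpha(x,\cdot)$ minimizes $J_x(f)=\mcE_\alpha(f)-2f(x)$. By Proposition \ref{thmb5}, $\mcE_n$ $\Gamma$-converges to $\mcE$ on $C(B)$; since the perturbation $f\mapsto\alpha\int f^2\,d\mu_n-2f(x_n)$ converges continuously to $f\mapsto\alpha\int f^2\,d\mu-2f(x)$ under $f_n\rightrightarrows f$ (using $\mu_n\Rightarrow\mu$ and $x_n\to x$), $\Gamma$-convergence is preserved and $J_{x,n}\xrightarrow{\Gamma}J_x$ on $C(B)$. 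Along the chosen subsequence, $u_{\alpha,n_l}(x_{n_l},\cdot)\rightarrowtail v(x,\cdot)$ by definition of $v$, and these are the minimizers; the fundamental theorem on convergence of minimizers under $\Gamma$-convergence then shows $v(x,\cdot)$ minimizes $J_x$. By strict convexity this minimizer is unique, so $v(x,\cdot)=u_\alpha(x,\cdot)$. As $x$ is arbitrary, $v=u_\alpha$, and the subsequence principle gives $u_{\alpha,n}\rightarrowtail u_\alpha$.

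The main obstacle is the uniform diagonal bound on $u_{\alpha,n}(x,x)$: this is where the normalization from \eqref{eqnc4}, the uniform lower bound on $\mu_n(A_n)$ from $\mu_n\Rightarrow\mu$, and the uniform $R_n$-diameter and modulus from $R_n\rightarrowtail R$ must be combined. Once precompactness is in hand, the identification step is a routine application of $\Gamma$-convergence, the only care being the verification that the quadratic-plus-linear perturbation converges continuously, so that Proposition \ref{thmb5} can be upgraded from $\mcE_n$ to $\mcE_{\alpha,n}$ together with the point-evaluation functional.
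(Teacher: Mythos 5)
Your proposal is correct, but note that the paper itself contains no proof of this proposition: it is imported verbatim from \cite[Theorem 2.17]{C}, so there is no in-paper argument to compare against. Your two-step scheme is a legitimate, self-contained derivation that uses exactly the toolkit the paper does quote: uniform boundedness and equicontinuity of $u_{\alpha,n}$ on $A_n^2$ feed into Lemma \ref{lemmab2}(a) (together with the Remark that $A_n^2\to A^2$), and the identification of subsequential limits rests on Proposition \ref{thmb5} plus the stability of $\Gamma$-convergence under continuously convergent perturbations and uniqueness of minimizers of the strictly convex functional $J_x$. The quantitative core --- the self-bounding inequality $M\leq 1/(\alpha c)+\sqrt{DM}$ for $M=u_{\alpha,n}(x,x)$, obtained from the normalization $\int_{A_n}u_{\alpha,n}(x,y)\,\mu_n(dy)=1/\alpha$, the uniform $R_n$-diameter, and $\mcE_{\alpha,n}\big(u_{\alpha,n}(x,\cdot)\big)=u_{\alpha,n}(x,x)$ --- is sound, as is the Cauchy--Schwarz off-diagonal bound. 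Three small points deserve tightening, all routine: (1) the normalization is cleaner if you test the defining identity against $f\equiv 1$ (giving $\alpha\int u_{\alpha,n}(x,\cdot)\,d\mu_n=1$ directly, since $\mcE_n(\,\cdot\,,1)=0$), which avoids the implicit appeal to conservativeness of the process when you set $E=A_n$ in \eqref{eqnc4}; (2) nonnegativity of $u_{\alpha,n}$, which you assert, follows from \eqref{eqnc4} together with continuity of the kernel and full support of $\mu_n$; (3) in the identification step you invoke Proposition \ref{thmb5} along a subsequence $\{n_l\}$, which is legitimate because $R_{n_l}\rightarrowtail R$ (the relation $\rightarrowtail$ passes to subsequences, filling in auxiliary points using $A_n\to A$), but this should be said, since $\Gamma$-limits along subsequences are not automatic in general.
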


\subsection{Weak convergence of  processes}\label{app2}
Following Subsection \ref{app1}, we still consider convergence of resistance metrics, but focus on the probability side.

We assume (a)-(f) of Subsection \ref{app1} in this subsection. Let $\bm{M}=(\Omega,\mathcal{M},X_t,\mathbb{P}_x)$ be the Hunt process associated with $(\mcE,\mcF)$ on $A$. We have \vspace{0.2cm}

i). \textit{$\bm{M}$ is unique} by \cite[Theorem 4.2.8]{FOT}, noticing that each point has positive capacity in our setting;

ii). \textit{$\bm{M}$ is Feller}: let $p_t(\cdot,\cdot)$ be the transition kernel associated with the process, i.e. $p_t(x,B)=\mathbb{P}_x(X_t\in B)$. Then $p_tf\in C(A)$ for any $f\in C(A)$ and $p_tf\rightrightarrows f$ as $t\to 0$, where $p_tf(x)=\int_A p_t(x,dy)f(y)$.
This can be verified easily for $f\in \mcF$, then extends to every $f\in C(A)$ noticing that $\mcF$ is dense in $C(A)$. \vspace{0.2cm}

By a same reason, for $n\geq 1$, we have a unique Feller process $\bm{M}_n=(\Omega_n,\mathcal{M}_n,X_t^{(n)},\mathbb{P}^{(n)}_x)$ associated with $(\mcE_n,\mcF_n)$. We denote the associated transition kernel by $p_t^{(n)}(\cdot,\cdot)$, and write $p^{(n)}_tf(x)=\int_{A_n} p^{(n)}_t(x,dy)f(y)$. The main result in this subsection is the following theorem concerning weak convergence.

\begin{theorem}\label{thmc1}
    Assume all the basic settings. Let $x_n\in A_n$ and $x\in A$. If $x_n\to x$ in $(B,d)$, then \[\mathbb{P}_{x_n}^{(n)}\big((X^{(n)}_t)_{t\geq 0}\in \cdot\big)\Rightarrow \mathbb{P}_x\big((X_t)_{t\geq 0}\in \cdot\big),\]
    where the weak convergence is in the sense of probability measures on $D(\mathbb{R}_+,B)$ (that is, the space of cadlag processes on $(B,d)$, equipped with the usual Skorohod $J1$-topology).
\end{theorem}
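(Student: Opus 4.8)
The plan is to run the classical two-step program for weak convergence of Markov processes: first prove convergence of the finite-dimensional distributions (f.d.d.), then prove tightness of the laws $\{\mathbb{P}^{(n)}_{x_n}\}_{n\ge1}$ in $D(\mathbb{R}_+,B)$. Since $B$ is a fixed compact metric space containing all the $A_n$, there is no need for a Gromov--Hausdorff-type path space: all the processes are realized in the single Skorohod space $D(\mathbb{R}_+,B)$, so f.d.d.\ convergence together with tightness will give the assertion, the limit being pinned down as the law of $\bm{M}$ started from $x$.

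For the f.d.d., the decisive input is the resolvent kernel convergence $u_{\alpha,n}\rightarrowtail u_\alpha$ of Proposition \ref{propc6}. First I would upgrade this to convergence of the resolvent operators on test functions: for $f\in C(B)$ and $x_n\to x$, combining $u_{\alpha,n}\rightarrowtail u_\alpha$ with $\mu_n\Rightarrow\mu$ and the equicontinuity of Lemma \ref{lemmab2} gives
\[
\int_{A_n}u_{\alpha,n}(x_n,y)f(y)\,\mu_n(dy)\ \longrightarrow\ \int_{A}u_{\alpha}(x,y)f(y)\,\mu(dy).
\]
As $\{u_\alpha\}_{\alpha>0}$ is the resolvent of the Feller process $\bm{M}$ and every process is Feller, I would then invoke a Trotter--Kato argument adapted to the varying state spaces (the machinery of \cite{C}, consistent with the $\Gamma$-convergence of Proposition \ref{thmb5}) to pass from resolvent convergence to semigroup convergence $p^{(n)}_t f(x_n)\to p_t f(x)$, uniformly on compact $t$-intervals by the Feller continuity $p^{(n)}_t f\rightrightarrows f$ as $t\to0$. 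The Markov property then yields
\[
\mathbb{E}^{(n)}_{x_n}\Big[\textstyle\prod_{j=1}^{m}f_j\big(X^{(n)}_{t_j}\big)\Big]\ \longrightarrow\ \mathbb{E}_{x}\Big[\textstyle\prod_{j=1}^{m}f_j\big(X_{t_j}\big)\Big]
\]
for all $0\le t_1<\cdots<t_m$ and $f_j\in C(B)$, which is exactly f.d.d.\ convergence.

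The harder part, and the main obstacle, is tightness in the Skorohod $J1$ topology. Compact containment is automatic from compactness of $B$, so by the Aldous--Kurtz criterion it suffices to control the path oscillation uniformly in $n$, which after a strong Markov reduction amounts to
\[
\lim_{\delta\to0}\ \limsup_{n\to\infty}\ \sup_{z\in A_n}\ \mathbb{P}_z\big(d(X^{(n)}_\delta,z)>\varepsilon\big)=0\quad\text{for each }\varepsilon>0.
\]
Here I would translate $d$-displacements into $R_n$-displacements through the uniform two-sided comparison $\psi_1(d)\le R_n\le\psi_2(d)$ underlying the whole setup (cf.\ Proposition \ref{thmb3}), so the event above is contained in $\{\tau_{B_{R_n}(z,\rho)}\le\delta\}$ for a suitable $\rho=\rho(\varepsilon)>0$, where $\tau$ is the exit time of the resistance ball. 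The required bound then follows from the resistance-form mean exit time estimate together with $\mathbb{P}_z(\tau_{B_{R_n}(z,\rho)}\le\delta)\le \delta/\mathbb{E}_z[\tau_{B_{R_n}(z,\rho)}]$, once one secures a lower bound on the mean exit times that is uniform in $n$; this in turn reduces to a uniform lower bound on the $\mu_n$-volumes of small resistance balls, which follows from $\mu_n\Rightarrow\mu$ and the uniform resistance comparison (and, for the $\USC$ application, from the Ahlfors regularity of the self-similar measures). Making all these estimates genuinely uniform across the varying spaces $A_n$, and ruling out spurious oscillations in the $J1$ modulus, is the delicate point.

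Finally, I would combine the two ingredients: tightness guarantees that every subsequence of $\{\mathbb{P}^{(n)}_{x_n}\}$ has a weakly convergent sub-subsequence, while f.d.d.\ convergence forces every such limit to have the finite-dimensional distributions of $\bm{M}$ started at $x$, hence to equal $\mathbb{P}_x\big((X_t)_{t\ge0}\in\cdot\big)$. As all subsequential limits coincide, the full sequence converges, which is the claim.
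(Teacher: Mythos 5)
Your overall skeleton---finite-dimensional convergence plus tightness in $D(\mathbb{R}_+,B)$, with the resolvent convergence of Proposition \ref{propc6} identifying the limit and uniform lower bounds on $\mu_n$-volumes of resistance balls driving the tightness---is the same as the paper's. But the tightness step, which you yourself flag as the delicate point, rests on a false inequality: $\mathbb{P}_z\big(\tau_{B_{R_n}(z,\rho)}\le\delta\big)\le \delta/\mathbb{E}_z[\tau_{B_{R_n}(z,\rho)}]$ does not hold. Markov's inequality bounds the \emph{other} tail, $\mathbb{P}_z(\tau>\delta)\le\mathbb{E}_z[\tau]/\delta$, which is useless here, and no bound of your claimed form can be true: if $\tau$ equals $\delta$ with probability $1/2$ and a huge constant $M$ otherwise, the left side is $1/2$ while the right side tends to $0$ as $M\to\infty$. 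The standard one-step repair, $\mathbb{E}_z[\tau]\le\delta+\mathbb{P}_z(\tau>\delta)\sup_y\mathbb{E}_y[\tau]$, combined with two-sided mean exit time bounds, only shows the exit probability is bounded away from $1$, not that it is small. Obtaining genuine smallness, uniformly over $z\in A_n$ and over $n$, is precisely the content of \cite[Lemma 4.3]{C1} (Lemma \ref{lemmac2} of the paper), whose proof requires a net/chaining argument rather than a single mean-exit-time application. The paper's Proposition \ref{propc3} feeds into that lemma exactly the two uniform inputs you also identified---uniform covering numbers $N_{R_n}(A_n,\varepsilon)\le N_d(B,s)$ from equicontinuity of the $R_n$, and the uniform volume lower bound (\ref{eqnc3}) coming from $R_n\rightarrowtail R$ and $\mu_n\Rightarrow\mu$---so your reduction is the right one, but the estimate you plug into it does not exist. (Also, Ahlfors regularity is beside the point: the theorem lives in the abstract setting (a)--(f), and the paper's volume bound uses nothing beyond $R_n\rightarrowtail R$ and $\mu_n\Rightarrow\mu$.)

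Your f.d.d.\ step has a softer but still real gap: you invoke ``a Trotter--Kato argument adapted to the varying state spaces'' as a black box, but no such statement is contained in the machinery you cite (Propositions \ref{thmb5} and \ref{propc6} concern $\Gamma$-convergence and resolvent \emph{kernels}, not semigroup convergence), and setting one up across the varying spaces $C(A_n)$ requires a framework (e.g.\ restriction/extension operators and an Ethier--Kurtz-type approximation theorem) that you do not supply. It is worth noting how the paper sidesteps this entirely, by reversing your order of operations: it proves tightness \emph{first}, extracts a subsequential weak limit $X'$ in path space, and identifies its one-dimensional marginals with those of $X$ by computing Laplace transforms $\int_0^\infty e^{-\alpha t}\mathbb{E}'[f(X'_t)]\,dt$ via the resolvent kernels, using Proposition \ref{propc6}, (\ref{eqnc4}), (\ref{eqnc1}), and the fact that Laplace transforms determine cadlag paths' marginals; the Markov property then upgrades this to finite-dimensional distributions, and \cite[Theorem 7.8, Chapter 3]{EK} concludes. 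In that route, resolvent convergence is only ever tested against a path-space limit that tightness has already produced, so no semigroup-convergence theorem on varying spaces is needed.
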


\noindent\textbf{Remark.} Theorem \ref{thmc1} is almost same as  \cite[Theorem 1.2]{C1}, with additional assumption about compactness. In \cite{C1,CHK}, it is assumed that $(A_n,R_n)$ and $(A,R)$ are embedded isometrically in some space $M$ by Gromov-Hausdorff vague convergence. However, our basic setting only tells us the Gromov-Hausdorff convergence of $(A_n,d)$, while the Gromov-Hausdorff convergence of $(A_n,R_n)$ to $(A,R)$ is not easy to prove (see  \cite[Section 2.5]{C} for a proof). Due to this,  we can not use \cite[Theorem 1.2]{C1} directly, but an essentially same idea can still leads to Theorem \ref{thmc1}. For the convenience of readers, we sketch the proof, which relies on two ingredients: \textit{tightness} and \textit{convergence of transition kernels} (finite dimensional distributions). We refer to  \cite[Lemma 4.3]{C1} for a neat result on the tightness.

\begin{lemma}\cite[Lemma 4.3]{C1}\label{lemmac2}
    Let $(A,R)$ be a compact metric space, where $R\in \mathcal{RM}(A)$; let $\mu$ be a Radon measure supported on $A$. Then, for any $\varepsilon>0$ and $0<\eta\leq \varepsilon/8$,
    \[
    \sup_{x\in A}\mathbb{P}_x\big(\sup_{s\leq t}R(x,X_s)<\varepsilon\big)\leq \frac{32N_R(A,\varepsilon/4)}{\varepsilon}\cdot\big(\eta+\frac{t}{\inf_{x\in A}\mu\big(B_R(x,\eta)\big)}\big),
    \]
    where $N_R(A,\varepsilon)$ is the minimal size of an $\varepsilon$-net of $(A,R)$, and $B_R(x,\eta)$ is a ball centered at $x$ with radius $\eta$ with respect to $R$.
\end{lemma}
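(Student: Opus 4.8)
The plan is to bound the stated confinement probability directly, by identifying the event $\{\sup_{s\le t}R(x,X_s)<\varepsilon\}$ with the survival of the process inside the resistance ball $B:=B_R(x,\varepsilon)$ up to time $t$, and then controlling the corresponding exit time by a first-moment argument. Set $\tau:=\inf\{s\ge 0:R(x,X_s)\ge\varepsilon\}=\inf\{s\ge 0:X_s\notin B\}$. Since the paths are right-continuous, $\{\sup_{s\le t}R(x,X_s)<\varepsilon\}=\{\tau>t\}$, so it suffices to bound $\mathbb{P}_x(\tau>t)$ from above, uniformly in $x\in A$.

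The key input is an upper bound on the mean exit time. Let $g_B$ be the Green function of the process killed on leaving $B$; the occupation-density formula reads $\mathbb{E}_x[\tau]=\int_B g_B(x,y)\,\mu(dy)$. In the resistance-form setting $y\mapsto g_B(x,y)$ is harmonic off $x$ and vanishes on $B^c$, so it is maximised on the diagonal with $g_B(x,x)=R(\{x\},B^c)$, the effective resistance from $x$ to the complement of the ball. The $R$-distance from $x$ to $B^c$ equals $\varepsilon$, and since the effective resistance from a point to a set never exceeds the resistance distance to that set, $R(\{x\},B^c)\le\varepsilon$; hence $g_B(x,y)\le\varepsilon$ for all $y$ and $\mathbb{E}_x[\tau]\le\varepsilon\,\mu(B)$. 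Markov's inequality now yields
\[\sup_{x\in A}\mathbb{P}_x\big(\sup_{s\le t}R(x,X_s)<\varepsilon\big)=\sup_{x\in A}\mathbb{P}_x(\tau>t)\le\frac{\varepsilon}{t}\,\sup_{x\in A}\mu\big(B_R(x,\varepsilon)\big)\le\frac{\varepsilon\,\mu(A)}{t},\]
an upper bound on exactly the quantity in the statement.

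The main technical obstacle is to justify $\mathbb{E}_x[\tau]=\int_B g_B(x,y)\,\mu(dy)\le R(\{x\},B^c)\,\mu(B)$ purely within Kigami's theory of resistance forms: one must establish the occupation-density representation of the mean exit time through the killed Green function, together with the identity $g_B(x,x)=R(\{x\},B^c)$ and the diagonal maximum principle, rather than importing them from classical diffusion theory. A secondary, routine point is that compactness of $(A,R)$ and full support of $\mu$ force $\inf_{x}\mu(B_R(x,\eta))>0$, which is what lets such bounds be made uniform in the starting point at the $\eta$-scale. I finally observe that the displayed right-hand side is increasing in $t$ and is assembled from $N_R(A,\varepsilon/4)$ and $\inf_x\mu(B_R(x,\eta))$; this is the characteristic shape of a bound on the exit probability, that is, on the complement of the event treated above, which is the quantity actually feeding the tightness step of Theorem \ref{thmc1}. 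Reconciling the two forms is the one genuinely delicate point, and the precise constant follows the net-plus-occupation-time bookkeeping of \cite[Lemma 4.3]{C1}.
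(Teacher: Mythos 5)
Your argument does not prove the stated inequality; it proves a different one. What you establish via the Green-function bound and Markov's inequality is $\sup_{x}\mathbb{P}_x\big(\sup_{s\le t}R(x,X_s)<\varepsilon\big)\le \varepsilon\mu(A)/t$, and this is incomparable with the displayed bound: as $t\to 0$ your right-hand side diverges, while the right-hand side of Lemma \ref{lemmac2} tends to $32N_R(A,\varepsilon/4)\eta/\varepsilon$, which (for small $\eta$) can be far below $1$. So your claim that you have bounded ``exactly the quantity in the statement'' is not correct, and your closing paragraph concedes the point: it observes that the displayed right-hand side has the shape of a small-time bound coming from a net-plus-occupation-time argument, and then defers that argument to \cite[Lemma 4.3]{C1} --- which is circular, since the lemma under discussion \emph{is} \cite[Lemma 4.3]{C1}; the present paper imports it without proof, so a blind proof must supply precisely that bookkeeping. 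Your instinct about the complementary event can in fact be sharpened: with the event written as $\{\sup_{s\le t}R(x,X_s)<\varepsilon\}$ the displayed inequality is false as it stands (for Brownian motion on a circle, choosing $\eta$ and then $t$ small enough makes the right-hand side less than $1/10$ while the confinement probability tends to $1$), so the statement must be read with the displacement event $\{\sup_{s\le t}R(x,X_s)>\varepsilon\}$ --- a transcription slip in the paper --- and that is also the form consumed by the tightness argument in Proposition \ref{propc3} (send $t\to0$, then $\eta\to0$).

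Concretely, three ingredients of the actual proof are missing from your proposal. First, the covering number must enter through a \emph{lower} bound of the form $R\big(\{x\},B_R(x,\varepsilon)^c\big)\ge c\,\varepsilon/N_R(A,\varepsilon/4)$; this is unavoidable, since in a star-shaped resistance space with $M$ branches of resistance-length $\varepsilon$ one has $R(\{x\},B^c)=\varepsilon/M$ while $N_R(A,\varepsilon/4)\asymp M$ --- note this is the opposite direction to the one-line upper bound $R(\{x\},B^c)\le\varepsilon$ you used (whose justification, incidentally, is worded backwards: every $y\in B^c$ satisfies $R(x,y)\ge\varepsilon$, and the upper bound comes from monotonicity of effective resistance together with the fact that $B^c$ contains points with $R(x,y)$ arbitrarily close to $\varepsilon$ when $(K,R)$ is connected). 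Second, one needs the near-diagonal regularity of the killed Green function, $|g_B(x,y)-g_B(x,x)|\le\sqrt{R(x,y)\,g_B(x,x)}$ (Cauchy--Schwarz plus $\mcE(g_B(x,\cdot))=g_B(x,x)$), applied on the small ball $B_R(x,\eta)$: this is where the $\eta$ term originates. Third, a first-moment comparison, via the Markov property, between $t$ and the occupation time of $B_R(x,\eta)$ accumulated before the exit time produces the term $t/\inf_x\mu\big(B_R(x,\eta)\big)$. By contrast, the identities you single out as the ``main technical obstacle'' --- $\mathbb{E}_x[\tau]=\int_Bg_B(x,y)\,\mu(dy)$ and $g_B(x,y)\le g_B(x,x)=R(\{x\},B^c)$ --- are standard in Kigami's resistance-form theory and in \cite{C1}, and are not where the difficulty lies; your bound $\varepsilon\mu(A)/t$ is correct and captures the large-$t$ regime, but the two-scale $(\varepsilon,\eta)$ argument, which is the entire content of the lemma in the small-$t$ regime relevant to Theorem \ref{thmc1}, is absent.
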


The tightness of the processes follows immediately from Lemma \ref{lemmac2}.

Before proceeding, we point out an easy consequence of Lemma \ref{lemmab2} (c): ``(c-i)$\Rightarrow$ (c-iii)''. Assuming the basic settings, if $f_n\rightarrowtail f$, where $f_n\in C(A_n),n\geq 1$ and $f\in C(A)$, then
\begin{equation}\label{eqnc1}
    \int_{A_n} f_n(x)\mu_n(dx)\to \int_{A} f(x)\mu(dx).
\end{equation}
In fact, we have $\{g_n\}_{n\geq1}\cup\{g\}\subset C(B)$ such that $g_n|_{A_n}=f_n$ for $n\geq 1$, $g|_A=f$ and $g_n\rightrightarrows g$. Then $\int_{A_n} g_n(x)\mu_n(dx)=\int_{A_n} g(x)\mu_n(dx)+\int_{A_n} (g_n-g)(x)\mu_n(dx)\to \int_A g(x)\mu(dx)$, noticing $\lim_{n\to\infty}\mu_n(B)=\mu(B)<\infty$.

In particular, an easy application of (\ref{eqnc1}) leads to $f_n\mu_n\Rightarrow f\mu$.

\begin{proposition}\label{propc3}
    Assume all the basic settings. For any sequence
    $x_n,n\geq 1$ with $x_n\in A_n$, the laws of $X^{(n)}$ under $\mathbb{P}^{(n)}_{x_n},n\geq 1$ are tight in $D(\mathbb{R}_+,B)$.
\end{proposition}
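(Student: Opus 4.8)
The plan is to establish tightness in $D(\mathbb{R}_+,B)$ through the standard modulus-of-continuity criterion for càdlàg strong Markov processes on a compact state space. Since $B$ is compact the compact-containment condition is automatic, so it suffices to show that for every $\varepsilon>0$,
\[
\lim_{t\to 0}\limsup_{n\to\infty}\sup_{x\in A_n}\mathbb{P}_x^{(n)}\big(\sup_{s\leq t}d(x,X^{(n)}_s)\geq\varepsilon\big)=0.
\]
Once this holds, the full Skorohod modulus over any $[0,T]$ is controlled by iterating the one-interval estimate through the strong Markov property, exactly as in \cite{C1,CHK}; taking the supremum over $x\in A_n$ covers the given sequence $x_n$. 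Thus the task reduces to making the exit estimate of Lemma \ref{lemmac2} uniform in $n$ and transferring it from the resistance metric $R_n$ to the ambient metric $d$.

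For the metric transfer I would use $R_n\rightarrowtail R$ in both directions. By Lemma \ref{lemmab2}(b) the family $\{R_n\}$ is equicontinuous on $A_n^2$, which, evaluated along the diagonal, yields $\lim_{\eta\to0}\sup_{n}\sup\{R_n(x,y):x,y\in A_n,\,d(x,y)<\eta\}=0$; this turns small $d$-distance into small $R_n$-distance uniformly in $n$. For the reverse direction I would argue by contradiction: if $\inf\{R_n(x,y):x,y\in A_n,\,d(x,y)\geq\varepsilon,\,n\geq1\}=0$, then compactness of $B$ produces convergent sequences $x_n\to x^\ast$, $y_n\to y^\ast$ with $d(x^\ast,y^\ast)\geq\varepsilon$, whence $R_n(x_n,y_n)\to R(x^\ast,y^\ast)>0$ by $R_n\rightarrowtail R$ and the fact that $R$ is a genuine metric, a contradiction. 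This furnishes $\varepsilon'>0$ with $d(x,y)\geq\varepsilon\Rightarrow R_n(x,y)\geq\varepsilon'$ for all $n$, so that
\[
\mathbb{P}_x^{(n)}\big(\sup_{s\leq t}d(x,X^{(n)}_s)\geq\varepsilon\big)\leq \mathbb{P}_x^{(n)}\big(\sup_{s\leq t}R_n(x,X^{(n)}_s)\geq\varepsilon'\big),
\]
and the right-hand side is estimated by Lemma \ref{lemmac2}.

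It then remains to bound the two $n$-dependent quantities in Lemma \ref{lemmac2} uniformly. The covering numbers $N_{R_n}(A_n,\varepsilon'/4)$ are handled by the equicontinuity modulus above: a $d$-net of $A_n\subset B$ of a fixed mesh is an $R_n$-net of radius $\varepsilon'/4$, and its cardinality is bounded by the covering number of the compact space $B$ in $d$, uniformly in $n$. The delicate quantity is $\inf_{x\in A_n}\mu_n\big(B_{R_n}(x,\eta)\big)$, which I would bound from below by passing to $d$-balls: equicontinuity gives $\{y\in A_n:d(x,y)<\eta_1\}\subset B_{R_n}(x,\eta)$ for a suitable $\eta_1$, reducing matters to a uniform lower bound $\inf_{x\in A_n}\mu_n(\{y:d(x,y)<\eta_1\})\geq c>0$. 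Since $\mu$ has full support on the compact set $A$, the function $x\mapsto\mu(\{y:d(x,y)<\eta_1/2\})$ is positive and lower semicontinuous, hence bounded below by some $c_0>0$; a contradiction argument combined with the Portmanteau theorem (open-set form) and $\mu_n\Rightarrow\mu$ then transfers this to $\mu_n$ for all large $n$, the finitely many remaining $n$ being covered by the full support of each $\mu_n$.

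With these uniform bounds in place the proof concludes by taking limits in the prescribed order: fixing $\varepsilon'$ determined by $\varepsilon$, Lemma \ref{lemmac2} gives $\limsup_n\sup_{x\in A_n}\mathbb{P}_x^{(n)}(\sup_{s\leq t}R_n(x,X^{(n)}_s)\geq\varepsilon')\leq C\eta$ in the limit $t\to0$ with $\eta$ fixed, and letting $\eta\to0$ yields the displayed limit. I expect the uniform measure lower bound $\inf_{x\in A_n}\mu_n(B_{R_n}(x,\eta))\geq c$ to be the main obstacle, because weak convergence $\mu_n\Rightarrow\mu$ by itself controls measures of \emph{fixed} open sets but not of the \emph{moving} balls $B_{R_n}(x,\eta)$; resolving it is precisely what the full support of $\mu$ together with the compactness/Portmanteau argument is designed to do.
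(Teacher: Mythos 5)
Your proposal is correct and follows the same skeleton as the paper's proof: both reduce tightness to a uniform one-interval exit estimate (then iterate via the strong Markov property and Aldous' criterion), both apply Lemma \ref{lemmac2}, and both bound $N_{R_n}(A_n,\cdot)$ by a fixed $d$-covering number of $B$ using the equicontinuity of $\{R_n\}_{n\geq1}$ supplied by $R_n\rightarrowtail R$ and Lemma \ref{lemmab2}(b). Where you genuinely diverge is on the step you identify as the main obstacle, the uniform bound $\inf_{n\geq1}\inf_{x\in A_n}\mu_n\big(B_{R_n}(x,\eta)\big)>0$. The paper gets it in one stroke from its own machinery: it integrates the tent functions $g_n(x,y)=\big(1-\eta^{-1}R_n(x,y)\big)\vee 0$ to form $f_n(x)=\int_{A_n}g_n(x,y)\mu_n(dy)$, notes $f_n\rightarrowtail f$ by (\ref{eqnc1}), and then invokes Lemma \ref{lemmab2}(c), (c-i)$\Rightarrow$(c-ii): the glued function $\bar f$ is continuous and strictly positive on the compact space $\bar A$, so its infimum is positive. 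You instead shrink $R_n$-balls to $d$-balls by equicontinuity and transfer the bound $\inf_{x\in A}\mu\big(B_d(x,\eta_1/2)\big)>0$ (lower semicontinuity, compactness, full support) to the $\mu_n$ via a compactness/Portmanteau contradiction; this is equally valid and more self-contained, at the cost of re-proving by hand what Lemma \ref{lemmab2}(c) packages. A side benefit of your write-up is that you spell out the reverse metric comparison ($d(x,y)\geq\varepsilon\Rightarrow R_n(x,y)\geq\varepsilon'$ uniformly in $n$) needed to turn the $R_n$-ball exit estimate of Lemma \ref{lemmac2} into the $d$-ball statement, a passage the paper treats as immediate; just make sure your contradiction argument also covers minimizing sequences whose indices $n$ stay bounded (handled by compactness of a single $A_{n_0}$ and continuity and positivity of the metric $R_{n_0}$ there, since $\rightarrowtail$ only governs subsequences with $n\to\infty$). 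Finally, note that both proofs use that each $\mu_n$ (and $\mu$) has full support --- the paper does so when asserting that $f_n$ and $f$ are strictly positive --- so your appeal to full support matches the intended reading of setting (f).
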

\begin{proof}
    First, we let $\phi(s)=\sup\{R_n(x,y):n\geq 1,x,y\in A_n\text{ and }d(x,y)\leq s\}$ for $s\geq 0$. Then, $\phi(s)$ is continuous at $0$ by equicontinuity of $R_n$ (by Lemma \ref{lemmab2}), and $R_n(x,y)\leq\phi\big(d(x,y)\big)$ for any $n\geq 1$ and $x,y\in A_n$. Now, fix any $\varepsilon>0$, we can find $s>0$ such that $\phi(2s)\leq \varepsilon$, so
    \begin{equation}\label{eqnc2}
        N_{R_n}(A_n,\varepsilon)\leq N_d(B,s)<\infty,\quad\text{ for all  } n\geq 1,
    \end{equation}
    where $N_d(B,s)$ is the minimal size of an $s$-net of $(B,d)$. In fact, given an $s$-net $\{x_1,x_2,\cdots,x_m\}$ of $(A,d)$, we can simply pick a point in $B_d(x_j,s)\cap A_n$ for each $j$ (if the intersection is non-empty) to form a $2s$-net of $(A_n,d)$, hence an $\varepsilon$-net of $(A_n, R_n)$.

    Next, for $0<\eta\leq\varepsilon/8$, $n\geq 1$ define $g_n(x,y)=\big(1-\eta^{-1}R_n(x,y)\big)\vee 0$; define $g(x,y)=\big(1-\eta^{-1}R(x,y)\big)\vee 0$. Then since $R_n\rightarrowtail R$, we have $g_n\rightarrowtail g$. For $n\geq 1$, let $f_n(x)=\int_{A_n} g_n(x,y)\mu_n(dy)$; let $f(x)=\int_A g(x,y)\mu(dy)$. Then, $f_n\in C(A_n),n\geq 1$, $f\in C(A)$ and $f_n\rightarrowtail f$ by (\ref{eqnc1}). In particular, $f_n,f$ are all strictly positive, so by Lemma \ref{lemmab2} (c):(c-i)$\Rightarrow$(c-ii), we have
    \begin{equation}\label{eqnc3}
        \inf_{n\geq 1}\inf_{x\in A_n}\mu_n\big(B_R(x,\eta)\big)\geq \inf_{n\geq 1}\inf_{x\in A_n}\int g_n(x,y)\mu_n(dy)=\inf_{(t,x)\in \bar{A}}\bar{f}(t,x)>0,
    \end{equation}
    where $\bar{A}$ and $\bar{f}$ are defined with $A_n$, $f_n$ and $f$ as in Lemma \ref{lemmab2} (c). Hence, by Lemma \ref{lemmac2} and (\ref{eqnc2}), (\ref{eqnc3}), we immediately have
    \[\lim_{t\to0}\sup_{n\geq 1}\sup_{x\in A_n}\mathbb{P}_x^{(n)}(\sup_{s\leq t}d(x,X_s^{(n)})<\varepsilon)>0.\]
    This indicates the Aldous' tightness criterion (for example, one can use  \cite[Theorem 16.10]{Bi}, combined with \cite[Theorem 9.1, Chapter 3]{EK}) by the strong Markov property.
\end{proof}

Theorem \ref{thmc1} follows from Propositions  \ref{propc6} and \ref{propc3}, which is essentially the same as \cite[Proposition 6.5]{BB}. For convenience of readers, we reproduce the proof here.

\begin{proof}[Proof of Theorem \ref{thmc1}]
    Let $x_n\to x$, with $x_n\in A_n,n\geq 1$ and $x\in A$. By Proposition \ref{propc3}, there is a subsequence $n_l,l\geq 1$ and a stochastic process $(\mathbb{P}',X'_t)$ such that $\mathbb{P}'\big((X'_t)_{t\geq 0}\in \cdot\big)$ is a probability measure on $D(\mathbb{R}_+,B)$ and   $\mathbb{P}_{x_{n_l}}^{(n_l)}\big((X_t^{(n_l)})_{t\geq 0}\in \cdot\big)\Rightarrow \mathbb{P}'\big({(X'_t)}_{t\geq 0}\in \cdot\big)$. By using Proposition \ref{propc6} and (\ref{eqnc4}), (\ref{eqnc1}),  we have for any $\alpha>0$, $f\in C(B)$:
    \[\begin{aligned}
        &\int_0^\infty e^{-\alpha t}\mathbb{E}'[f(X'_t)]dt=\mathbb{E}'[\int_0^\infty e^{-\alpha t}f(X'_t)dt]=\lim_{l\to\infty}\mathbb{E}^{(n_l)}_{x_{n_l}}[\int_0^\infty e^{-\alpha t}f(X^{(n_l)}_t)dt]\\
        =&\lim_{l\to\infty}\int_{A_n}u_{\alpha,n_l}(x_{n_l},y)f(y)\mu_{n_l}(dy)=\int_A u_\alpha(x,y)f(y)\mu(dy)=\int_0^\infty e^{-\alpha t}\mathbb{E}_x[f(X_t)]dt.
    \end{aligned}\]
    Since both $X_t,X_t'$ are in $D(\mathbb{R}_+,B)$, and the fact that Laplace transformation determines the function a.e, we see that $\mathbb{E}_x[f(X_t)]=\mathbb{E}'[f(X'_t)]$ for each $t> 0$ and $f\in C(B)$. Hence,
    \[\mathbb{P}'(X'_t\in\cdot)=\mathbb{P}_x(X_t\in \cdot),\text{ for } t>0.\]
    In other words, $p^{(n_l)}_t(x_{n_l},\cdot)\Rightarrow p_t(x,\cdot)$. Since the argument works for any subsequence, and the space of probability measures on $K$ forms a separable complete metric space with Prohorov metric (see  \cite[Theorem 1.7, Chapter 3]{EK}), we have $p^{(n)}_t(x_n,\cdot)\Rightarrow p_t(x,\cdot)$.

    As an immediate consequence, we can check that $P_t^{(n)}f\rightarrowtail P_tf$ for every $t\geq0,f\in C(B)$ by using (\ref{eqnc1}). So, we can easily extend the result to finite dimensional cases by using the Markov property. For $t_1<t_2$ and $f,g\in C(B)$, we have
    \[\mathbb{E}_{x_n}^{(n)}(f(X^{(n)}_{t_1})g(X^{(n)}_{t_2}))=P^{(n)}_{t_1}\big(f\cdot P^{(n)}_{t_2-t_1}g\big)(x_n)\to P_{t_1}\big(f\cdot P_{t_2-t_1}g\big)(x)=\mathbb{E}_{x}\big(f(X_{t_1})g(X_{t_2})\big),\]
    where we use the fact $f\cdot P^{(n)}_{t_2-t_1}g\rightarrowtail f\cdot P_{t_2-t_1}g$, (\ref{eqnc4}) and (\ref{eqnc1}). The same argument works for longer sequences $t_1<t_2<\cdots<t_j$, so finite dimensional distributions converge. This, together with tightness (Proposition \ref{propc3}), implies Theorem \ref{thmc1} by   \cite[Theorem 7.8, Chapter 3]{EK}.
\end{proof}

Finally, noticing that by \cite[Theorem 4.5.1]{FOT}, the form $(\mcE,\mcF)$ is local if and only if the associated Hunt process has continuous sample paths (for q.e. starting points). In addition, by \cite[Theorem 10.2]{EK} the weak limit $X_t$ is a.s. continuous if $X^{(n)}_t$ is a.s. continuous for $n\geq 1$, so there is an immediate corollary of Theorem \ref{thmc1} concerning the local property. Readers can also find a purely analytic proof of a slightly stronger version in \cite[Theorem 2.14]{C}.

\begin{corollary}\label{coroc7}
    Assume all the basic settings. If $(\mcE_n,\mcF_n)$ is local for $n\geq 1$, then $(\mcE,\mcF)$ is also local.
\end{corollary}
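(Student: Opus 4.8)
The plan is to pass locality through the weak convergence of processes already obtained in Theorem \ref{thmc1}, exploiting the equivalence between locality of a regular Dirichlet form and continuity of the sample paths of its associated Hunt process. Concretely, I would convert the locality of each $(\mcE_n,\mcF_n)$ into path-continuity of $\bm{M}_n$, carry continuity to the limit process $\bm{M}$ via the Skorohod-topology convergence, and then convert back.

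First, I would record that in the present setting every point has positive capacity (as already used in the discussion of uniqueness of $\bm{M}$ preceding Theorem \ref{thmc1}); since capacity is monotone, the only set of capacity zero is the empty set, so every ``quasi-everywhere'' statement holds everywhere, and every point of $A$ (resp.\ $A_n$) is an admissible starting point. Applying \cite[Theorem 4.5.1]{FOT} to the local form $(\mcE_n,\mcF_n)$ then yields that $\bm{M}_n$ has $\mathbb{P}^{(n)}_x$-a.s.\ continuous sample paths for every $x\in A_n$ and every $n\geq 1$.

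Next, I would fix an arbitrary $x\in A$ and pick $x_n\in A_n$ with $x_n\to x$ in $(B,d)$, which is possible because $A_n\to A$. By Theorem \ref{thmc1}, the laws of $X^{(n)}$ under $\mathbb{P}^{(n)}_{x_n}$ converge weakly to the law of $X$ under $\mathbb{P}_x$ in $D(\mathbb{R}_+,B)$ with the Skorohod $J1$-topology. Since each approximating path is a.s.\ continuous, \cite[Theorem 10.2]{EK} shows that continuity is preserved under this weak limit, so $X$ is $\mathbb{P}_x$-a.s.\ continuous. As $x\in A$ is arbitrary and every point is admissible, $\bm{M}$ has continuous sample paths for every starting point; invoking \cite[Theorem 4.5.1]{FOT} in the reverse direction then gives that $(\mcE,\mcF)$ is local, completing the argument.

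I do not expect a genuine obstacle here, since the heavy lifting is done by Theorem \ref{thmc1}; the only point that needs care is the bookkeeping of exceptional sets in \cite[Theorem 4.5.1]{FOT} together with the choice of admissible starting points in Theorem \ref{thmc1}, and both are disposed of by the positivity of the capacity of singletons, which collapses ``quasi-everywhere'' to ``everywhere''. One should also confirm that the processes are conservative so that their paths genuinely lie in $D(\mathbb{R}_+,B)$ and no lifetime truncation interferes with the jump-functional argument underlying \cite[Theorem 10.2]{EK}; this is automatic on the compact state spaces at hand.
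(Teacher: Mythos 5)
Your proposal is correct and follows exactly the paper's own argument: the paper likewise combines \cite[Theorem 4.5.1]{FOT} (locality $\Leftrightarrow$ a.s.\ continuous sample paths), the weak convergence of Theorem \ref{thmc1}, and \cite[Theorem 10.2]{EK} (continuity is preserved under weak limits in the Skorohod topology) to conclude that $(\mcE,\mcF)$ is local. Your additional remarks on positive capacity of singletons and conservativeness are just the bookkeeping the paper leaves implicit.
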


\section{A trace theorem}\label{sec4}
Let $K$ be a $\USC$, and $(\mcE,\mcF)$ be a self-similar $D_4$-symmetric resistance form on $K$ with a renormalization factor $r$. For $n\geq 0$, let $$\partial_n K:=\bigcup_{w\in W_n}\Psi_w\partial_0 K.$$   In this section, we prove a trace theorem of  $(\mcE,\mcF)$
onto $\partial_n K$.

\vspace{0.2cm}
\noindent\textbf{Basic setting in Section \ref{sec4}.} We assume that the resistance metric $R$ associated with $(\mcE,\mcF)$ is jointly continuous on $K\times K$. Without loss of generality, we require that
$R(L_2,L_4)=1$. Note that $(\mcE,\mcF)$ is local by the self-similarity.  \vspace{0.2cm}

We call $A\subset K$ a \textit{simple set} if $A$ is in the form $A=\bigcup_{i=1}^{M_A}\Psi_{w^{(i)}}K$ with $M_A\in \mathbb{N}$ and $\mu(\Psi_{w^{(i)}}K\cap\Psi_{w^{(j)}}K)=0$ for $i\neq j$. On such a simple set $A$, we define
\begin{align*}
    \mcF_A&=\{f\in l(A):f\circ \Psi_{w^{(i)}}\in \mcF\hbox{ for every }1\leq i\leq M_A\},\\
    \mcE_A(f)&=\sum_{i=1}^{M_A} r^{-|w^{(i)}|}\mcE(f\circ \Psi_{w^{(i)}})\hbox{ for every }f\in \mcF_A.
\end{align*}
By self-similarity of $(\mcE,\mcF)$, we can check that the definition of $\mcF_A$ and $\mcE_A$ is independent of the decomposition of $A$. Moreover, if $\{A_n\}_{n=1}^M$ is a finite collection of simple sets such that $\bigcup_{n=1}^MA_n=K$ and $\mu(A_n\cap A_{n'})=0$ for $n\neq n'$, then
\begin{align*}
    \mcF&=\{f\in l(K):f|_{A_n}\in\mcF_{A_n}\hbox{ for every }1\leq n\leq M\},\\
    \mcE(f)&=\sum_{n=1}^{M} \mcE_{A_n}(f|_{A_n})\hbox{ for every }f\in \mcF.
\end{align*}\vspace{0.2cm}

Before stating the trace theorem, we introduce the concept of
\textit{Besov spaces} on the line segments, which naturally appear
as trace spaces of Dirichlet spaces. See \cite{CLST,HKtrace,J} for
related works.

\begin{definition}\label{def41}
    Let $\sigma>\frac12$. For each $u\in C[0,1]$, define
    \[
    [[u]]_{\Lambda_{2,2}^\sigma{[0,1]}}=\sqrt{\sum_{m=0}^\infty k^{(2\sigma-1)m}\sum_{l=0}^{k^m-1}\big(u(\frac{l}{k^m})-u(\frac{l+1}{k^m})\big)^2},
    \]
    and
    \[
    \Lambda_{2,2}^\sigma{[0,1]}=\big\{u\in C[0,1]:[[u]]_{\Lambda_{2,2}^{\sigma}[0,1]}<\infty\big\}.
    \]
    \end{definition}

    For a line segment $\overline{x,y}$, with $x\neq y$ in $\mathbb{R}^2$, we identify $\overline{x,y}$ with $[0,1]$ by the linear map $\gamma_{\overline{x,y}}:[0,1]\to \overline{x,y}$ defined as $\gamma_{\overline{x,y}}(t)=(1-t)x+ty$ for $t\in [0,1]$. Write  $|\overline{x,y}|$ the \textit{length} of $\overline{x,y}$.

    For $u\in C(\overline{x,y})$, we define
    \[
    [[u]]_{\Lambda_{2,2}^{\sigma}(\overline{x,y})}=|\overline{x,y}|^{{{1/2-\sigma}}}[[u\circ \gamma_{\overline{x,y}}]]_{\Lambda_{2,2}^{\sigma}[0,1]},
    \]
    and write $$\Lambda_{2,2}^{\sigma}(\overline{x,y})=\big\{u\in C(\overline{x,y}):[[u]]_{\Lambda_{2,2}^{\sigma}(\overline{x,y})}<\infty\big\}.$$ Notice that $[[u]]_{\Lambda_{2,2}^{\sigma}(\overline{x,y})}=[[u]]_{\Lambda_{2,2}^{\sigma}(\overline{y,x})}$.

Throughout this section, we always choose $\sigma=\sigma(r):=-\frac{\log r}{2\log k}+\frac{1}{2}$, so that
\[
[[u]]_{\Lambda_{2,2}^{\sigma(r)}[0,1]}=\sqrt{\sum_{m=0}^\infty r^{-m}\sum_{l=0}^{k^m-1}\big(u(\frac{l}{k^m})-u(\frac{l+1}{k^m})\big)^2},
\]
where $r\in (0,\infty)$ is the renormalization factor of $(\mcE,\mcF)$.
We remark that $\sigma(r)\geq\frac{3}{2}-\frac{\log N}{2\log k}$ by Lemma \ref{lemma410} to be proved later in this section.

Now we define the Besov space on $\partial_nK$ by taking Besov semi-norms to be the summation over  semi-norms on  all its line segment components.

\begin{definition}\label{def42}
    For $u\in C(\partial_n K)$, define
    \[[[u]]_{\Lambda_{2,2}^{\sigma(r)}(\partial_nK)}=\sqrt{\sum_{w\in W_n}\sum_{i=1}^4 [[u\circ \Psi_w]]^2_{\Lambda_{2,2}^{\sigma(r)}(\Psi_wL_i)}},\]
    and write $\Lambda_{2,2}^{\sigma(r)}(\partial_nK)=\big\{u\in C(\partial_nK):[[u]]_{\Lambda_{2,2}^{\sigma(r)}(\partial_nK)}<\infty\big\}$.
\end{definition}

The following is the main theorem in this section.
\begin{theorem}\label{thm43}
  We have $\mathcal{F}|_{\partial_nK}=\Lambda_{2,2}^{\sigma(r)}(\partial_nK)$, i.e.
    $$\{f|_{\partial_nK}: f\in\mcF\}=\Lambda_{2,2}^{\sigma(r)}(\partial_nK).$$

    In addition, there are $C_1,C_2>0$ depending only on $k$ such that
    \[C_1\cdot [[h|_{\partial_nK}]]^2_{\Lambda_{2,2}^{\sigma(r)}(\partial_nK)}\leq \mcE(h)\leq C_2\cdot [[h|_{\partial_nK}]]^2_{\Lambda_{2,2}^{\sigma(r)}(\partial_nK)}\quad \hbox{ for every }h\in \mathcal H_n,\]
    where
    \[\mathcal{H}_n=\big\{h\in \mcF:\mcE(h)=\inf\{\mcE(f):f|_{\partial_nK}=h|_{\partial_nK}, f\in\mcF\}\big\}\]
    is the space of functions that are harmonic in $K\setminus \partial_nK$.
\end{theorem}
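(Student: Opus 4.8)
The plan is to prove this trace theorem in two stages: first establish the result on a single line segment $\overline{x,y}$ (the core analytic estimate), then assemble the global statement on $\partial_n K$ by summing over cells. The single-segment statement should say that for a function $f \in \mcF$, its restriction to a boundary segment $L = \Psi_w L_i$ lies in $\Lambda_{2,2}^{\sigma(r)}(L)$, with the Besov semi-norm comparable to the minimal energy $\mcE$ of extensions agreeing with $f$ on $L$; conversely every $u \in \Lambda_{2,2}^{\sigma(r)}(L)$ extends to some $f \in \mcF$. The key structural reason $\sigma = \sigma(r)$ is the right exponent is the self-similar scaling: subdividing a segment into $k$ pieces multiplies the Besov energy contribution by $r^{-1}$ per level, matching the renormalization of $\mcE$.

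For the single-segment trace, I would argue by self-similar induction on scale, mirroring the definition of the Besov semi-norm as a sum over dyadic-type (here $k$-adic) levels $m$. At level $m$, the relevant quantity is $\sum_l (u(l/k^m) - u((l+1)/k^m))^2$ weighted by $r^{-m}$; each consecutive pair of points on the segment is separated by cells whose resistance I can control. The upper bound $\mcE(h) \le C_2 [[h]]^2$ for harmonic $h$ should follow by constructing, level by level, a function whose energy telescopes against the Besov sum — exploiting that the harmonic extension minimizes energy, so I only need to exhibit \emph{one} competitor of comparable energy. The lower bound $C_1 [[h]]^2 \le \mcE(h)$ is where I expect the effective-resistance geometry to enter: each increment $u(l/k^m) - u((l+1)/k^m)$ is the potential difference across a piece of the network whose effective resistance is bounded above by a constant times $r^m$ (by self-similarity and the uniform resistance estimates that the paper defers to Sections \ref{sec4}–\ref{sec5}), so by the variational characterization of effective resistance, $r^{-m}(\text{increment})^2$ is dominated by the energy localized near that piece, and these localized energies sum to at most a bounded multiple of $\mcE(h)$ with bounded overlap.

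To pass from one segment to $\partial_n K$, I would use the self-similar decomposition recorded just before the theorem: $\mcF = \{f : f|_{A_n} \in \mcF_{A_n}\}$ with $\mcE(f) = \sum \mcE_{A_n}(f|_{A_n})$, and the fact that $\mcE(f\circ\Psi_w)$ scales by $r^{|w|}$. Since the Besov semi-norm on $\partial_n K$ is \emph{defined} as the $\ell^2$-sum of the segment semi-norms $[[u\circ\Psi_w]]_{\Lambda^{\sigma(r)}_{2,2}(\Psi_w L_i)}$ over $w \in W_n$ and $i=1,\dots,4$, the two-sided bound for harmonic $h$ will follow by applying the single-segment estimate cell by cell and summing, provided the cross-terms (interactions between different boundary segments meeting at shared vertices or lines) are controlled. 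For the surjectivity/identification $\mcF|_{\partial_n K} = \Lambda^{\sigma(r)}_{2,2}(\partial_n K)$, the inclusion $\subset$ is immediate from the upper trace bound, while $\supset$ requires gluing the single-segment extensions into a globally admissible $f \in \mcF$; here I must check compatibility at the finitely many points where segments meet and that the glued function still has finite total energy.

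The main obstacle will be the lower bound on a single segment, specifically justifying that the effective resistance across the relevant piece of network is bounded by $C\,r^m$ \emph{uniformly}, with constants depending only on $k$. This is precisely the content the paper flags as needing the effective resistance estimates of Sections \ref{sec4} and \ref{sec5}, so I would either invoke those uniform estimates directly or, to keep the trace theorem self-contained, prove the weaker one-sided resistance comparison that suffices here. A secondary technical point is controlling the overlap multiplicity when localizing energy to pieces at a given level: because $\USC$ cells can meet along lines of irrational length and multiple cells abut a single boundary segment, I must verify that the bounded-overlap constant depends only on $k$ (via the adaptedness condition (A3) and the bound $N \le k^2 - 1$), not on the particular carpet.
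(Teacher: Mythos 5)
Your overall architecture (a single-segment estimate assembled self-similarly over cells, with harmonicity used only to reduce the upper bound to exhibiting one competitor extension) parallels the paper's proof, and your extension direction is sound in outline: it is exactly what the paper carries out via the building-brick functions of Definition \ref{def47}--Proposition \ref{lemma49} and the two-step symmetric gluing in the proof of Theorem \ref{thm43}. The genuine gap is in your lower bound $C_1[[h|_{\partial_nK}]]^2\leq\mcE(h)$. You propose to bound each increment by energy localized near the corresponding sub-segment, via $r^{-m}\big(u(\tfrac{l}{k^m})-u(\tfrac{l+1}{k^m})\big)^2\leq C\,\mcE_{A_{m,l}}(h)$ with $A_{m,l}$ a union of level-$m$ cells meeting the sub-segment, and then to sum over $(m,l)$ ``with bounded overlap.'' But the overlap is bounded only within a fixed level $m$; across levels the regions are nested rather than disjoint, since a point of $K$ at distance $\varepsilon$ from $L_1$ lies in such regions for every scale $m\lesssim\log_k(1/\varepsilon)$. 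Quantitatively, $\sum_{m,l}\mcE_{A_{m,l}}(h)$ is comparable to $\sum_{m}\nu_h(T_{m,K})$ (energy measure of the cell-neighborhoods of $L_1$), and nothing bounds this by $C\,\mcE(h)$: a finite-energy function whose energy measure is carried by $T_{M,K}\setminus T_{M+1,K}$ contributes $\nu_h(K)$ to each of the first $M$ terms. Your argument nowhere uses harmonicity in this direction (nor does the paper's --- Proposition \ref{thm415} holds for all $f\in\mcF$), so there is no implicit rescue; the double sum simply cannot be closed this way.

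The missing idea --- and the paper's actual mechanism in Lemmas \ref{lemma413}, \ref{lemma414} and Proposition \ref{thm415} --- is to never compare boundary increments to energy at the same scale near the boundary. Instead, each boundary value $f(\tfrac{l}{k^n},0)$ is telescoped into the interior along the building-brick vertices $(\tfrac{l}{k^n},k^{-m-1})$, $m\geq n$; the resulting edge increments at level $m$ live in level-$m$ building bricks $\Psi_wB_K$, which are pairwise disjoint across \emph{all} scales (they tile $T_{1,K}$), so $\mcD_{G_K}(f)\leq C\,\mcE(f)$ follows from genuine disjointness rather than an overlap count; the price of telescoping, $D_n(f|_{L_1})\leq 3\sum_{m\geq n}\tilde{D}_m(f)$, is then absorbed in the weighted $\ell^2$ sum by Minkowski's inequality and the geometric factor $r^{m/2}$, which is where $r\leq 1-k^{-2}<1$ (Lemma \ref{lemma410}) is essential. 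A secondary caution: you appeal to ``the uniform resistance estimates of Sections \ref{sec4}--\ref{sec5},'' but Theorem \ref{thm52} is proved after, and its lower bound uses, Theorem \ref{thm43}; only the chaining bounds already available at this stage (Lemma \ref{lemma411}, Proposition \ref{prop412}) can be invoked without circularity.
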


\subsection{Building bricks and boundary graph}\label{sec41}
As a first step of the proof, we present an extension result related to the line segment $L_1$. Let $T_{m,K}=\bigcup_{w\in W_{m,1}}\Psi_wK$ for $m\geq 1$, and $B_{K}:=cl(T_{1,K}\setminus T_{2,K})$ the closure of $T_{1,K}\setminus T_{2,K}$ (in the metric $d$). Then, it is easy to see that $T_{1,K}$
\[T_{1,K}=\big(\bigcup_{w\in W_{*,1}}\Psi_wB_K\big)\bigcup L_1,\]
where we call each $\Psi_wB_K$ a level-$|w|$ \textit{building brick}.

We introduce two graphs based on $B_K$ and $T_{1,K}$.

\begin{definition}\label{def45}
    Let $K$ be a $\USC$.

    (a). Define $V_{B_K}=\{\Psi_1q_4,\Psi_kq_3\}\cup\{\Psi_{i1}q_4,\Psi_{ik}q_3\}_{i=1}^k$,
    \[E_{B_K}=\big\{\{\Psi_1q_4,\Psi_kq_3\},\{\Psi_1q_4,\Psi_{11}q_4\},\{\Psi_kq_3,\Psi_{kk}q_3\}\big\}\bigcup \big(\{\Psi_{i1}q_4,\Psi_{ik}q_3\}_{i=1}^k\big),\]
    and denote $G_{B_K}=(V_{B_K},E_{B_K})$ the associated finite graph.

    (b). Define $V_K=\bigcup_{w\in W_{*,1}} \Psi_wV_{B_K}$ and $E_K=\bigcup_{w\in W_{*,1}}\Psi_wE_{B_K}$, where $\Psi_wE_{B_K}:=\big\{\{\Psi_wx,\Psi_wy\}:\{x,y\}\in E_{B_K}\big\}$ for each $w\in W_{*,1}$. Denote $G_K=(V_K,E_K)$ the associated infinite graph.

    See Figure \ref{figure4} for an illustration.
\end{definition}

\begin{figure}[htp]
    \includegraphics[width=6cm]{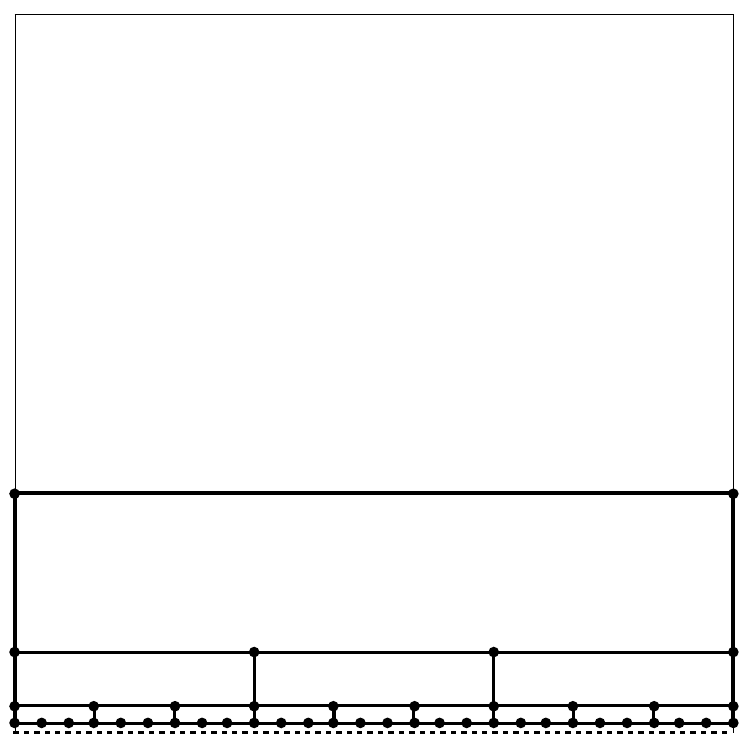}
    \begin{picture}(0,0)
        \put(-96,35){$G_{B_K}$}\put(1,27){$G_{K}$}
    \end{picture}
    \caption{The infinite graph $G_K$.}
    \label{figure4}
\end{figure}

The points in $V_{B_K}$ locate on the boundary of $B_K$, while the set $V_K$ consists of boundary vertices of all building bricks, with the closure of $V_k$ being $V_k\cup L_1$.

As before, we use the $\mathbb{R}^2$-coordinate to represent points in $V_K$.  One can check that
\[V_{B_K}=\big\{(0,\frac{1}{k}),(1,\frac{1}{k})\big\}\bigcup \big\{(\frac{l}{k},\frac{1}{k^2}):\,{l\in\mathbb{Z},\,}0\leq l\leq k\big\},\]
and
\[V_K=\bigcup_{m=0}^\infty\big\{(\frac{l}{k^m},\frac{1}{k^{m+1}}):\,{l\in\mathbb{Z},\,}0\leq l\leq k^m\big\}.\]

In the following, we define an energy on $G_K$.

\begin{definition}\label{def46}
    (a). For $f\in l(V_{B_K})$,  define
    \[\mcD_{G_{B_K}}(f)=\sum_{\{x,y\}\in E_{B_K}}\big(f(x)-f(y)\big)^2.\]

    (b). For $f\in l(V_K)$,  define
    \[\mcD_{G_K}(f)=\sum_{w\in W_{*,1}}r^{-|w|}\mcD_{G_{B_K}}(f\circ \Psi_w),\]
    where $r$ is the renormalization factor of $(\mcE,\mcF)$.
\end{definition}

\begin{definition}\label{def47}
    (a). Let $h\in \mcF$ be the function such that $0\leq h\leq 1$, $h|_{L_4}=0$, $h|_{L_2}=1$ and $\mcE(h)=1$; define $h'\in \mcF$ by
   \[
    h'(\Psi_wx)=\begin{cases}
       h\circ \Gamma_{r_3}(x),&\text{ if } w=ij\in W_2, i\in W_{1,1}, j\in W_{1,3},\\
      1,&\text{ if }w=ij\in W_2, i\notin W_{1,1},\\
        0,&\text{ elsewhere, }
    \end{cases}
    \]
    so that $0\leq h'\leq 1$, $h'|_{K\setminus T_{1,K}}=1$, $h'|_{T_{2,K}}=0$ and $\mcE(h')=k^2r^{-2}$.

    (b). For each $f\in l(V_{B_K})$, we define $\mcB_f^{(1)}, \mcB_f^{(2)}\in C(B_K)$ as
    \[\mathcal{B}^{(1)}_f(x)=f(\Psi_1q_4)+\big(f(\Psi_kq_3)-f(\Psi_1q_4)\big)\cdot h(x)\quad \hbox{ for }x\in B_K;\]
    \[
    \mathcal{B}^{(2)}_f(\Psi_ix)=f(\Psi_{i1}q_4)+\big(f(\Psi_{ik}q_3)-f(\Psi_{i1}q_4)\big)\cdot h(x)\quad\hbox{ for }1\leq i\leq k,\, x\in cl(K\setminus T_{1,K}),
    \]
    and define
    $\mathcal{B}_f\in C(B_K)$ as
    \[\mathcal{B}_f(x)=\mathcal{B}^{(1)}_f(x)\cdot h'(x)+\mathcal{B}^{(2)}_f(x)\cdot \big(1-h'(x)\big)\quad\hbox{ for }x\in B_K.\]
    Call $\mcB_f$ a \emph{building brick function} on $B_K$ induced by $f\in l(V_{B_K})$.
\end{definition}

\begin{lemma}\label{lemma48}
    Let $f,f'\in l(V_{B_K})$, and $\mathcal{B}_f$, $\mathcal{B}_{f'}$ be the functions defined in Definition \ref{def47}.

    (a). For $x\in \overline{\Psi_1q_4,\Psi_kq_3}$, \[\mcB_f(x)=f(\Psi_1q_4)\cdot\big(1-h(x)\big)+f(\Psi_kq_3)\cdot h(x).\]
    In addition, if $f\equiv c$ on $V_{B_K}$, then $\mcB_f\equiv c$ on $B_K$.

    (b). $\mcB_f\in \mcF_{B_K}$ and there exists $C>0$ depending only on $k$ and $r$ so that
    \[\mcE_{B_K}(\mcB_f)\leq C\cdot\mcD_{G_{B_K}}(f).\]

    (c). If $f(\Psi_kq_3)=f'(\Psi_1q_4)$ and $f(\Psi_{kk}q_3)=f'(\Psi_{11}q_4)$, then
    \[\mcB_f(1,x_2)=\mcB_{f'}(0,x_2)\quad\hbox{ for every }x_2\in [\frac{1}{k^2},\frac{1}{k}].\]

    (d). If $f(\Psi_1q_4)=f'(\Psi_{i1}q_4)$ and $f(\Psi_kq_3)=f'(\Psi_{ik}q_3)$ for some $1\leq i\leq k$, then
    \[\mcB_{f'}\circ \Psi_i(x)=\mcB_f(x)\quad\hbox{ for every } x\in \overline{\Psi_1q_4,\Psi_kq_3}.\]
\end{lemma}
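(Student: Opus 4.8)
The plan is to verify each of the four claims essentially by unpacking the definition of the building brick function $\mcB_f$ and exploiting the explicit structure of $h$ and $h'$ from Definition \ref{def47}. First I would prove (a): on the segment $\overline{\Psi_1q_4,\Psi_kq_3}$, which lies on the top edge of $B_K$, I claim $h'\equiv 1$ (this is the top line $K\setminus T_{1,K}$ where $h'=1$), so that $\mcB_f=\mcB_f^{(1)}$ there. Since $h|_{L_4}=0$ and $h|_{L_2}=1$ with $R(L_2,L_4)=1$, the affine formula $\mcB_f^{(1)}(x)=f(\Psi_1q_4)+(f(\Psi_kq_3)-f(\Psi_1q_4))h(x)$ rewrites as the convex combination stated. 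The ``constant'' assertion is immediate: if $f\equiv c$ then $\mcB_f^{(1)}\equiv c$ and $\mcB_f^{(2)}\equiv c$ (the differences vanish), hence $\mcB_f=c\cdot h'+c\cdot(1-h')=c$.

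For part (b), the plan is to show $\mcB_f\in\mcF_{B_K}$ by checking $\mcB_f\circ\Psi_{w^{(i)}}\in\mcF$ on each cell of a simple-set decomposition of $B_K$; membership follows since $h,h'\in\mcF$ and $\mcB_f$ is built from these by multiplication, affine shifts, and composition with the $\Psi_i$, all of which preserve $\mcF$ by self-similarity. For the energy bound I would use the product/Leibniz-type estimate: since $\mcB_f=\mcB_f^{(1)}h'+\mcB_f^{(2)}(1-h')$, bound $\mcE_{B_K}(\mcB_f)$ by a constant times $\mcE_{B_K}(\mcB_f^{(1)})+\mcE_{B_K}(\mcB_f^{(2)})$ plus cross terms controlled by $\mcE(h')=k^2r^{-2}$ and the sup-norms of the $\mcB_f^{(j)}$. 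Each $\mcE_{B_K}(\mcB_f^{(j)})$ is in turn a sum of terms of the form $(f(\cdot)-f(\cdot))^2\mcE(h)$ over the generating edges, which is exactly $\mcD_{G_{B_K}}(f)$ up to the constant $\mcE(h)=1$ and the finitely many $k$-dependent factors from the scalings. This is the step I expect to be the main obstacle, because one must package the several multiplicative pieces so that the constant $C$ depends only on $k$ and $r$, and in particular control the cross-terms uniformly in $f$ rather than just for fixed $f$; the key leverage is that $f$ enters only affinely, so every energy contribution is a finite positive-combination of the squared edge-differences $(f(x)-f(y))^2$ for $\{x,y\}\in E_{B_K}$.

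Parts (c) and (d) are boundary-matching identities and should follow by direct computation from the formulas. For (c), along the right edge $x_1=1$ of $B_K$ (with $x_2\in[\frac{1}{k^2},\frac{1}{k}]$) I would identify which of the defining pieces of $\mcB_f$ is active and show it depends only on the boundary data $f(\Psi_kq_3)$ and $f(\Psi_{kk}q_3)$; the matching left edge $x_1=0$ of $B_{f'}$ depends only on $f'(\Psi_1q_4)$ and $f'(\Psi_{11}q_4)$, so the two hypotheses $f(\Psi_kq_3)=f'(\Psi_1q_4)$ and $f(\Psi_{kk}q_3)=f'(\Psi_{11}q_4)$, together with the fact that $h,h'$ are shared and $D_4$-symmetric, force the values to agree pointwise. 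For (d), the hypothesis matches the two endpoints $f(\Psi_1q_4)=f'(\Psi_{i1}q_4)$ and $f(\Psi_kq_3)=f'(\Psi_{ik}q_3)$ of the top segment, and on $\overline{\Psi_1q_4,\Psi_kq_3}$ both sides reduce via part (a) to the same affine expression in $h$, so $\mcB_{f'}\circ\Psi_i=\mcB_f$ there. These last two parts are routine once the active-piece bookkeeping is done carefully.
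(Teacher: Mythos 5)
Parts (a) and (c) of your plan are correct and coincide with the paper's argument: on the top segment $h'\equiv 1$, so $\mcB_f=\mcB_f^{(1)}$ there, and on the two vertical edges $h\in\{0,1\}$ reduces both pieces $\mcB_f^{(1)},\mcB_f^{(2)}$ to constants determined by the relevant vertex values, after which the $\Gamma_h$-symmetry of $h'$, i.e. $h'(1,x_2)=h'(0,x_2)$, forces the match. The genuine gap is in (b), precisely at the step you flag as the main obstacle. After splitting $\mcE_{B_K}(\mcB_f)\leq 2\mcE_{B_K}(\mcB_f^{(1)}h')+2\mcE_{B_K}\big(\mcB_f^{(2)}(1-h')\big)$ and applying the product estimate, the cross terms are of the form $\|\mcB_f^{(j)}\|_{L^\infty}^2\,\mcE_{B_K}(h')$. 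Your claimed resolution --- that every contribution is a positive combination of squared edge differences because $f$ enters affinely --- is false for these terms: $\|\mcB_f^{(1)}\|_{L^\infty}$ is comparable to $\max\{|f(\Psi_1q_4)|,|f(\Psi_kq_3)|\}$, which depends on the actual values of $f$, not only on its differences. Concretely, for $f\equiv c$ with $c$ large, $\mcD_{G_{B_K}}(f)=0$, while already the first splitting produces the quantity $4c^2\mcE_{B_K}(h')>0$, which cannot be bounded by $C\cdot\mcD_{G_{B_K}}(f)=0$; the chain of inequalities cannot close. The missing idea, which is exactly how the paper proceeds, is constant invariance: $\mcB_{f+c}=\mcB_f+c$, so both $\mcE_{B_K}(\mcB_f)$ and $\mcD_{G_{B_K}}(f)$ are unchanged under $f\mapsto f+c$, and one may therefore assume $f(\Psi_1q_4)=0$ \emph{before} splitting. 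Since $G_{B_K}$ is connected and every vertex is joined to $\Psi_1q_4$ by at most $k+1$ edges, Cauchy--Schwarz gives $\|\mcB_f^{(1)}\|^2_{L^\infty}+\|\mcB_f^{(2)}\|^2_{L^\infty}\leq C_1(k)\,\mcD_{G_{B_K}}(f)$, and then your product estimate finishes. (Equivalently, one can write $\mcB_f=\mcB_f^{(2)}+(\mcB_f^{(1)}-\mcB_f^{(2)})h'$ and observe that $\mcB_f^{(1)}-\mcB_f^{(2)}$ involves only differences of $f$-values.)

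A smaller slip occurs in (d): part (a) cannot be applied to $\mcB_{f'}\circ\Psi_i$, because $\Psi_i$ maps the segment $\overline{\Psi_1q_4,\Psi_kq_3}$ to the bottom edge of $B_K$ at height $1/k^2$, where $h'=0$ rather than $1$. The correct reduction is the one you already used in (c): there the active piece is $\mcB_{f'}^{(2)}$, and $\mcB_{f'}(\Psi_ix)=f'(\Psi_{i1}q_4)\big(1-h(x)\big)+f'(\Psi_{ik}q_3)h(x)$, which matches $\mcB_f(x)$, computed via part (a), under your two hypotheses. So (d) is correct in substance, but the mechanism is the $h'=0$ bookkeeping on the image side, not part (a) applied to both sides.
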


\begin{proof}
    (a) is immediate from the construction of $\mathcal{B}_f$.

    (b). By the strongly local property of $(\mcE,\mcF)$, it is easy to see that $\mathcal{B}_f^{(1)}\in\mcF_{B_K},\,\mathcal{B}_f^{(2)}\in\mcF_{B_K}$ and
    \begin{align*}
        \mcE_{B_K}(\mathcal{B}_f^{(1)})&\leq \big(f(\Psi_1q_4)-f(\Psi_kq_3)\big)^2\cdot \mcE(h)=\big(f(\Psi_1q_4)-f(\Psi_kq_3)\big)^2,\\
        \mcE_{B_K}(\mathcal{B}_f^{(2)})&\leq r^{-1}\cdot\sum_{i=1}^k\big(f(\Psi_{i1}q_4)-f(\Psi_{ik}q_3)\big)^2\cdot \mcE(h)=r^{-1}\cdot\sum_{i=1}^k\big(f(\Psi_{i1}q_4)-f(\Psi_{ik}q_3)\big)^2.
    \end{align*}
    So by  \cite[Theorem 1.4.2]{FOT}, $\mathcal{B}_f\in \mcF_{B_K}$. To get the energy estimate of $\mathcal{B}_f$, without loss of generality, we assume $f(\Psi_1q_4)=0$ so that $\|\mcB^{(1)}_f\|_{L^\infty(B_K,\mu)}^2\leq C_1\cdot\mcD_{G_{B_K}}(f)$ and $\|\mcB^{(2)}_f\|_{L^\infty(B_K,\mu)}^2\leq C_1\cdot\mcD_{G_{B_K}}(f)$ for some $C_1>0$ depending only on $k$ and $r$. Then, the desired estimate of $\mcE_{B_K}(\mcB_f)$ follows from the inequality that $\mcE_{B_K}(g\cdot g')\leq 2\|g\|_{L^\infty(B_K,\mu)}^2\mcE_{B_K}(g')+2\|g'\|_{L^\infty(B_K,\mu)}^2\mcE_{B_K}(g)$ for every $g,g'\in \mcF_{B_K}$.

    (c). For $x_2\in[\frac {1}{k^2}, \frac 1k]$, one can check that \[\mcB_f(1,x_2)=f(\Psi_kq_3)\cdot h'(1,x_2)+f(\Psi_{kk}q_3)\cdot \big(1-h'(1,x_2)\big),\]
    while
    \[\mcB_{f'}(0,x_2)=f'(\Psi_1q_4)\cdot h'(0,x_2)+f'(\Psi_{11}q_4)\cdot \big(1-h'(0,x_2)\big).\]
    Noticing that $h'(1,x_2)=h'(0,x_2)$ by the $\Gamma_h$-symmetry of $h'$, we immediately have $\mcB_f(1,x_2)=\mcB_{f'}(0,x_2)$.

    (d) can be verified in a same way as (c).
\end{proof}

\begin{proposition}\label{lemma49}
    Let $f\in C(V_{K}\cup L_1)$ with $\mcD_{G_{K}}(f)<\infty$, and define $g\in C(T_{1,K}\setminus L_1)$ as
    \[g(x)=\mcB_{f\circ \Psi_w}\circ \Psi_w^{-1}(x),\text{ if }x\in \Psi_wB_K, w\in W_{*,1}.\]
    Then, $g$ extends continuously to $g\in \mcF_{T_{1,K}}$. In addition, there is $C>0$ depending only on $k$ and $r$, such that
    \[\mcE_{T_{1,K}}(g)\leq C\cdot\mcD_{G_K}(f).\]
\end{proposition}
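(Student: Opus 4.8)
The plan is to glue the building-brick functions over all of $T_{1,K}$, check that the glued function is continuous (including across $L_1$), and then extract membership in $\mcF_{T_{1,K}}$ together with the energy bound from the single-brick estimate of Lemma \ref{lemma48}(b) by a completeness argument in which every estimate is organized as a tail of the convergent series $\mcD_{G_K}(f)$. The reason the bound is essentially free once membership is known is that, by the energy decomposition for simple sets, $\mcE_{T_{1,K}}(g)=\sum_{w\in W_{*,1}}r^{-|w|}\mcE_{B_K}(\mcB_{f\circ\Psi_w})\le C\sum_{w}r^{-|w|}\mcD_{G_{B_K}}(f\circ\Psi_w)=C\,\mcD_{G_K}(f)$, using Lemma \ref{lemma48}(b) and Definition \ref{def46}(b); so the whole difficulty sits in showing $g\in\mcF_{T_{1,K}}$.

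First I would prove continuity. On each brick $\Psi_wB_K$ the formula gives a continuous function, so only agreement on shared boundaries is at issue. Two bricks meet either along a shared vertical segment (two same-level bricks, possibly in neighbouring scaled strips $\Psi_wT_{1,K}$) or along a horizontal sub-brick edge (a brick over the top edge of a child $\Psi_{wi}B_K$), and these are exactly the matchings of Lemma \ref{lemma48}(c) and (d); their hypotheses hold automatically, since $f$ is single-valued on $V_K$ and adjacent bricks share the relevant vertices, so both sides reduce to the value of $f$ at a common point of $V_K$. Hence the pieces glue to a continuous $g$ on $T_{1,K}\setminus L_1$. For the extension across $L_1$ I would use that each $\mcB_{f\circ\Psi_w}$ is a convex combination (through $h,h'\in[0,1]$) of the values of $f\circ\Psi_w$ on $V_{B_K}$, so on $\Psi_wB_K$ the function $g$ is squeezed between the min and max of $f$ over the finitely many vertices of that brick; as $x\to p\in L_1$ the bricks containing $x$ shrink to $p$, their vertices converge to $p$, and continuity of $f$ forces $g(x)\to f(p)$. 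Setting $g|_{L_1}=f|_{L_1}$ then gives $g\in C(T_{1,K})$.

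Next I would introduce finite truncations. For $N\ge1$ let $g^{(N)}=g$ on $\bigcup_{|w|\le N}\Psi_wB_K$, and on each fringe strip $\Psi_wT_{1,K}$ with $|w|=N+1$ set $g^{(N)}=\big(a_w+\Delta_w\,h\big)\circ\Psi_w^{-1}$ with $a_w=(f\circ\Psi_w)(\Psi_1q_4)$ and $\Delta_w=(f\circ\Psi_w)(\Psi_kq_3)-a_w$, i.e. the down-propagated top interpolation $\mcB^{(1)}_{f\circ\Psi_w}$ of Definition \ref{def47}(b). The key point is that this fringe piece is built from the single global function $h\in\mcF$, so $a_w+\Delta_w h|_{T_{1,K}}\in\mcF_{T_{1,K}}$ and hence $g^{(N)}$ on $\Psi_wT_{1,K}$ is one domain element rather than infinitely many brick pieces; since there are only finitely many bricks of level $\le N$ and finitely many strips of level $N+1$, $g^{(N)}$ is a \emph{finite} gluing of $\mcF$-pieces. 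It is continuous: on a strip's left and right sides $h$ equals $0$ and $1$ (they lie on $L_4$ and $L_2$), so $\mcB^{(1)}_{f\circ\Psi_w}$ is constant there, equal to $f$ at the corresponding top corner, and neighbouring strips share that corner; on the top edge it agrees with $g$ by Lemma \ref{lemma48}(a). Therefore $g^{(N)}\in\mcF_{T_{1,K}}$, and its energy splits as a level-$\le N$ part bounded by $C\sum_{|w|\le N}r^{-|w|}\mcD_{G_{B_K}}(f\circ\Psi_w)$ (Lemma \ref{lemma48}(b)) plus a fringe part bounded by $\sum_{|w|=N+1}r^{-|w|}\Delta_w^2\le\sum_{|w|=N+1}r^{-|w|}\mcD_{G_{B_K}}(f\circ\Psi_w)$ (the edge $\{\Psi_1q_4,\Psi_kq_3\}\in E_{B_K}$ and $\mcE(h)=1$), both dominated by $C\,\mcD_{G_K}(f)$.

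Finally I would pass to the limit. Since $g^{(N)}=g$ on every fixed brick once $N$ is large, and the squeezing handles $L_1$, we get $g^{(N)}\to g$ pointwise; and for $N<M$ the difference is supported on $\bigcup_{|w|=N+1}\Psi_wT_{1,K}$, where the energies of $g^{(N)}$ and $g^{(M)}$ are each controlled by the tail $C\sum_{w'\supseteq w}r^{-|w'|}\mcD_{G_{B_K}}(f\circ\Psi_{w'})$, so $\mcE_{T_{1,K}}(g^{(M)}-g^{(N)})\le C\sum_{|w'|\ge N+1}r^{-|w'|}\mcD_{G_{B_K}}(f\circ\Psi_{w'})\to0$ because $\mcD_{G_K}(f)<\infty$. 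Thus $\{g^{(N)}\}$ is $\mcE_{T_{1,K}}$-Cauchy; by (RF2) it converges in the Hilbert space $\mcF_{T_{1,K}}/\!\sim$, by (RF4) its limit coincides with the pointwise limit $g$ up to a constant, so $g\in\mcF_{T_{1,K}}$ and $\mcE_{T_{1,K}}(g)=\lim_N\mcE_{T_{1,K}}(g^{(N)})\le C\,\mcD_{G_K}(f)$. The main obstacle is exactly this infinite accumulation of bricks onto $L_1$: membership cannot be read off a finite gluing of bricks, and must instead come from completeness of the resistance form. The technical trick that makes this work is to absorb each whole fringe strip into a single domain element via the global $h$ (keeping the truncations finite gluings) while simultaneously keeping their energies, and the Cauchy differences, controlled by tails of the one convergent series $\mcD_{G_K}(f)$.
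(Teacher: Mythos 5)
Your proposal follows essentially the same route as the paper's proof: the identical truncation construction (keep $g$ on the bricks up to a fixed level, replace each fringe strip $\Psi_wT_{1,K}$ by the $h$-interpolation between $f(\Psi_{w1}q_4)$ and $f(\Psi_{wk}q_3)$), the identical energy bounds via Lemma \ref{lemma48}(b) together with the edge $\{\Psi_1q_4,\Psi_kq_3\}\in E_{B_K}$ and $\mcE(h)=1$, and then a limiting argument. The only difference is in the closing functional-analytic step --- the paper extracts a weakly convergent subsequence from the uniform $L^2$ and energy bounds, while you run an $\mcE$-Cauchy/completeness argument with tail estimates --- and both are valid.
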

\begin{proof}
    We extend $g$ to a function in $ C(T_{1,K})$ by letting $g|_{L_1}=f|_{L_1}$, noticing that by definition $\min_{p\in V_{B_K}}f\circ \Psi_w(p)\leq g(x)\leq \max_{p\in V_{B_K}}f\circ\Psi_w(p)$ for each $w\in W_{*,1}$ and $x\in \Psi_wB_K$.

    Let $h$ be the function defined in Definition \ref{def47}. For $m\geq 2$, define $g_m$ on $T_{1,K}$ by
    \[
    g_m(x)=
    \begin{cases}
        g(x),&\text{if }x\in T_{1,K}\setminus T_{m,K},\\
        f(\Psi_{w1}q_4)\cdot\big(1-h(\Psi^{-1}_wx)\big)+f(\Psi_{wk}q_3)\cdot h(\Psi_w^{-1}x),&\text{if }x\in \Psi_wT_{1,K}, \text{ for some }
        w\in W_{m-1,1}.
    \end{cases}
    \]
    It is easy to see that $g_m\in \mcF_{T_{1,K}}$  and
    \[\begin{aligned}
        \mcE_{T_{1,K}}(g_m)=&\mcE_{T_{1,K}\setminus T_{m,K}}(g_m)+\mcE_{T_mK}(g_m)\\=&\sum_{n=0}^{m-2}\sum_{w\in W_n}\mcE_{\Psi_wB_K}(\mcB_{f\circ \Psi_w}\circ\Psi_w^{-1})+\sum_{w\in W_{m-1,1}}\mcE_{\Psi_w T_{1,K}}(g_m)\\
        \leq&C_1\cdot \sum_{n=0}^{m-2}r^{-n}\mcD_{G_{B_K}}(f\circ \Psi_w)+r^{-m+1}\sum_{w\in W_{m-1,1}}\big(f(\Psi_{w1}q_4)-f(\Psi_{wk}q_3)\big)^2\cdot \mcE(h)\\
        \leq& C\cdot\mcD_{G_K}(f),
    \end{aligned}\]
    for some $C_1, C>0$ depending only on $k$ and $r$, where the first inequality follows from Lemma \ref{lemma48} (b), and the second inequality follows from the definition of $\mcD_{G_K}(f)$ and $\mcE(h)=1$. Clearly, $\{\|g_m\|_{L^2(T_{1,K},\mu)}\}$ is uniformly bounded, so we can find a subsequence of $\{g_{m}\}$ and $g\in\mcF_{T_{1,K}}$ such that $g_m\rightrightarrows g$ and $g_m\circ\Psi_i$ converges to  $g\circ\Psi_i$ weakly in $(\mcE,\mcF)$ for each $i\in W_{1,1}$. Thus, the desired energy estimate holds.
\end{proof}

\subsection{An upper bound estimate of the resistance metric}\label{sec42}
In this subsection, we prove an upper bound estimate of $R(x,y)$.

\begin{lemma}\label{lemma410}
    $\frac2k\leq r\leq\frac{N}{k^2}\leq 1-\frac{1}{k^2}$.
\end{lemma}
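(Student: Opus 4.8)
The claim bundles together three inequalities: a lower bound $r \geq 2/k$, an upper bound $r \leq N/k^2$, and the trivial arithmetic fact $N/k^2 \leq 1 - 1/k^2$ (which follows at once from $N \leq k^2 - 1$). So the real content is the two bounds on $r$, and the natural strategy is to extract these from carefully chosen trial functions fed into the variational characterization of energy and the self-similar identity $\mcE(f) = \sum_{i=1}^N r^{-1}\mcE(f\circ\Psi_i)$.

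For the \textbf{upper bound} $r \leq N/k^2$, the plan is to use the function $h$ from Definition \ref{def47}, normalized so that $h|_{L_4}=0$, $h|_{L_2}=1$, and $\mcE(h)=1$. The idea is to construct a competitor that respects the self-similar decomposition: since there are $k$ columns of level-$1$ cells spanning the horizontal direction, one expects that a function increasing linearly across the carpet can be built so that each of the $N$ cells contributes an energy controlled by $k^{-2}\mcE(h)$ after accounting for the renormalization. Concretely, I would take the ``horizontal coordinate'' type function whose restriction to each column is an affine interpolation, apply $\mcE(h) = \sum_i r^{-1}\mcE(h\circ\Psi_i)$, and bound each $\mcE(h\circ\Psi_i)$ from below by comparing it to the energy of the corresponding one-cell spanning function. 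Because the cells are $1/k$-scaled copies and there are $N$ of them, summing the cell contributions and using that the minimizing $h$ has energy $1$ should force $r^{-1} \cdot N \cdot (\text{something} \geq k^{-2}) \geq 1$ in the right direction, yielding $r \leq N/k^2$. The cleanest route is probably to observe that placing a harmonic function that is constant on horizontal slices gives an explicit upper bound on $R(L_2,L_4)^{-1}$ through the $N$ parallel/series structure of the cells.

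For the \textbf{lower bound} $r \geq 2/k$, the plan is to exploit the \emph{boundary included} condition: the segment $\overline{q_1,q_2}=L_1$ lies entirely in $K$, and so does $L_3$ by symmetry. A spanning function between $L_2$ and $L_4$ must do work along these two full line segments, and each such segment passes through $k$ consecutive bottom-row (resp. top-row) cells numbered by \eqref{eqn23}. By restricting the self-similar identity to these boundary cells and using that the two straight segments $L_1$ and $L_3$ give two parallel conducting paths, each consisting of $k$ unit resistors in series after rescaling, one gets that the total conductance is at least $2 \cdot (k \cdot r^{-1})^{-1}$ worth of contribution in the appropriate normalization; translating $R(L_2,L_4)=1$ into the energy language then yields $r \geq 2/k$. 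I expect the \textbf{main obstacle} to be making the cell-to-cell energy comparisons rigorous without circularity: the trial functions must be shown to lie in $\mcF$ (using the strong locality and the gluing compatibility already established in Lemma \ref{lemma48}), and the inequalities relating $\mcE(h\circ\Psi_i)$ to single-cell spanning energies must be uniform and not secretly assume the bound being proved. The symmetry hypotheses and the explicit indexing \eqref{eqn23} of the boundary cells are what make the ``two parallel paths of $k$ series cells'' picture precise, so I would lean heavily on the $D_4$-symmetry of $(\mcE,\mcF)$ to reduce the many cell contributions to a small number of comparable quantities.
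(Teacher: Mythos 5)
Your proposal's treatment of the upper bound $r\leq N/k^2$ contains a genuine gap, in fact two. First, the inequality directions are backwards: with the normalization $\mcE(h)=1$, the self-similar identity gives $r=\sum_{i=1}^N\mcE(h\circ\Psi_i)$ exactly, so bounding each cell energy $\mcE(h\circ\Psi_i)$ from \emph{below} by $k^{-2}$ and summing yields $r\geq N/k^2$ --- the opposite of what you want; to get an upper bound on $r$ you need upper control of cell energies relative to the total energy, which is not available for the $L_2$--$L_4$ spanning minimizer. Second, the ``columns'' your construction leans on do not exist in a $\USC$: level-$1$ cells may sit at arbitrary (irrational) offsets off the $1/k$-grid --- that is the whole point of ``unconstrained'' --- so a column-by-column affine interpolation is not even well defined, let alone continuous across cell interfaces. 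The paper's actual argument avoids both problems: take $u(x_1,x_2)=x_1$, and let $h_0\in\mathcal{H}_0$ and $h_1\in\mathcal{H}_1$ be the energy minimizers with boundary data $u|_{\partial_0K}$ and $u|_{\partial_1K}$ respectively (their existence is itself nontrivial; it uses the extension construction of Proposition \ref{lemma49} together with the already-proved bound $r\geq\frac2k$). Because $u$ is globally defined and affine, $h_1\circ\Psi_i$ is the harmonic extension of linearly scaled data in each cell, so $\mcE(h_1\circ\Psi_i)=k^{-2}\mcE(h_0)$ exactly; and since $h_1$ agrees with $u$ on $\partial_0K$, it is a competitor for $h_0$, whence $\mcE(h_0)\leq\mcE(h_1)=Nr^{-1}k^{-2}\mcE(h_0)$ and $r\leq N/k^2$ after cancelling $\mcE(h_0)>0$. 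This two-scale comparison of the \emph{same} boundary-value problem, which makes the unknown energy cancel, is the idea missing from your sketch and cannot be reproduced by working only with the normalized spanning minimizer.

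Your plan for the lower bound $r\geq\frac2k$ is essentially the paper's proof: restrict to $A=\bigcup_{w\in W_{1,1}\cup W_{1,3}}\Psi_wK$, the two full rows guaranteed by the boundary-included condition, note that each cell has resistance $r\cdot R(L_2,L_4)=r$ between its left and right edges, and compare with two parallel chains of $k$ cells in series. Two cautions, however. Your conductance formula should read $2(kr)^{-1}$, not $2(k\cdot r^{-1})^{-1}$. More importantly, the series formula for cells glued along line segments (rather than at single nodes) is \emph{not} a free network inequality: both shorting the interfaces and inserting the concatenated test function bound the chain conductance from above by $(kr)^{-1}$, whereas the argument needs the lower bound $C_{\mathrm{row}}\geq(kr)^{-1}$ so that $1=\mcE(h)\geq\mcE_A(h|_A)\geq 2(kr)^{-1}$. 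The paper secures this by identifying the minimizer on $A$ exactly --- its restriction to the $i$-th cell of a row is $k^{-1}h+\frac ik$ --- via the reflection anti-symmetry argument; this is precisely where the $D_4$-symmetry does real work, so your closing remark points in the right direction, but that step is the substance of the proof rather than a routine reduction.
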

\begin{proof}
    First, we let $A=\bigcup_{w\in W_{1,1}\cup W_{1,3}}\Psi_wK$. Let $h\in \mcF$ such that $h|_{L_4}=0$, $h|_{L_2}=1$ and $\mcE(h)=1$; also let $h'\in \mcF_A$ such that $h'|_{L_4\cap A}=0$, $h'|_{L_2\cap A}=1$ and $\mcE(h')=\min\{\mcE_A(f):\,f\in \mcF_A,\, f|_{L_4\cap A}=0,\, f|_{L_2\cap A}=1\}$. One can check that
    \[
    h'(\frac{i+x_1}{k},\frac{j+x_2}{k})=\frac{h(x_1,x_2)}{k}+\frac{i}{k}\quad\hbox{ for }(x_1,x_2)\in K,\,i\in\{0,1,\cdots,k-1\}\,\hbox{ and }j\in\{0,k-1\}.
    \]
    Hence,
    \[
    \mcE(h)\geq \mcE(h')=\sum_{i\in W_{1,1}\cup W_{1,3}}r^{-1}\mcE(h'\circ \Psi_i)=2kr^{-1}k^{-2}\mcE(h).
    \]
    This implies that $r\geq \frac2k$.

    Next, let $u:\square\to \mathbb{R}$ be defined by $u(x_1,x_2)=x_1$. Then, by Proposition \ref{lemma49} and the fact that $r\geq \frac2k$, we know that we can find $h_0\in\mathcal{H}_0$ such that  $h_0|_{\partial_0K}=u|_{\partial_0K}$, and find $h_1\in\mathcal{H}_1$ such that $h_1|_{\partial_1K}=u|_{\partial_1K}$. Then, by self-similarity, we see that
    \[
    \mcE(h_0)\leq\mcE(h_1)=\sum_{i=1}^Nr^{-1}\mcE(h_1\circ\Psi_i)=\sum_{i=1}^Nr^{-1}k^{-2}\mcE(h_0)=\frac{N}{rk^2}\mcE(h_0).
    \]
    This implies that $r\leq \frac{N}{k^2}$.
\end{proof}

\begin{lemma}\label{lemma411}
$R(x,y)\leq 2k^3\cdot R(q_1,q_2)$ for every $x,y\in \partial_0K$.
\end{lemma}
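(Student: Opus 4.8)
The plan is to bound the resistance between any two boundary points $x,y\in\partial_0K$ by a constant multiple of $R(q_1,q_2)$, exploiting the $D_4$-symmetry and the connectivity of the boundary. First I would observe that by $\msG$-symmetry, $R(p,p')=R(q_1,q_2)$ whenever $p,p'$ are adjacent corners of $\square$, and similarly all four sides $L_i$ are mutually equivalent under the symmetry group; in particular $R(L_2,L_4)=1$ is comparable to $R(q_1,q_2)$ since the boundary is a bounded diameter set in the resistance metric. The key structural input is that the square boundary $\partial_0K$ is connected and consists of the four line segments $L_1,\dots,L_4$, each of which is subdivided by the level-$1$ structure into pieces of scale $1/k$.

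The main step would be a chaining argument along the boundary. Given $x,y\in\partial_0K$, I would connect them by a path that stays on $\partial_0K$ and passes through a bounded number of the level-$1$ boundary vertices $\{\Psi_i q_j\}$. Since there are at most $4(k-1)$ boundary cells arranged around $\partial_0K$, any two boundary points can be joined through at most on the order of $k$ consecutive adjacent boundary cells. Using the triangle inequality for the resistance metric, $R(x,y)\le\sum_j R(z_j,z_{j+1})$ over the intermediate vertices, so it suffices to bound each single-step resistance $R(z_j,z_{j+1})$ between consecutive level-$1$ boundary vertices by $R(q_1,q_2)$ up to a constant. Here I would use self-similarity: each $R(z_j,z_{j+1})$ is the resistance across one level-$1$ boundary cell $\Psi_iK$, and by the self-similar scaling $\mcE(f)=\sum_i r^{-1}\mcE(f\circ\Psi_i)$ together with the monotonicity $R(\Psi_i x,\Psi_i y)\le r\,R(x,y)$ (restricting test functions to a single cell only increases energy, hence the resistance within one cell is at most $r$ times the global one), each step contributes at most $r\cdot R(q_1,q_2)\le R(q_1,q_2)$ since $r<1$ by Lemma \ref{lemma410}.

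Assembling these, with on the order of $k$ segments per side and four sides, the total number of single-cell crossings in the worst case is bounded by a constant multiple of $k$, and accounting for the two-dimensional indexing of boundary cells one arrives at the factor $2k^3$ as a (generous) uniform bound, giving $R(x,y)\le 2k^3\cdot R(q_1,q_2)$. The constant is deliberately not optimized; the point is only that it depends on $k$ alone, consistent with the paper's stated goal that all resistance estimates have constants depending only on $1/k$.

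The hardest part will be justifying the single-step bound $R(z_j,z_{j+1})\le C\,r\,R(q_1,q_2)$ rigorously, because adjacent boundary cells may meet only along a line segment of possibly irrational length (a defining feature of $\USC$'s), so the naive ``resistance in series'' picture along $\partial_0K$ must be replaced by a genuine comparison using test functions supported near the boundary. I expect to handle this by taking a near-optimal function for $R(q_1,q_2)$, transporting it into each boundary cell via $\Psi_i$ and the symmetry maps in $\msG$, and patching the pieces together using the locality of $(\mcE,\mcF)$ and the gluing compatibility already established for building-brick functions in Lemma \ref{lemma48}; the energy of the patched function is controlled by the sum of the cell energies, yielding the desired chain bound.
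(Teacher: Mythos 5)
There is a genuine gap at the endpoints of your chain. Your plan reduces everything to single-step resistances $R(z_j,z_{j+1})$ between \emph{consecutive level-$1$ boundary vertices}, and that step is indeed easy (dropping terms in the self-similar identity gives $\mcE(g)\geq r^{-1}\mcE(g\circ\Psi_i)$, hence $R(\Psi_i u,\Psi_i v)\leq r\,R(u,v)$; no gluing or building-brick machinery is needed, so the part you flagged as hardest is actually free). But a general point $x\in\partial_0K$ is not a level-$1$ vertex, so your chain must begin with a step $R(x,z_1)$ from $x$ to the nearest vertex of the level-$1$ cell containing it. Bounding that step by self-similarity gives $R(x,z_1)\leq r\,R(\Psi_i^{-1}x,q_j)$, where $\Psi_i^{-1}x$ is again an arbitrary point of $\partial_0K$ — i.e.\ the same quantity you are trying to estimate. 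The argument as written is circular. Nor can one patch it by a crude bootstrap: writing $S=\sup_{x,y\in\partial_0K}R(x,y)$ (finite by joint continuity and compactness), your chain yields $S\leq 2rS+Ck\,R(q_1,q_2)$, which is vacuous unless $r<1/2$; Lemma \ref{lemma410} only gives $r\leq 1-k^{-2}$, which tends to $1$.

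The paper's proof closes exactly this gap by making the chain multi-scale. For $x=(\sum_{m=1}^M\alpha_mk^{-m},0)$ on $L_1$ with finite $k$-adic expansion, one chains from $q_1$ to $x$ through the partial sums: at scale $m$ there are $\alpha_m\leq k-1$ steps, each crossing a level-$m$ boundary cell between two of its grid corners (the boundary cells are grid-aligned by \eqref{eqn23}), and each costing at most $r^mR(q_1,q_2)$ by the iterated form of the one-step bound. Summing the geometric series and using $r\leq\frac{N}{k^2}\leq\frac{k^2-1}{k^2}$ from Lemma \ref{lemma410} gives $R(x,q_1)\leq(k-1)\frac{r}{1-r}R(q_1,q_2)\leq(k^3-1)R(q_1,q_2)$; general pairs $x,y\in\partial_0K$ are then handled by symmetry and the triangle inequality through at most two corners $q_i$, and arbitrary (non-$k$-adic) points by continuity of $R$. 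Note this is also where the true source of the constant $2k^3$ lies — it is $(k-1)\cdot\frac{r}{1-r}$, not a count of boundary cells — so your accounting ``order $k$ segments per side'' would not produce the right dependence either. If you reorganize your argument so that each scale contributes at most $k-1$ steps plus a \emph{single} remainder term passed to the next scale, your recursion becomes $T\leq(k-1)rR(q_1,q_2)+rT$, which does close for any $r<1$ and recovers the paper's bound.
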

\begin{proof}
    For $x=(\sum_{m=1}^M\alpha_m k^{-m},0)$, with $M<\infty$ and $\alpha_m\in \{0,1,\cdots,k-1\}$ for each $m=1,2,\cdots,M$, by the triangle inequality, we have
    \[R(x,q_1)\leq \sum_{m=1}^M \alpha_mr^m R(q_1,q_2)\leq (k-1)\cdot R(q_1,q_2)\sum_{m=1}^\infty r^m\leq \frac{r(k-1)R(q_1,q_2)}{1-r}\leq (k^3-1)\cdot R(q_1,q_2),\]
    where in the last inequality we use $r\leq \frac{N}{k^2}\leq \frac{k^2-1}{k^2}$ due to Lemma \ref{lemma410}. Now, for $x=(x_1,x_2),y=(y_1,y_2)\in \partial_0K$ such that $x_1,x_2,y_1,y_2$ have finite $k$-adic expansion, the desired estimate of $R(x,y)$ follows immediately by using symmetry, by using the  triangle inequality, and  by inserting at most $2$ boundary vertices from $\{q_i\}_{i=1}^4$. Finally, the above estimate extends to general $x,y\in \partial_0K$ by continuity.
\end{proof}

\begin{proposition}\label{prop412}
    $R(q_1,q_2)\leq C$ for some $C>0$ depending only on $k$.
\end{proposition}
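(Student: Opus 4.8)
The goal $R(q_1,q_2)\le C$ is, by the variational formula $R(q_1,q_2)=\big(\inf\{\mcE(f):f(q_1)=0,f(q_2)=1\}\big)^{-1}$, equivalent to the lower bound $\inf\{\mcE(f):f(q_1)=0,f(q_2)=1\}\ge C^{-1}$. Dually, since the effective resistance equals the minimal energy of a unit current flow from $q_1$ to $q_2$, it suffices to exhibit one such flow of energy at most $C$. The plan is to build this flow out of the building bricks of Definition \ref{def47} and Proposition \ref{lemma49}, organizing it into three pieces: a funnel that spreads the current injected at the corner $q_1$ into the bulk, a bounded family of coarse crossings carrying the spread current from the left part of $K$ to the right part, and a symmetric funnel that collects the current at $q_2$.

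The key point, and the reason the constant depends only on $k$, is that every energy contribution is pinned to the normalization $R(L_2,L_4)=1$ and to self-similar scaling, never to the unknown quantity $R(q_1,q_2)$ itself. For the coarse crossings I would use the function $h$ of Definition \ref{def47}(a) (with $\mcE(h)=1$) together with its $\msG$-images and cell-rescalings; by self-similarity a crossing of a level-$m$ cell costs energy comparable to $r^m$. For the corner funnel I set up a self-referential estimate: writing $F$ for the resistance of spreading a unit current out of a corner into the cell attached to it, the innermost corner cell is an exact rescaled copy of the whole configuration, so $F$ obeys an inequality of the form $F\le c_0+rF$, where $c_0$ is a top-scale crossing cost controlled by the normalization. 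Solving and invoking $r\le 1-\frac1{k^2}$ from Lemma \ref{lemma410} gives $F\le c_0/(1-r)\le c_0 k^2$. Summing the three contributions yields a flow of energy $\le C(k)$, hence $R(q_1,q_2)\le C(k)$.

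The main obstacle is to realize this funnel rigorously on a $\USC$, where the absence of the Knight and corner moves and the possibility that cells meet along segments of irrational length rule out the clean grid combinatorics of the standard carpet. I would handle this using the boundary graph $G_K$ and the building-brick functions $\mcB_f$ as explicit conductors along $T_{1,K}$: Lemma \ref{lemma48} guarantees these functions match continuously across bricks and have energy controlled by the discrete energy $\mcD_{G_{B_K}}$, while Proposition \ref{lemma49} assembles them into a genuine element of $\mcF_{T_{1,K}}$ with $\mcE_{T_{1,K}}$ bounded by $\mcD_{G_K}$. The self-referential estimate for $F$ is then carried out on $G_K$, where the weights $r^{-|w|}$ turn the corner-funnel energy into an explicit geometric series with ratio $r$, convergent precisely because $r\le 1-\frac1{k^2}$. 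Finally, $D_4$-symmetry is used to split $K$ into left and right halves and to keep the number and cost of the coarse crossings bounded in terms of $k$ alone.
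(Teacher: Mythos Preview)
Your proposal has a genuine gap: you conflate potentials with flows. Thomson's principle is stated correctly, but the tools you invoke (Definition~\ref{def47}, Lemma~\ref{lemma48}, Proposition~\ref{lemma49}) build \emph{functions} $g\in\mcF_{T_{1,K}}$ with $\mcE_{T_{1,K}}(g)\le C\,\mcD_{G_K}(f)$. Exhibiting a single test function with small energy yields only a \emph{lower} bound on $R(q_1,q_2)$, the wrong direction. The comparison that would transfer an upper bound from the graph $G_K$ to $K$ is Lemma~\ref{lemma413} ($\mcD_{G_K}\le C\,\mcE$), but its proof in the paper already relies on Proposition~\ref{prop412}, so invoking it here is circular.

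Even if you reinterpret the brick functions as flows via their ``gradients'', the self-referential inequality $F\le c_0+rF$ is not justified. The gradient of $h$ is a unit flow between the \emph{faces} $L_4$ and $L_2$, with sources and sinks distributed along those segments; concatenating scaled copies across levels requires matching current distributions on the interfaces, which are not single points and need not agree. Equivalently, there is no general series law $R(A,C)\le R(A,B)+R(B,C)$ for sets. And the ``top-scale cost'' $c_0$ you need is a resistance between a small set like $\Psi_1(L_2\cup L_3)$ and $L_2\cup L_3$; bounding this by the face normalization alone, without already controlling point resistances, is exactly the difficulty.

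The paper's argument avoids this circularity by a bootstrapping in the potential picture. It takes the harmonic function $h$ realizing $R(q_1,q_2)$ and uses Lemma~\ref{lemma411}, which is a \emph{comparison} $R(x,y)\le 2k^3\,R(q_1,q_2)$ rather than an absolute bound. Combined with $|h(q_1)-h(x)|^2\le R(q_1,x)\,\mcE(h)$ and $\mcE(h)=4/R(q_1,q_2)$, the unknown $R(q_1,q_2)$ cancels, forcing $h$ to be within $\tfrac12$ of its corner values on $\Psi_1^l\partial_0K$ and $\Psi_k^l\partial_0K$ for an $l$ chosen using only $r\le 1-k^{-2}$. A face-crossing comparison on the strip $T_{l,K}$ then gives $\mcE(h)\ge (kr)^{-l}\,\mcE(h')=(kr)^{-l}$, hence $R(q_1,q_2)\le 4(kr)^l$, a constant depending only on $k$.
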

\begin{proof}
    Let $h$ be the unique function on $K$ such that
    \[h(q_1)=-1,h(q_2)=1, \mcE(h)=\frac{4}{R(q_1,q_2)},\]
    i.e. $h$ is harmonic in $K\setminus \{q_1,q_2\}$. It suffices to show that $\mcE(h)\geq C\cdot\mcE(h')$, where $C>0$ depends only on $k$ and $h'$ is the unique function such that
    \[h'|_{L_4}=0, h'|_{L_2}=1, \mcE(h')=\frac{1}{R(L_2,L_4)}=1.\]

    Choose the smallest positive integer $l\geq \frac{\log 32k^3}{\log k^2-\log (k^2-1)}\geq-\frac{\log 32k^3}{\log r}$. Then by Lemma \ref{lemma411},  for any $x\in \Psi_1^l\partial_0 K$, one can see that $R(q_1,x)\leq r^lR(q_1,\Psi_1^{-l}x)\leq \frac{1}{16}R(q_1,q_2)$, and so
    \[\big|h(q_1)-h(x)\big|^2\leq R(q_1,x)\mcE(h)\leq \frac{1}{4}.\]
    As a consequence $h|_{\Psi_1^l\partial_0K}\leq -\frac{1}{2}$, and by symmetry we also have $h|_{\Psi_k^l\partial_0K}\geq \frac{1}{2}$.

    Now, let $h''\in \mcF_{T_{l,K}}$ be defined as $h''(\Psi_wx)=\frac{e(w)}{k^l}+\frac{1}{k^l}h'\circ \Psi_w(x)-\frac{1}{2}$ for each $w\in {W}_{l,1},x\in K$, where $e(w)=\sum_{l'=1}^l(w_{l'}-1)k^{l-l'}$. It is easy to see that $h''|_{\Psi_1^lL_4}=-\frac 1 2$ and $h''|_{\Psi_k^lL_2}=\frac 1 2$. We will prove that
    \begin{equation}\label{eqn31}
        \mcE(h)\geq \mcE_{T_{l,K}}(h)\geq \mcE_{T_{l,K}}(h'')=k^{-l}r^{-l}\cdot\mcE(h'),
    \end{equation}
    where the first inequality and the last equality are trivial.

    To see the second inequality in (\ref{eqn31}), we claim that \[\mcE_{T_{l,K}}(h'')=\inf\{\mcE_{T_{l,K}}(g):g|_{\Psi_1^lL_4}=-\frac{1}{2}, g|_{\Psi_k^lL_2}=\frac{1}{2},g\in \mcF_{T_{l,K}}\},\] i.e. $h''$ is harmonic in $T_{l,K}\setminus \big(\Psi_1^lL_4\cup \Psi_k^lL_2\big)$.

    First, for each $w,w'\in  W_{l,1}$ with $e(w')-e(w)=1$, one can check $h''|_{\Psi_wK\cup \Psi_{w'}K}$ is harmonic in $(\Psi_wK\cup \Psi_{w'}K)\setminus (\Psi_wL_4\cup \Psi_{w'}L_2)$. In fact, if $g$ is the unique function in $\mcF_{\Psi_wK\cup \Psi_{w'}K}$ such that  $g|_{\Psi_wL_4}=h''|_{\Psi_wL_4}=\frac{e(w)}{k^l}-\frac{1}{2}$, $g|_{\Psi_{w'}L_2}=h''|_{\Psi_{w'}L_2}=\frac{e(w)+2}{k^l}-\frac12$ and $g$ is harmonic in $(\Psi_wK\cup \Psi_{w'}K)\setminus (\Psi_wL_4\cup \Psi_{w'}L_2)$, then we can see $g+\frac{1}{2}-\frac{e(w)+1}{k^l}$ is anti-symmetric with respect to the horizontal reflection about $\Psi_wL_2$, whence $g|_{\Psi_wL_2}=\frac{e(w)+1}{k^l}-\frac{1}{2}$. It follows $h''|_{\Psi_wK\cup \Psi_{w'}K}=g$, since both $h''$ and $g$ are harmonic in $\Psi_wK\setminus (\Psi_wL_2\cup \Psi_wL_4)$ and $\Psi_{w'}K\setminus (\Psi_{w'}L_2\cup \Psi_{w'}L_4)$ and taking the same boundary values.

    Second, for each $v\in  \mcF_{T_{l,K}}$ such that $v|_{\Psi_1^lL_4\cup \Psi_k^lL_2}=0$, it is easy to find a partition $v=\sum_{w\in {W}_{l,1}:0\leq e(w)\leq k^l-2}v_w\cdot 1_{\Psi_wK\cup \Psi_{w'}K}$ where for each $0\leq e(w)\leq k^l-2$ with $e(w')-e(w)=1$, $v_w\in \mcF_{\Psi_wK\cup \Psi_{w'}K}\cap C(\Psi_wK\cup \Psi_{w'}K)$ and $v_w|_{\Psi_wL_4\cup \Psi_{w'}L_2}=0$. Thus, $\mcE_{T_{1,K}}(h'',v)=\sum_{w}\mcE_{T_{1,K}}(h'',v_w\cdot 1_{\Psi_wK\cup\Psi_{w'}K})=\sum_w\mcE_{\Psi_wK\cup\Psi_{w'}K}(h'',v_w)=0$, where the last equality holds since $h''|_{\Psi_wK\cup \Psi_{w'}K}$ is harmonic in $(\Psi_wK\cup \Psi_{w'}K)\setminus (\Psi_wL_4\cup \Psi_{w'}L_2)$ as showed in the last paragraph. This implies that $h''$ is harmonic in $T_{l,K}\setminus \big(\Psi_1^lL_4\cup \Psi_k^lL_2\big)$.

    Hence the claim holds, and thus (\ref{eqn31}) holds since $h|_{\Psi_1^l\partial_0K}\leq -\frac 12$ and $h|_{\Psi_k^l\partial_0K}\geq \frac 12$. So $\mcE(h)\geq k^{-l}r^{-l}\cdot\mcE(h')\geq \big(\frac{k}{k^2-1}\big)^{l}\cdot \mcE(h')$ since $r\leq \frac{N}{k^2}\leq \frac{k^2-1}{k^2}$. The proof is completed.
\end{proof}

\subsection{A restriction result} \label{sec43}
In this subsection, we prove a restriction result about the map $f\to f|_{L_1}$.

First, as an immediate application of Proposition \ref{prop412}, we have the following estimate.

\begin{lemma}\label{lemma413}
    There is $C>0$ depending only on $k$ such that

    (a). $\mcD_{G_{B_K}}(f)\leq C\cdot\mcE_{B_K}(f)$ for every $f\in \mcF_{B_K}$,

    (b). $\mcD_{G_K}(f)\leq C\cdot\mcE(f)$ for every $f\in \mcF$.
\end{lemma}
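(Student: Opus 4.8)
The plan is to reduce part (b) to part (a), and to prove (a) by controlling the effective resistance across each of the finitely many edges of $G_{B_K}$.

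For the reduction, I would use the scaling identity $r^{-|w|}\mcE_{B_K}(f\circ\Psi_w)=\mcE_{\Psi_wB_K}(f)$ for $w\in W_{*,1}$, which follows by writing $B_K$ as the union of the full $2$-cells $\Psi_{ij}K$ ($i\in W_{1,1}$, $j\notin W_{1,1}$) and unwinding the self-similar definition of $\mcE_{\Psi_wB_K}$. Since the bricks $\{\Psi_wB_K\}_{w\in W_{*,1}}$ tile $T_{1,K}$ with pairwise $\mu$-null overlaps, truncating at level $m$ and letting $m\to\infty$ gives, by monotone convergence together with the additivity of $\mcE$ over simple sets, the bound $\sum_{w\in W_{*,1}}\mcE_{\Psi_wB_K}(f)=\mcE_{T_{1,K}}(f)\le\mcE(f)$. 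Granting (a), I then obtain
$$\mcD_{G_K}(f)=\sum_{w\in W_{*,1}}r^{-|w|}\mcD_{G_{B_K}}(f\circ\Psi_w)\le C\sum_{w\in W_{*,1}}r^{-|w|}\mcE_{B_K}(f\circ\Psi_w)\le C\,\mcE(f),$$
which is exactly (b).

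To prove (a), I would regard $(\mcE_{B_K},\mcF_{B_K})$ as a resistance form with resistance metric $R_{B_K}$, so that $\big(f(x)-f(y)\big)^2\le R_{B_K}(x,y)\,\mcE_{B_K}(f)$ for all $f\in\mcF_{B_K}$. Summing over the $k+3$ edges of $G_{B_K}$ gives
$$\mcD_{G_{B_K}}(f)=\sum_{\{x,y\}\in E_{B_K}}\big(f(x)-f(y)\big)^2\le\Big(\sum_{\{x,y\}\in E_{B_K}}R_{B_K}(x,y)\Big)\mcE_{B_K}(f),$$
so it suffices to bound each edge resistance $R_{B_K}(x,y)$ by a constant depending only on $k$. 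Each endpoint is a corner of a $2$-cell contained in $B_K$; for two corners in a common $2$-cell $\Psi_{ij}K$, monotonicity of resistance under enlarging the ambient space and the self-similar scaling give $R_{B_K}(a,b)\le r^2R(\Psi_{ij}^{-1}a,\Psi_{ij}^{-1}b)\le r^2\cdot 2k^3R(q_1,q_2)$, which is $\le C(k)$ by Lemma \ref{lemma411} and Proposition \ref{prop412}. For the remaining edges I would connect $x$ to $y$ by a chain of cell-corners $x=z_0,\dots,z_m=y$ with each consecutive pair in a common $2$-cell of $B_K$; such a chain of length $m=O(k^2)$ exists because the left column, top row and right column of cells are present (by the symmetry and boundary-included conditions) and are connected inside $B_K$. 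The triangle inequality for $R_{B_K}$ then yields $R_{B_K}(x,y)\le\sum_{j}R_{B_K}(z_j,z_{j+1})\le m\cdot C(k)$, again depending only on $k$.

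The hard part will be the uniform upper bound on the ``long'' edges — the top span $\{\Psi_1q_4,\Psi_kq_3\}$ and the cross-cell segments $\{\Psi_{i1}q_4,\Psi_{ik}q_3\}$ — since these require producing an explicit connecting chain inside $B_K$ and a series estimate, whereas the short vertical edges are immediate from the single-cell bound. The crucial input that makes these bounds uniform over all carpets $K$ (and not merely over a fixed $K$) is the a priori estimate $R(q_1,q_2)\le C(k)$ of Proposition \ref{prop412}, together with Lemma \ref{lemma411}, which cap each cell's contribution by a constant multiple of $r^2$.
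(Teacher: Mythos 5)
Your proposal is correct and takes essentially the approach the paper intends: the paper states Lemma \ref{lemma413} without proof, as an ``immediate application'' of Proposition \ref{prop412}, and your argument --- bounding each of the finitely many edge resistances of $G_{B_K}$ via monotonicity of effective resistance, the scaling identity $R_{\Psi_{ij}K}(a,b)=r^{2}R(\Psi_{ij}^{-1}a,\Psi_{ij}^{-1}b)$, chaining through $2$-cell corners supplied by the symmetry and boundary-included conditions, and Lemma \ref{lemma411} together with Proposition \ref{prop412}, then summing over bricks for part (b) --- is precisely the standard way to fill in that omission. One minor correction: in your display for part (b) you should claim only $\sum_{w\in W_{*,1}}\mcE_{\Psi_wB_K}(f)\leq\mcE_{T_{1,K}}(f)$ rather than equality, since the equality amounts to the energy measure not charging $L_1$ (a nontrivial fact not available at this stage of the paper), whereas the inequality follows from your finite truncation argument and is all the proof needs.
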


For a function $u$ on $L_1$, we introduce the notation
\[D_n(u)=\sqrt{\sum_{l=0}^{k^n-1}\big(u(\frac{l}{k^n},0)-u(\frac{l+1}{k^n},0)\big)^2}.\]
Then, $[[u]]_{\Lambda_{2,2}^{\sigma(r)}(L_1)}=\big\|r^{-n/2}D_n(u)\big\|_{l^2}=\sqrt{\sum_{n=0}^\infty r^{-n}|D_n(u)|^2}$.

Also, for $f\in C\big(cl(V_K)\big)$, we write
\[\tilde{D}_n(f)=\sqrt{\sum_{w\in W_{n,1}}\mcD_{G_{B_K}}(f\circ F_w)}.\]
Then, $\mcD_{G_K}(f)=\sum_{n=0}^{\infty} r^{-n}|\tilde{D}_n(f)|^2$.

\begin{lemma}\label{lemma414}
    Let $f\in C\big(cl(V_K)\big)$, then we have
    \[D_n(f|_{L_1})\leq 3\cdot\sum_{m=n}^\infty\tilde{D}_m(f),\quad\text{ for all }n\geq 0.\]
\end{lemma}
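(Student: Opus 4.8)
The plan is to realise each point $(\tfrac{l}{k^n},0)\in L_1$ as the limit of an explicit vertical column of vertices in $V_K$, and to control the increments of $f$ along that column, together with one horizontal increment across the top of a level-$n$ brick, by building-brick edge-differences. Concretely, for $m\geq n$ set $v_m^{(l)}=(\tfrac{l}{k^n},\tfrac{1}{k^{m+1}})$; since $\tfrac{l}{k^n}=\tfrac{lk^{m-n}}{k^m}$, each such point is a level-$m$ vertex of $V_K$, and $v_m^{(l)}\to(\tfrac{l}{k^n},0)$ in the Euclidean metric as $m\to\infty$. As $f\in C\big(cl(V_K)\big)$ and $cl(V_K)=V_K\cup L_1$, continuity yields $f(\tfrac{l}{k^n},0)=\lim_{m\to\infty}f(v_m^{(l)})$, so by telescoping
\[
f(\tfrac{l}{k^n},0)-f(v_n^{(l)})=\sum_{m=n}^\infty\big(f(v_{m+1}^{(l)})-f(v_m^{(l)})\big)=:a_l .
\]

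Next I would split each first difference of the trace as
\[
f(\tfrac{l}{k^n},0)-f(\tfrac{l+1}{k^n},0)=a_l+\big(f(v_n^{(l)})-f(v_n^{(l+1)})\big)-a_{l+1},
\]
and match every increment here to an edge of $G_K$. The point $v_m^{(l)}$ is the top-left corner of the level-$m$ building brick lying over $[\tfrac{l}{k^n},\tfrac{l}{k^n}+k^{-m}]$ (for $l=k^n$ one instead uses the top-right corner of the rightmost brick), and the left vertical edge of that brick joins $v_m^{(l)}$ to $v_{m+1}^{(l)}$; hence each $f(v_{m+1}^{(l)})-f(v_m^{(l)})$ is a single vertical-edge term of $\mcD_{G_{B_K}}(f\circ\Psi_w)$ for the corresponding $w\in W_{m,1}$. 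Likewise $v_n^{(l)}$ and $v_n^{(l+1)}$ are the top-left and top-right corners of the level-$n$ brick over $[\tfrac{l}{k^n},\tfrac{l+1}{k^n}]$, so $f(v_n^{(l)})-f(v_n^{(l+1)})$ is exactly that brick's top-edge term.

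Then I would estimate the three groups of terms in $\ell^2(l)$ by Minkowski's inequality. For the middle terms, the bricks over $[\tfrac{l}{k^n},\tfrac{l+1}{k^n}]$, $0\le l\le k^n-1$, are pairwise distinct, so $\sum_l\big(f(v_n^{(l)})-f(v_n^{(l+1)})\big)^2\leq\sum_{w\in W_{n,1}}\mcD_{G_{B_K}}(f\circ\Psi_w)=\tilde D_n(f)^2$. For the columns, Minkowski in $m$ gives $\|(a_l)_l\|_{\ell^2}\leq\sum_{m=n}^\infty\|(f(v_{m+1}^{(l)})-f(v_m^{(l)}))_l\|_{\ell^2}$, and for each fixed $m$ the vertical edges used at the distinct positions $\tfrac{l}{k^n}$ are pairwise distinct edges of distinct level-$m$ bricks, so $\|(f(v_{m+1}^{(l)})-f(v_m^{(l)}))_l\|_{\ell^2}\leq\tilde D_m(f)$; the identical bound applies to $(a_{l+1})_l$. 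Summing the three contributions produces $D_n(f|_{L_1})\leq 3\sum_{m=n}^\infty\tilde D_m(f)$, the factor $3$ being exactly the three-term split.

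The step needing the most care — and the main obstacle — is the combinatorial bookkeeping in this last estimate: one must check, from the explicit coordinates of $V_{B_K}$ and $E_{B_K}$, that the increments $f(v_{m+1}^{(l)})-f(v_m^{(l)})$ are genuinely left/right vertical edges of level-$m$ bricks sitting over $L_1$, and that for each fixed level the edges indexed by different $l$ are all distinct, so that a sum over positions is dominated by the full brick energy $\tilde D_m(f)^2$ without double counting. Handling the boundary position $l=k^n$ (a right vertical edge rather than a left one) and noting that the inequality is trivial when $\sum_m\tilde D_m(f)=\infty$ then finishes the argument.
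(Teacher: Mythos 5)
Your proposal is correct and is essentially the paper's own argument: the paper likewise telescopes $f(\frac{l}{k^n},0)-f(\frac{l}{k^n},\frac{1}{k^{n+1}})$ along the vertical column of vertices $(\frac{l}{k^n},\frac{1}{k^{m+1}})$, splits each trace increment into the two column sums plus the horizontal (top-edge) difference at height $k^{-(n+1)}$, and applies Minkowski, with the factor $3$ coming from exactly this three-term split. The only difference is presentational — you make explicit the identification of each increment with a distinct edge of $G_K$ (including the right-edge adjustment at $l=k^n$), which the paper leaves implicit in the bound $\leq 3\cdot\sum_{m=n}^\infty\tilde{D}_m(f)$.
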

\begin{proof}
    Clearly, for $0\leq l\leq k^n$, we have
    \[
    f(\frac{l}{k^n},0)=f(\frac{l}{k^n},\frac{1}{k^{n+1}})+\sum_{m=n}^\infty \big(f(\frac{l}{k^n},\frac{1}{k^{m+2}})-f(\frac{l}{k^n},\frac{1}{k^{m+1}})\big).
    \]
    Thus by Minkowski inequality,
    \[
    \begin{aligned}
        D_n(f|_{L_1})&=\sqrt{\sum_{l=0}^{k^n-1}\big(f(\frac{l}{k^n},0)-f(\frac{l+1}{k^n},0)\big)^2}\\
        &=\sqrt{\sum_{l=0}^{k^n-1}\Big(f(\frac{l}{k^n},\frac{1}{k^{n+1}})-f(\frac{l+1}{k^n},\frac{1}{k^{n+1}})+\sum_{l'=0,1}(-1)^{l'}\sum_{m=n}^\infty \big(f(\frac{l+l'}{k^n},\frac{1}{k^{m+2}})-f(\frac{l+l'}{k^n},\frac{1}{k^{m+1}})\big)\Big)^2}\\
        &\leq\sqrt{\sum_{l=0}^{k^n-1}\big(f(\frac{l}{k^n},\frac{1}{k^{n+1}})-f(\frac{l+1}{k^n},\frac{1}{k^{n+1}})\big)^2}
        +\sum_{m=n}^{\infty}\sum_{l'=0,1}\sqrt{\sum_{l=0}^{k^n-1}\big(f(\frac{l+l'}{k^n},\frac{1}{k^{m+2}})-f(\frac{l+l'}{k^n},\frac{1}{k^{m+1}})\big)^2}\\
        &\leq 3\cdot\sum_{m=n}^\infty \tilde{D}_m(f).
    \end{aligned}
    \]
\end{proof}

Combining Lemma \ref{lemma413} and \ref{lemma414}, we can easily prove the following restriction theorem.
\begin{proposition}\label{thm415}
    There exists $C>0$ depending only on $k$ such that
    \[[[f|_{L_1}]]^2_{\Lambda_{2,2}^{\sigma(r)}(L_1)}\leq C\cdot\mcE(f) \quad\hbox{ for every } f\in \mcF.\]
\end{proposition}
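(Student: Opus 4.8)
The plan is to read off a pointwise bound for the boundary differences from Lemma \ref{lemma414}, convert it into a weighted $\ell^2$ estimate via a discrete Hardy-type inequality, and then pass from the graph energy $\mcD_{G_K}$ to $\mcE$ using Lemma \ref{lemma413}(b). If $\mcE(f)=\infty$ the inequality is trivial, so I may assume $\mcE(f)<\infty$, in which case Lemma \ref{lemma413}(b) already guarantees $\mcD_{G_K}(f)<\infty$ and hence $\sum_{n}r^{-n}|\tilde D_n(f)|^2<\infty$.

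Since every $f\in\mcF$ is continuous (Remark 1 in Section \ref{resis}), its restriction to $cl(V_K)$ lies in $C\big(cl(V_K)\big)$, so Lemma \ref{lemma414} applies and, after squaring, yields $|D_n(f|_{L_1})|^2\leq 9\big(\sum_{m=n}^\infty\tilde D_m(f)\big)^2$ for all $n\geq 0$. Recalling $[[f|_{L_1}]]^2_{\Lambda_{2,2}^{\sigma(r)}(L_1)}=\sum_{n=0}^\infty r^{-n}|D_n(f|_{L_1})|^2$ and $\mcD_{G_K}(f)=\sum_{n=0}^\infty r^{-n}|\tilde D_n(f)|^2$, the proposition reduces to the weighted Hardy inequality
\[
\sum_{n=0}^\infty r^{-n}\Big(\sum_{m=n}^\infty a_m\Big)^2\leq C_k\sum_{n=0}^\infty r^{-n}a_n^2,
\]
valid for every nonnegative sequence $a_m=\tilde D_m(f)$, with a constant $C_k$ depending only on $k$.

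To prove this I would fix $\beta=1-\frac{1}{2k^2}$. By Lemma \ref{lemma410} we have $r\leq 1-\frac{1}{k^2}<\beta<1$, hence $r/\beta<1$ and $\beta-r\geq\frac{1}{2k^2}$. Splitting $a_m=(r^{-m/2}\beta^{m/2}a_m)(r^{m/2}\beta^{-m/2})$ and applying Cauchy--Schwarz gives
\[
\Big(\sum_{m\geq n}a_m\Big)^2\leq\Big(\sum_{m\geq n}r^{-m}\beta^m a_m^2\Big)\Big(\sum_{m\geq n}(r/\beta)^m\Big)\leq\frac{(r/\beta)^n}{1-r/\beta}\sum_{m\geq n}r^{-m}\beta^m a_m^2.
\]
Multiplying by $r^{-n}$, summing over $n$, and interchanging the order of summation (all terms are nonnegative), I use $r^{-n}(r/\beta)^n=\beta^{-n}$ and $\sum_{n=0}^m\beta^{-n}\leq\frac{\beta^{-m}}{1-\beta}$ so that the geometric factors telescope and the inner weights $\beta^m\beta^{-m}$ cancel, leaving
\[
\sum_{n=0}^\infty r^{-n}\Big(\sum_{m\geq n}a_m\Big)^2\leq\frac{1}{(1-r/\beta)(1-\beta)}\sum_{m=0}^\infty r^{-m}a_m^2.
\]
Since $1-\beta=\frac{1}{2k^2}$ and $1-r/\beta=\frac{\beta-r}{\beta}\geq\beta-r\geq\frac{1}{2k^2}$, the constant satisfies $C_k\leq 4k^4$, depending only on $k$.

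Assembling the pieces gives $[[f|_{L_1}]]^2_{\Lambda_{2,2}^{\sigma(r)}(L_1)}\leq 9\,C_k\,\mcD_{G_K}(f)\leq 9\,C_k\,C'\,\mcE(f)$, where $C'$ is the ($k$-dependent) constant of Lemma \ref{lemma413}(b), which proves the claim. The main obstacle is the uniformity of the Hardy constant in the renormalization factor: because $r$ is \emph{a priori} form-dependent, it is essential to exploit the explicit upper bound $r\leq 1-\frac{1}{k^2}$ from Lemma \ref{lemma410} in order to choose $\beta$, and hence $C_k$, in terms of $k$ alone; everything else is routine.
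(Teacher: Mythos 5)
Your proof is correct, and it shares the paper's skeleton: both start from Lemma \ref{lemma414} to dominate $D_n(f|_{L_1})$ by the tails $\sum_{m\geq n}\tilde{D}_m(f)$, both finish with Lemma \ref{lemma413}(b), and both invoke Lemma \ref{lemma410} ($r\leq N/k^2\leq 1-k^{-2}$) to make the constant depend on $k$ alone --- which, as you rightly note, is the only delicate point. Where you differ is the middle summation estimate. The paper never squares: it keeps the $\ell^2$-norm in $n$, shifts the index to write $\sum_{m\geq n}r^{-n/2}\tilde{D}_m(f)=\sum_{j\geq 0}r^{j/2}\bigl(r^{-(n+j)/2}\tilde{D}_{n+j}(f)\bigr)$, applies Minkowski's inequality in $\ell^2$, and bounds each shifted norm by $\mcD_{G_K}^{1/2}(f)$, producing the factor $3(1-r^{1/2})^{-1}$ in four lines. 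You instead square first, isolate the weighted discrete Hardy inequality $\sum_n r^{-n}\bigl(\sum_{m\geq n}a_m\bigr)^2\leq C_k\sum_n r^{-n}a_n^2$, and prove it by Cauchy--Schwarz against the auxiliary weight $\beta=1-\tfrac{1}{2k^2}$ together with Tonelli. Your route is more elementary (only Cauchy--Schwarz and an interchange of nonnegative sums, no norm inequality in sequence space) at the cost of introducing the splitting parameter $\beta$ wedged between $r$ and $1$; the paper's is shorter, and the resulting constants even agree in order: $9(1-r^{1/2})^{-2}\leq 36k^4$ versus your $9\cdot 4k^4$. Both are standard proofs of the same convolution-type bound, so the divergence is one of technique rather than substance, and your argument is a fully valid substitute.
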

\begin{proof}
    The proposition follows from the following inequality,
    \[\begin{aligned}
        \quad[[f|_{L_1}]]_{\Lambda_{2,2}^{\sigma(r)}(L_1)}&=\big\|r^{-n/2}D_n(f|_{L_1})\big\|_{l^2}\\
        &\leq 3\cdot\big\|\sum_{m=n}^\infty r^{-n/2}\tilde{D}_m(f)\big\|_{l^2}\\
        &=3\cdot\big\|\sum_{m=0}^\infty r^{m/2}r^{-(m+n)/2}\tilde{D}_{m+n}(f)\big\|_{l^2} \\
        &\leq 3\cdot\sum_{m=0}^\infty r^{m/2}\|r^{-(m+n)/2}\tilde{D}_{m+n}(f)\big\|_{l^2}\leq \frac{3}{1-r^{1/2}}\mcD^{1/2}_{G_K}(f)\leq C'\cdot\mcE^{1/2}(f),
    \end{aligned}\]
    where we use Lemma \ref{lemma414} in the second line and use Lemma \ref{lemma413} (b) in the last inequality. Here $C'$ depends only on $k$ noticing that $r\leq \frac{N}{k^2}$.
\end{proof}

\subsection{Proof of Theorem \ref{thm43}}
Finally, we finish this section with the proof of Theorem \ref{thm43}.

\begin{proof}[Proof of Theorem \ref{thm43}]
    We consider the $n=0$ case only, since the $n\geq 1$ case follows immediately from it with the self-similar property.

    First, by Proposition \ref{thm415}, one can see $\mathcal{H}_0|_{\partial_0K}\subset\Lambda_{2,2}^{\sigma(r)}(\partial_0K)$, and there is $C_1>0$ depending only on $k$ so that  for any $h\in \mathcal H_0$,
    \begin{equation}\label{eqn65add1}
        [[h|_{\partial_0K}]]^2_{\Lambda_{2,2}^{\sigma(r)}(\partial_0K)}\leq C_1\cdot \mcE(h).
    \end{equation}

    For the other direction, it suffices to show that for any $u\in \Lambda_{2,2}^{\sigma(r)}(\partial_0K)$, there is $f\in \mcF$ such that
    \[\begin{cases}
        f|_{\partial_0 K}=u,\\
        \mcE(f)\leq C_2\cdot [[u]]^2_{\Lambda_{2,2}^{\sigma(r)}(\partial_0K)}
    \end{cases}\]
    for some $C_2>0$ depending only on $k$.
    To achieve this, we apply the construction in Proposition \ref{lemma49}. We take two steps to construct a desired $f$. \vspace{0.2cm}

    \textit{Step 1. We  find $f'\in \mcF$ such that $f'|_{L_1}=u|_{L_1}$, $f'|_{L_3}=u|_{L_3}$, and
        \begin{equation}\label{eqn65add2}
            \mcE(f')\leq C_3\cdot [[u]]^2_{\Lambda_{2,2}^{\sigma(r)}(\partial_0K)},
        \end{equation}
        for some $C_3>0$ depending only on $k$. }
    \vspace{0.2cm}

    To complete \textit{Step 1}, we first use Proposition \ref{lemma49} to construct $f'$ on $T_{1,K}$ and $\Gamma_vT_{1,K}$ \big(recall $\Gamma_v(x_1,x_2)=(x_1,1-x_2)$ as defined in (\ref{eqn21})\big) satisfying $f'|_{L_1}=u|_{L_1}$, $f'|_{L_3}=u|_{L_3}$. Then for the middle part, we take
    \[
    \begin{aligned}
        f'(x)=&u(q_1)\big(1-h(x)\big)h''(x)+u(q_2)h(x)h''(x)\\
        &+u(q_3)h(x)\big(1-h''(x)\big)+u(q_4)\big(1-h(x)\big)\big(1-h''(x)\big)\quad \hbox{ for }x\in K\setminus \big(T_{1,K}\cup \Gamma_vT_{1,K}\big),
    \end{aligned}
    \]
    where $h$ is the same function defined in Definintion \ref{def47}, which satisfies $h\in\mcF$, $h|_{L_4}=0$ and $h|_{L_2}=1$; for $h''$, as usual one can use scaled copies of $h\circ \Gamma_{r_1}$ to build $h''\in \mcF$ such that $h''|_{T_{1,K}}=1,h''|_{\Gamma_vT_{1,K}}=0$. Clearly, $f'\in\mcF$ and satisfies the desired energy estimate. \vspace{0.2cm}

    Now, let $u'=u-f'|_{\partial_0K}$. We have
    \begin{equation}\label{eqn65add3}
        [[u']]_{\Lambda_{2,2}^{\sigma(r)}(\partial_0K)}\leq [[u]]_{\Lambda_{2,2}^{\sigma(r)}(\partial_0K)}+[[f'|_{\partial_0K}]]_{\Lambda_{2,2}^{\sigma(r)}(\partial_0K)}\leq C_4\cdot[[u]]_{\Lambda_{2,2}^{\sigma(r)}(\partial_0K)}
    \end{equation}
    for some $C_4>0$ depending only on $k$,
    where in the last inequality we use a combination of (\ref{eqn65add1}) and (\ref{eqn65add2}). In additon, we see that
    \[u'|_{L_1\cup L_3}=0.\]

    \vspace{0.2cm}
    \textit{Step 2. We construct $f''\in \mcF$ such that $f''|_{\partial_0 K}=u'$ and $\mcE(f'')\leq C_3\cdot[[u']]^2_{\Lambda_{2,2}^{\sigma(r)}(\partial_0K)}$. }
    \vspace{0.2cm}

    This can be fulfilled with a same argument as Step 1. First, we construct $f''$ on $\Gamma_{d_1}T_{1,K}$ and $\Gamma_{d_2}T_{1,K}$ using Proposition \ref{lemma49}, so that $f''|_{L_2\cup L_4}=u'|_{L_2\cup L_4}$ \big(recall $\Gamma_{d_1},\Gamma_{d_2}$ in (\ref{eqn21})\big). Then we extend $f''$ to $K$ with $0$ continuously since $f''|_{\bigcup_{i\in W_{1,4}}\Psi_{i}L_2}=f''|_{\bigcup_{i\in W_{1,2}}\Psi_{i}L_4}=0$. The energy estimate  \begin{equation}\label{eqn65add4}
        \mcE(f'')\leq C_3\cdot [[u']]^2_{\Lambda_{2,2}^{\sigma(r)}(\partial_0K)}
    \end{equation}
    is same as in Step 1, and $f''|_{L_1\cup L_3}=0$ is guaranteed by the construction. \vspace{0.2cm}

    Finally, we take $f=f'+f''$. We then have $f|_{\partial_0K}=u$, and by combining (\ref{eqn65add2})-(\ref{eqn65add4}), $\mcE(f)\leq C_2\cdot [[u]]^2_{\Lambda_{2,2}^{\sigma(r)}(\partial_0 K)}$ for some $C_2>0$ depending only on $k$.
\end{proof}

\section{An estimate of effective resistances}\label{sec5}
In this section, we establish an estimate of the resistance metric $R(\cdot,\cdot)$. We assume a same basic setting as in Section \ref{sec4}.\vspace{0.2cm}

\noindent\textbf{{Basic Setting in Section \ref{sec5}.}}
Let $K$ be a $\USC$ and $(\mcE,\mcF)$ be a self-similar $D_4$-symmetric resistance form on $K$. We assume that the associated resistance metric $R$  is jointly continuous on $K\times K$, and without loss of generality,
$R(L_2,L_4)=1$. \vspace{0.2cm}

We consider the \textit{geodesic metric} $d_G$ on $K$ in this section, which is defined as
\[d_G(x,y)=\inf\big\{length(\gamma): \gamma:[0,1]\to K \text{ is a rectifiable curve, } \gamma(0)=x, \gamma(1)=y\big\}, \]
for all $x,y$ in $K$. We point out that $d_G\asymp d$, with the constant depending only on $K$.

\begin{lemma}\label{lemma51}
    There exists $C\in (0,\infty)$ depending only on $K$ (more precisely, on $c_0$ of (A3)) such that
    \[d(x,y)\leq d_G(x,y)\leq C\cdot d(x,y)\hbox{ for every } x,y\in K.\]
\end{lemma}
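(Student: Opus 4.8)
The plan is to prove the two inequalities separately. The lower bound $d(x,y)\le d_G(x,y)$ is immediate: any rectifiable curve $\gamma\subset K$ joining $x$ and $y$ has Euclidean length at least $d(x,y)$, the length of the straight segment $\overline{x,y}$, and $d_G(x,y)$ is the infimum of such lengths. For the upper bound the strategy has two stages. First I would establish that the \emph{geodesic diameter} $M:=\sup_{x,y\in K}d_G(x,y)$ is finite; then, given $x\ne y$, I would use (A3) to join them through a chain of three level-$n$ cells at the scale $k^{-n}\asymp d(x,y)$, bounding each piece by the scaled diameter of a single cell.

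The finiteness of $M$ is the main obstacle, because by self-similarity bounding $d_G$ inside one level-$1$ cell is exactly the problem of bounding $M$, so (A3) alone is circular. I would break this using the explicit segment structure. The key observation is that if $\Psi_iK\cap\Psi_jK\ne\emptyset$, then since the two cells have disjoint interiors their intersection lies on $\partial(\Psi_i\square)=\Psi_i\partial_0K$ and on $\Psi_j\partial_0K$; hence the boundary network $\partial_1K=\bigcup_{i}\Psi_i\partial_0K$ is a connected union of line segments (connectivity of the adjacency graph is (A2), and $\partial_0K\subset\partial_1K$ since the boundary cells tile $\partial_0 K$), of total length at most $4N/k$. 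A connected $1$-complex has geodesic diameter bounded by its total length, so $M_1:=\diam_{d_G}(\partial_1K)\le 4N/k<\infty$. Now for $x\in K$ with address $w_1w_2\cdots$, I would pick $x_0\in\partial_0K$ and $x_n\in\Psi_{w_1\cdots w_n}\partial_0K\subset\partial_nK$: since $x_n$ and $x_{n+1}$ both lie in the connected network $\Psi_{w_1\cdots w_n}\partial_1K$, whose geodesic diameter is $k^{-n}M_1$, one has $d_G(x_n,x_{n+1})\le k^{-n}M_1$; as $x_n\to x$ the telescoping sum yields $d_G(x,\partial_0K)\le\sum_{n\ge 0}k^{-n}M_1=\tfrac{k}{k-1}M_1$. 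Combined with $\diam_{d_G}(\partial_0K)\le 2$, this gives $M<\infty$, with a bound depending only on $k$ and $N$.

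With $M<\infty$ in hand, fix $x\ne y$. If $d(x,y)\ge c_0/k$, then trivially $d_G(x,y)\le M\le(Mk/c_0)\,d(x,y)$. Otherwise I would choose $n\ge 1$ with $d(x,y)\in[c_0k^{-n-1},c_0k^{-n})$ and apply (A3)(a) to obtain $w,w',w''\in W_n$ with $x\in\Psi_wK$, $y\in\Psi_{w'}K$, together with touching points $p\in\Psi_wK\cap\Psi_{w''}K$ and $q\in\Psi_{w'}K\cap\Psi_{w''}K$. Each of the pairs $\{x,p\}$, $\{p,q\}$, $\{q,y\}$ lies inside a single level-$n$ cell, and restricting to curves inside that cell (which is a similar copy of $K$ of ratio $k^{-n}$) gives $d_G$-distance at most $k^{-n}M$ for each pair. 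Hence $d_G(x,y)\le 3k^{-n}M\le(3Mk/c_0)\,d(x,y)$, using $d(x,y)\ge c_0k^{-n-1}$. Taking $C=3Mk/c_0$ completes the proof; since $M$ was bounded in terms of $k$ and $N$ only, $C$ depends only on $k,N,c_0$, hence only on $K$, and for fixed $k,N$ only on $c_0$.
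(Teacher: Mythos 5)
Your proposal is correct and takes essentially the same route as the paper: the paper's key observation is exactly your telescoping bound $d_G(x,\partial_0K)\leq C'$ obtained by walking through the scaled boundary networks $\Psi_{w_1\cdots w_n}\partial_1K$ (whose total length is $\leq 4Nk^{-n-1}$), followed by the (A3) three-cell chain at scale $k^{-n}\asymp d(x,y)$. The only cosmetic difference is in the final step, where you bound the three hops by the scaled global geodesic diameter $k^{-n}M$ using touching points $p,q$, while the paper moves $x,y$ to boundary points $z_x\in\Psi_w\partial_0K$, $z_y\in\Psi_{w'}\partial_0K$ and connects them along the three squares' boundaries with the bound $12k^{-n}$; the two estimates are interchangeable.
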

\begin{proof}
    We first observe that
    \begin{equation}\label{eqn51}
        d_G(x,\partial_0K)\leq C'\quad\hbox{ for every } x\in K,\text{ for some $C'>0$ depending only on $k$}.
    \end{equation}
    In fact, a very rough chain argument gives that $C'\leq 4Nk^{-1}+4Nk^{-2}+\cdots$, where the first term $4Nk^{-1}$ is greater than the total length of $\partial_1K$.

    Now, given $x,y\in K$, we choose $n=0\vee\min\{m\in \mathbb{Z}:d(x,y)<c_0k^{-m}\}$, where $c_0$ is the constant in (A3). Then, by (A3), we can find $w,w'',w'\in W_{n}$ so that $x\in \Psi_wK$, $y\in \Psi_{w'}K$ and $\Psi_wK\cap \Psi_{w''}K\neq \emptyset$, $\Psi_{w'}K\cap \Psi_{w''}K\neq \emptyset$. By (\ref{eqn51}), there is $z_x\in \Psi_w(\partial_0K)$ so that $d_G(x,z_x)\leq C'k^{-n}$; there is $z_y\in \Psi_{w'}(\partial_0K)$ so that $d_G(y,z_y)\leq C'k^{-n}$. Thus, $d_G(x,y)\leq d_G(x,z_x)+d_G(z_x,z_y)+d_G(z_y,y)\leq 2C'k^{-n}+12k^{-n}$. The lemma follows immediately by the choice of $n$.
\end{proof}

The following theorem is the main result in this section.

\begin{theorem}\label{thm52}
    Let $\theta=-\frac{\log r}{\log k}$. Then there are $C_1,C_2>0$ depending only on $k$ such that
    \[
    C_1\cdot d_G(x,y)^{\theta}\leq R(x,y)\leq C_2\cdot d_G(x,y)^{\theta}\quad\hbox{ for every } x,y\in K,
    \]
    \[
    C_1\cdot \rho^{\theta}\leq R\big(x,K\setminus B^{(G)}_\rho(x)\big)\leq C_2\cdot \rho^{\theta}\quad\hbox{ for every } x\in K, 0<\rho\leq 1,
    \]
    where $B^{(G)}_\rho(x)=\{y\in K:d_G(x,y)<\rho\}$.
\end{theorem}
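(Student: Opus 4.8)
The plan is to reduce the four inequalities to a $k$-uniform resistance-diameter bound, a bounded-geometry count of cells, and the trace theorem. First I would show $\diam_R(K):=\sup_{x,y\in K}R(x,y)\le C_0$ with $C_0$ depending only on $k$. Since $R$ is jointly continuous on the compact $K\times K$, the quantity $M:=\sup_{x\in K,\,z\in\partial_0K}R(x,z)$ is finite; writing $x\in\Psi_iK$ and using the self-similar scaling (resistance from a point to its own cell boundary is at most $r\,\diam_R(K)$) together with a chain of at most $N$ adjacent $1$-cells reaching $\partial_0K$ (connectivity (A2)), whose steps are controlled by Lemma \ref{lemma411} and Proposition \ref{prop412}, gives $M\le rM+(Nr+1)\cdot 2k^3R(q_1,q_2)$. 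As $1-r\ge k^{-2}$ by Lemma \ref{lemma410} and $N\le k^2$, this yields $M\le C_0(k)$, hence $\diam_R(K)\le 2M\le C_0(k)$. The second input is purely geometric: using (A1)--(A3), a set of $d$-diameter $\asymp k^{-n}$ meets only $O_k(1)$ cells of $W_n$, each of $R$-diameter at most $r^n\diam_R(K)$ and $d_G$-diameter at most $k^{-n}\diam_{d_G}(K)$, with $\diam_{d_G}(K)\le C(k)$ by Lemma \ref{lemma51}.

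For the upper bounds I would choose $n$ with $k^{-n}\asymp d_G(x,y)$ and a curve from $x$ to $y$ of length close to $d_G(x,y)$. Its $d$-diameter is $\lesssim k^{-n}$, so it meets only $O_k(1)$ cells of $W_n$, forming a connected chain; inserting one intersection point between consecutive cells and bounding the resistance across each cell by $r^n\diam_R(K)$ gives $R(x,y)\le O_k(1)\,r^n$. Since $r^n=(k^{-n})^\theta$ and $d\le d_G$, this is $\le C_2\,d_G(x,y)^\theta$ with $C_2=C_2(k)$, proving the first upper bound. The second upper bound then follows by picking $y$ on the geodesic sphere $\{z:d_G(x,z)=\rho\}$ (nonempty for $0<\rho\le 1$), since $R\big(x,K\setminus B^{(G)}_\rho(x)\big)\le R(x,y)\le C_2\rho^\theta$.

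The heart of the proof is the lower bound $R\big(x,K\setminus B^{(G)}_\rho(x)\big)\ge C_1\rho^\theta$, for which I would build a bump function using Theorem \ref{thm43}. Fix $n$ with $k^{-n}\le \rho/\big(4\diam_{d_G}(K)\big)$ and let $g(z)=\min\{1,\max\{0,(4/\rho)(d_G(x,z)-\tfrac{\rho}{4})\}\}$, a $(4/\rho)$-Lipschitz (in $d_G$) cutoff equal to $0$ on $B^{(G)}_{\rho/4}(x)$ and to $1$ off $B^{(G)}_{\rho/2}(x)$. Set $u=g|_{\partial_nK}$ and let $f\in\mathcal H_n$ be the harmonic extension provided by Theorem \ref{thm43}. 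Because $g$ is constant on the boundary of every cell disjoint from the annulus $\{\tfrac{\rho}{4}\le d_G(x,\cdot)\le\tfrac{\rho}{2}\}$, $f$ equals that same constant on such a cell; the choice of $n$ forces the cell containing $x$ into $B^{(G)}_{\rho/4}(x)$ and every cell meeting $K\setminus B^{(G)}_\rho(x)$ off $B^{(G)}_{\rho/2}(x)$, so $f(x)=0$ and $f\equiv1$ on $K\setminus B^{(G)}_\rho(x)$. For the energy, Theorem \ref{thm43} gives $\mcE(f)\le C_2[[u]]^2_{\Lambda_{2,2}^{\sigma(r)}(\partial_nK)}$, and only the $O_k(1)$ transition cells contribute. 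On each segment $\Psi_wL_i$ the restriction of $u\circ\Psi_w$ has Lipschitz constant $\asymp k^{-n}/\rho\asymp 1$ (the straight segment has $d_G$ bounded by its Euclidean length), so a direct computation with Definition \ref{def41}, summing $\sum_m(rk)^{-m}<\infty$ using $rk\ge 2$ from Lemma \ref{lemma410}, bounds each segment's seminorm by $(k^{-n})^{-\theta}\cdot O(1)=r^{-n}O(1)$. Hence $[[u]]^2\le O_k(1)\,r^{-n}$, $\mcE(f)\le C\rho^{-\theta}$, and therefore $R\big(x,K\setminus B^{(G)}_\rho(x)\big)\ge C_1\rho^\theta$.

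Finally, the first lower bound follows by monotonicity of resistance to a set: since $y\in K\setminus B^{(G)}_{d_G(x,y)}(x)$, we get $R(x,y)\ge R\big(x,K\setminus B^{(G)}_{d_G(x,y)}(x)\big)\ge C_1 d_G(x,y)^\theta$ for $d_G(x,y)\le 1$, and the range $d_G(x,y)>1$ is absorbed into $C_1$ using $\diam_{d_G}(K)\le C(k)$ and $R(x,y)\ge R\big(x,K\setminus B^{(G)}_1(x)\big)\ge C_1$. I expect the main obstacle to be the lower bound: ensuring the bump-function energy estimate is uniform across carpets (depending only on $k$), which rests on the $k$-uniform cell count near $x$ and, crucially, on the matching Besov seminorm bound of Theorem \ref{thm43} together with the inequality $r\ge 2/k$ of Lemma \ref{lemma410}; by contrast the upper bounds and the diameter estimate are comparatively routine chaining arguments.
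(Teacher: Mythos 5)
Your proposal is correct and takes essentially the same approach as the paper: uniform boundary resistance bounds (Lemma \ref{lemma411}, Proposition \ref{prop412}) combined with cell-chaining along a near-geodesic give the upper bound $R(x,y)\le C_2 d_G(x,y)^\theta$, a bump function built via the trace theorem (Theorem \ref{thm43}) from a $d_G$-Lipschitz cutoff whose Besov seminorm lives on only $O_k(1)$ cells gives the lower bound on $R\big(x,K\setminus B^{(G)}_\rho(x)\big)$, and monotonicity of effective resistance yields the remaining two inequalities, exactly as in the paper (your two-level cutoff even avoids the paper's $3/4$--$1/4$ normalization step). The one point needing care is in your diameter recursion: the parenthetical bound $r\,\diam_R(K)$ for the point-to-own-cell-boundary resistance must be sharpened to $rM$ (which the same scaling does give, because $\Psi_i^{-1}$ maps cell-boundary points into $\partial_0K$, so the rescaled pair falls under the definition of $M$), since using only $\diam_R(K)\le 2M$ would produce $M\le 2rM+C$, which fails to close when $r\ge\tfrac12$ (recall $r\ge 2/k$ and $r$ may be as large as $1-k^{-2}$); the recursion $M\le rM+C$ that you actually display is the right one.
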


\begin{proof}
    It suffices to show $R(x,y)\leq C_2\cdot d_G(x,y)^{\theta}$ and $C_1\cdot \rho^{\theta}\leq R\big(x,K\setminus B^{(G)}_\rho(x)\big)$ for some $C_1,C_2>0$ depending only on $k$.\vspace{0.2cm}

    \textit{Proof of ``$R(x,y)\leq C_2\cdot d_G(x,y)^{\theta}$''}. First, by Lemma \ref{lemma411} and Proposition \ref{prop412}, $R(x',y')\leq C_3$ for every $x',y'\in \partial_0K$, where $C_3>0$ depends only on $k$. Then, by a routine chain argument and an argument of extension by continuity as the proof of Lemma \ref{lemma411}, $R(x',y')\leq C_4$ for every $x',y'\in K$, where $C_4>0$ depends only on $k$.  As a consequence, $R(x,y)\leq C_4\cdot r^n$ if there is $w\in W_n$ such that $\{x,y\}\subset \Psi_wK$.

    Now, we consider $x,y$ in the statement. We choose $n$ such that $k^{-n-1}<d_G(x,y)\leq k^{-n}$, and $w,w'\in W_n$ such that $x\in \Psi_wK, y\in \Psi_{w'}K$. Noticing that $d(\cdot,\cdot)\leq d_G(\cdot,\cdot)$, there are at most $C_5=[\pi(1+\sqrt{2})^2]$ many $w''\in W_n$ such that $d_G(x,\Psi_{w''}K)\leq k^{-n}$, since $\Psi_{w''}\square\subset B\big(x,k^{-n}(1+\sqrt{2})\big)$ (a ball centered at $x$ with respect to $d$) for any such $w''$. Since the geodesic path between $x,y$ will only intersect cells $\Psi_{w''}K$ such that $d_G(x,\Psi_{w''}K)\leq d_G(x,y)\leq k^{-n}$, we can find a chain $w=w^{(1)},w^{(2)},\cdots, w^{(l)}=w'$ in $W_n$ such that
    \[
    l\leq C_5,\quad  \Psi_{w^{(l')}}K\cap \Psi_{w^{(l'-1)}}K\neq \emptyset\quad\hbox{ for every }1< l'\leq l.
    \]
    Thus, we get $R(x,y)\leq C_4C_5\cdot r^n\leq C_2\cdot d_G(x,y)^{\theta}$ for some $C_2>0$ depending only on $k$.\vspace{0.2cm}

    \textit{Proof of ``$C_1\cdot \rho^{\theta}\leq R\big(x,K\setminus B^{(G)}_\rho(x)\big)$''}. The proof is based on Theorem  \ref{thm43} and the observation (\ref{eqn51}). We choose $n$ so that $k^{-n}\leq  C_6^{-1}\cdot\frac{\rho}{4}<k^{-n+1}$, where $C_6=C'+2$ and $C'$ is the constant in (\ref{eqn51}). Define $u\in C(\partial_nK)$ to be
    \[u(y)=\big(1-\rho^{-1} d_G(x,y)\big)\vee 0\quad\hbox{ for every } y\in \partial_nK.\]
    Clearly, $u$ is Lipschitz on each edge $\Psi_wL_i$, $w\in W_n$, $i\in\{1,2,3,4\}$, i.e.  $|u(y)-u(y')|\leq \rho^{-1}d(y,y')$ for every $y,y'\in \Psi_wL_i$.

    Let $h\in \mathcal{H}_n$ such that $h|_{\partial_nK}=u$ as in Theorem \ref{thm43}. Then we have\vspace{0.2cm}

    (a). $h(x)\geq \frac{3}{4}$ and $h|_{K\setminus B^{(G)}_\rho(x)}\leq \frac{1}{4}$.

    (b). $h|_{\Psi_wK}=0$ for any $w\in W_n$ such that $\Psi_wK\cap B^{(G)}_\rho(x)=\emptyset$. In particular, $h|_{\Psi_wK}$ is nonzero on at most $C_7=\big[\pi (4C_6k+\sqrt{2})^2\big]+1$ many cells $\Psi_wK$ with $w\in W_n$.
    \vspace{0.2cm}

    In fact, let $x\in F_uK$ for some $u\in W_n$. Then for any $y\in\Psi_{u}(\partial_0K)$, by (\ref{eqn51}), $d_G(x,y)\leq C_6k^{-n}\leq \frac{\rho}{4}$. So $u(y)\geq \frac 34$, and it follows $h(x)\geq\frac 34$. On the other hand, For any $y\in K\setminus B_\rho^{(G)}(x)$, it follows that $y\in F_{u'}K$ for some $u'\in W_n$ and $d_G(x, F_{u'}\big(\partial_0K)\big)\geq d_G(x,y)-d_G\big(y,F_{u'}(\partial_0K)\big)-2k^{-n}\geq \rho-C_6k^{-n}\geq \frac{3\rho}{4}$. This gives that $h|_{K\setminus B_\rho^{(G)}(x)}\leq\frac14$. Thus (a) follows. (b) is trivial as each $\Psi_wK$ with $\Psi_wK\cap B_\rho^{(G)}(x)\neq\emptyset$ is contained in $B(x,\rho+\sqrt{2}k^{-n})$.

    Thus, by (b) and Theorem \ref{thm43},
    \[\mcE(h)\leq C_8\cdot \sum_{w\in W_n}r^{-n}[[(h\circ \Psi_w)|_{\partial_0 K}]]^2_{\Lambda_{2,2}^{\sigma(r)}(\partial_0 K)}\leq C_9\cdot r^{-n},\]
    for some $C_8,C_9>0$ depending only on $k$. This together with (a) yields the desired estimate.
\end{proof}

\section{Uniqueness}\label{sec6}
In this section, we prove the uniqueness of the self-similar
$D_4$-symmetric resistance forms on $\USC$'s. The proof
is an application of the heat kernel estimate. To be more precise,
let $\mu$ be the normalized Hausdorff measure on $K$, in
other words, $\mu$ the unique probability measure determined by
$\mu=\frac 1 N\sum_{i=1}^N\mu\circ \Psi_i^{-1}$, then $(\mcE,\mcF)$
becomes a regular Dirichlet form on $L^2(K,\mu)$. Let $p_t(x,y)$ be
the \textit{heat kernel} associated with the heat operator $P_t$ of
a Dirichlet form $(\mcE,\mcF)$, i.e. $\{P_t\}_{t>0}$ is the unique
strongly continuous semigroup on $L^2(K,\mu)$ such that
\[\mcE(f)=\lim\limits_{t\to 0}\frac{1}{t}\langle(1-P_t)f,f\rangle_{L^2(K,\mu)},\]
where $\langle f,g\rangle_{L^2(K,\mu)}=\int_K
f(x)g(x)\mu(dx)$. The following \textit{heat kernel estimate},
\begin{equation}\label{eqn61}
    \begin{aligned}
        c_1\cdot &t^{-d_H/d_W}\exp\big(-c_2\cdot (\frac{d(x,y)^{d_W}}{t})^{\frac{1}{d_W-1}}\big)\leq p_t(x,y)\\
        &\leq c_3\cdot t^{-d_H/d_W}\exp\big(-c_4\cdot (\frac{d(x,y)^{d_W}}{t})^{\frac{1}{d_W-1}}\big)\hbox{ for every } 0<t\leq 1, x,y\in K
    \end{aligned}
\end{equation}
holds for some constants $c_1,c_2,c_3,c_4>0$, where  $d_H=\frac{\log N}{\log k}$ is the Hausdorff dimension of $K$ with respect to $d$, $d_W=-\frac{\log r}{\log k}+d_H$ is named \textit{walk dimension} in various contents \cite{B,GHL}. For short, we will write $\theta=-\frac{\log r}{\log k}$ as in Theorem \ref{thm52}.

There have been many deep works on the relation of resistance estimate and heat kernel estimate in various setting of metric measure spaces \cite{B,BCK,FHK,HK,ki4}. In particular, by applying  \cite[Theorems 15.10 and 15.11]{ki4} by Kigami, (\ref{eqn61}) is an immediate consequence of  the resistance estimate in Theorem \ref{thm52}.
\begin{proposition}\label{prop62}
    Let $K$ be a $\USC$ and $(\mcE,\mcF)$ be a self-similar $D_4$-symmetric resistance form on $K$ such that the associated resistance metric $R$ is jointly continuous on $K\times K$ and $R(L_2,L_4)=1$. Let $\theta=-\frac{\log r}{\log k}$, $d_H=\frac{\log N}{\log k}$ and $d_W=\theta+d_H$. We have (\ref{eqn61}) holds for some $c_1,c_2,c_3,c_4>0$ depending only on $K$.
\end{proposition}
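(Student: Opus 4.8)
The plan is to treat \cite[Theorems 15.10 and 15.11]{ki4} as a black box and simply verify their hypotheses, since those theorems are designed precisely to convert a resistance estimate together with volume information into sub-Gaussian heat kernel bounds. The two inputs they require are an Ahlfors-regularity statement for $\mu$ and a point-to-annulus resistance estimate; the latter is exactly the second display of Theorem \ref{thm52}. A crucial point is that the estimate (\ref{eqn61}) is phrased in the Euclidean metric $d$ rather than in the resistance metric $R$: this is legitimate precisely because property (A3), identified in Section \ref{sec2} as the statement that ``the Euclidean metric is $2$-adapted,'' is the adaptedness hypothesis under which Kigami's theory produces estimates in $d$. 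So (A3) is itself one of the hypotheses I would cite.

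First I would establish that $(K,d,\mu)$ is Ahlfors $d_H$-regular, i.e.
\[
c\,\rho^{d_H}\leq \mu\big(B_d(x,\rho)\big)\leq C\,\rho^{d_H}\qquad\text{for all }x\in K,\ 0<\rho\leq 1,
\]
with $d_H=\frac{\log N}{\log k}$. This is standard for the self-similar measure on a self-similar set satisfying the open set condition (A1). For $\rho\asymp k^{-n}$ the upper bound follows because, using $d_G\asymp d$ (Lemma \ref{lemma51}) and the finite-intersection geometry of the cells, a ball $B_d(x,\rho)$ meets only a bounded number of $n$-cells, each of mass $N^{-n}\asymp\rho^{d_H}$; the lower bound follows because such a ball contains at least one full $n$-cell.

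Next I would record the resistance estimate in the normalization Kigami uses. By Theorem \ref{thm52} and $d_G\asymp d$, the balls $B^{(G)}_\rho(x)$ and $B_d(x,\rho)$ are comparable, so
\[
C_1'\,\rho^{\theta}\leq R\big(x,K\setminus B_d(x,\rho)\big)\leq C_2'\,\rho^{\theta}\qquad\text{for all }x\in K,\ 0<\rho\leq 1,
\]
with $\theta=-\frac{\log r}{\log k}$. Together with Ahlfors regularity this pairs the volume exponent $d_H$ with the resistance exponent $\theta$, and Kigami's framework turns this into sub-Gaussian bounds with walk dimension $d_W=\theta+d_H$. I would then check the remaining structural hypotheses: that $(K,R)$ is a compact, connected resistance metric space carrying a regular resistance form (guaranteed by the Basic Setting and Remark 1 of Section \ref{resis}), and that $\mu$ is a Radon measure of full support. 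With all hypotheses verified, \cite[Theorems 15.10, 15.11]{ki4} yield (\ref{eqn61}) with constants controlled by the Ahlfors and resistance constants above, hence depending only on $K$, and I would conclude by citation.

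The main obstacle I anticipate is not any single estimate but the bookkeeping of matching our normalization to Kigami's hypotheses: confirming that the point-to-annulus resistance bound (rather than a two-point bound) is the correct input, that (A3) supplies exactly the adaptedness needed to state the result in $d$, and that the exponent identifications $d_H=\frac{\log N}{\log k}$ and $d_W=\theta+d_H$ are precisely those produced by the theory. Once this dictionary is fixed, the remainder of the proof is a direct application.
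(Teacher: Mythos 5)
Your proposal takes essentially the same route as the paper: the paper's entire proof of Proposition \ref{prop62} is the one-line observation that (\ref{eqn61}) follows from the resistance estimate of Theorem \ref{thm52} by applying \cite[Theorems 15.10 and 15.11]{ki4}. The hypothesis bookkeeping you spell out (Ahlfors $d_H$-regularity of $\mu$, the point-to-annulus resistance bound transferred from $d_G$ to $d$ via Lemma \ref{lemma51}, regularity and full support) is exactly what the paper leaves implicit in that citation, so your argument is a correct, slightly more detailed version of the same proof.
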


The rest of this subsection is just the same as that in \cite{BBKT}. First, we refer to \cite{GHL} for a characterization of $\mcF$ as a Besov space $B_{2,\infty}^{\sigma}(K)$.

For $\sigma>0$, define the \textit{Besov space} $B_{2,\infty}^{\sigma}(K)$ on $K$ as
\[
B_{2,\infty}^{\sigma}(K)=\big\{f\in L^2(K,\mu):[[f]]_{B_{2,\infty}^{\sigma}(K)}<\infty\big\},
\]
where
\[[[f]]_{B_{2,\infty}^{\sigma}(K)}=\sqrt{\sup_{0<\rho<1}\rho^{-2\sigma-d_H}\int_K\int_{B_\rho(x)}|f(x)-f(y)|^2\mu (dy)\mu(dx)}.\]
Notice that the Hausdorff dimension $d_H$ does not depend on the form $(\mcE,\mcF)$.

\begin{proposition}\cite[Theorems 4.2 and 4.6]{GHL}\label{prop63}
    Let $K$ be a $\USC$ and $(\mcE,\mcF)$ be a Dirichlet form on $L^2(K,\mu)$ satisfying (\ref{eqn61}).

    (a). $\mcF=B_{2,\infty}^{d_W/2}(K)$, and there are $C_1,C_2>0$ depending only on $c_1,c_2,c_3,c_4$ such that
    \[C_1\cdot [[f]]_{B_{2,\infty}^{d_W/2}(K)}\leq \mcE(f)\leq C_2\cdot [[f]]_{B_{2,\infty}^{d_W/2}(K)} \quad \text{ for every } f\in\mcF.\]

    (b). $d_W/2=\inf\big\{\sigma>0:B_{2,\infty}^\sigma(K)=Constants\big\}$.
\end{proposition}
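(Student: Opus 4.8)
The plan is to follow the standard route that turns the sub-Gaussian heat kernel estimate $(\ref{eqn61})$ into a Besov characterization of the Dirichlet domain, which is exactly the content of \cite[Theorems 4.2 and 4.6]{GHL}. First I would rewrite the energy purely in terms of the heat kernel. Since $K$ is compact and $\mu$ is a probability measure, the constant function lies in $\mcF$ with zero energy, so the semigroup is conservative, $\int_K p_t(x,y)\mu(dy)=1$. Using this together with the symmetry of $p_t$, one obtains for every $f\in L^2(K,\mu)$ the exact identity
\[\frac{1}{t}\langle (1-P_t)f,f\rangle_{L^2(K,\mu)}=\frac{1}{2t}\int_K\int_K \big(f(x)-f(y)\big)^2 p_t(x,y)\,\mu(dx)\mu(dy).\]
By spectral calculus the left-hand side is monotone increasing as $t\downarrow 0$, so that
\[\mcE(f)=\sup_{t>0}\frac{1}{2t}\int_K\int_K\big(f(x)-f(y)\big)^2p_t(x,y)\,\mu(dx)\mu(dy),\]
with $f\in\mcF$ precisely when this supremum is finite. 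Thus part (a) reduces to comparing this supremum over $t$ with the supremum over $\rho$ defining $[[f]]^2_{B_{2,\infty}^{d_W/2}(K)}$.

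For that comparison I would insert the two-sided bound $(\ref{eqn61})$. For the lower bound on $\mcE$, fix $t$ and keep only the diagonal region $d(x,y)\le t^{1/d_W}$, where $(\ref{eqn61})$ gives $p_t(x,y)\gtrsim t^{-d_H/d_W}$; writing $\rho=t^{1/d_W}$ this yields $\frac{1}{2t}\int\!\int p_t\,(f(x)-f(y))^2\gtrsim \rho^{-d_W-d_H}\int_K\int_{B_\rho(x)}(f(x)-f(y))^2$, and taking the supremum over $t$ produces $[[f]]^2_{B_{2,\infty}^{d_W/2}(K)}\lesssim \mcE(f)$. For the reverse inequality I would split the double integral over dyadic annuli $A_j=\{2^{j-1}t^{1/d_W}\le d(x,y)<2^jt^{1/d_W}\}$, bound $p_t$ on $A_j$ by the factor $t^{-d_H/d_W}\exp(-c\,2^{jd_W/(d_W-1)})$ coming from the upper estimate in $(\ref{eqn61})$, and estimate $\int\!\int_{A_j}(f(x)-f(y))^2\le [[f]]^2_{B_{2,\infty}^{d_W/2}}(2^jt^{1/d_W})^{d_W+d_H}$. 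The powers of $t$ cancel exactly (the exponent is $-1-\frac{d_H}{d_W}+\frac{d_W+d_H}{d_W}=0$), while the super-exponential decay of $\exp(-c\,2^{jd_W/(d_W-1)})$ makes the $j$-sum converge, giving $\mcE(f)\lesssim [[f]]^2_{B_{2,\infty}^{d_W/2}}$ uniformly in $t$. The only bookkeeping is to restrict to $t$ small so that the relevant radii stay below $1$, the radii $\ge 1$ contributing terms dominated by $\|f\|_{L^2}^2$ on the bounded space $K$ and damped by the Gaussian tail.

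Part (b) follows by running the same annulus estimate with a general exponent $\sigma$. If $f\in B_{2,\infty}^\sigma(K)$, replacing $d_W/2$ by $\sigma$ in the dyadic bound gives $\frac{1}{t}\langle(1-P_t)f,f\rangle\lesssim [[f]]^2_{B_{2,\infty}^\sigma}\,t^{(2\sigma-d_W)/d_W}$. For $\sigma>d_W/2$ the exponent of $t$ is strictly positive, so the right-hand side tends to $0$ as $t\downarrow 0$; since the left-hand side increases to $\mcE(f)$, this forces $\mcE(f)=0$, whence $f$ is constant. Thus $B_{2,\infty}^\sigma(K)=\text{Constants}$ for every $\sigma>d_W/2$. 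Conversely, for $\sigma\le d_W/2$ one has $B_{2,\infty}^{d_W/2}(K)\subset B_{2,\infty}^\sigma(K)$, and by part (a) the former equals $\mcF$, which contains non-constant functions (e.g.\ the function $h$ of Definition \ref{def47}); hence these spaces strictly contain the constants. Taking the infimum identifies the critical exponent as $d_W/2$.

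The main obstacle is the sharp two-sided comparison in part (a): one must extract from $(\ref{eqn61})$ that, at time scale $t$, the heat kernel behaves like the normalized indicator of a $d$-ball of radius $t^{1/d_W}$ up to a rapidly decaying tail, and then balance the exact cancellation of the $t$-powers against convergence of the dyadic sum. This is precisely where both the matching space–time scaling $d(x,y)^{d_W}\sim t$ and the strength of the exponential tail are used; the monotonicity identity $\mcE(f)=\sup_{t>0}\frac{1}{t}\langle(1-P_t)f,f\rangle$ is what converts these scale-by-scale estimates into the clean statement that $\mcF=B_{2,\infty}^{d_W/2}(K)$ with comparable norms.
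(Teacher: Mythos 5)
Your proposal addresses a statement the paper does not actually prove: Proposition \ref{prop63} is quoted verbatim from \cite[Theorems 4.2 and 4.6]{GHL}, and what you have written is precisely the standard proof of those cited theorems --- the heat-kernel representation of $\frac1t\langle(1-P_t)f,f\rangle$, spectral monotonicity in $t$, the near-diagonal lower bound giving $[[f]]^2_{B^{d_W/2}_{2,\infty}}\lesssim\mcE(f)$, the dyadic-annuli upper bound with the exact cancellation of the $t$-powers and super-exponential convergence of the $j$-sum, and the $t^{(2\sigma-d_W)/d_W}$ decay identifying the critical exponent in part (b). So the route is the intended one, and the quantitative steps are all correct.

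One step deserves a caveat: your justification of stochastic completeness. Compactness of $K$ and $\mu(K)=1$ do not by themselves put the constant function in $\mcF$ with $\mcE(1)=0$ for an \emph{arbitrary} Dirichlet form on $L^2(K,\mu)$. Indeed, the killed form $\tilde\mcE(f)=\mcE(f)+a\|f\|_{L^2(K,\mu)}^2$ with domain $\mcF$ is again a Dirichlet form, its heat kernel $e^{-at}p_t(x,y)$ still satisfies (\ref{eqn61}) for $0<t\le 1$ (with $c_1$ replaced by $e^{-a}c_1$), yet $\tilde P_t1=e^{-at}<1$ and $\tilde\mcE(1)=a\neq 0=[[1]]^2_{B^{d_W/2}_{2,\infty}(K)}$; this also shows that the proposition, read completely literally, is false without conservativeness as an implicit hypothesis. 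This is not a defect of your scheme but an imprecision shared with the statement itself: in \cite{GHL} stochastic completeness is built into the definition of a heat kernel, and in the paper's only application (Corollary \ref{coro64}) the forms come from resistance forms, for which $1\in\mcF$ and $\mcE(1)=0$ hold by (RF1), hence $P_t1=1$ by the spectral calculus argument you sketch. Adding conservativeness as a standing hypothesis (or restricting to the forms actually used) closes this gap, and the rest of your argument goes through verbatim. A similar cosmetic repair applies in part (b): the function $h$ of Definition \ref{def47} belongs to a specific resistance form rather than to the abstract $(\mcE,\mcF)$ of the proposition, so to produce a non-constant element of $\mcF=B^{d_W/2}_{2,\infty}(K)$ you should simply invoke the density of the domain of a Dirichlet form in $L^2(K,\mu)$.
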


As an immediate consequence of Propositions \ref{prop62} and \ref{prop63}, we see that self-similar $D_4$-symmetric resistance forms on $K$ are comparable with each other.

\begin{corollary}\label{coro64}
    Let $K$ be a $\USC$ and $(\mcE,\mcF),(\mcE',\mcF')$ be two self-similar $D_4$-symmetric resistance forms on $K$ such that the associated resistance metrics $R,R'$ are jointly continuous on $K\times K$ and $R(L_2,L_4)=1,R'(L_2,L_4)=1$. Then, we have $\mcF'=\mcF$, and there is a constant $C>0$ depending only on $K$ so that
    \[C^{-1}\cdot \mcE'(f)\leq \mcE(f)\leq C\cdot\mcE'(f)\hbox{ for every } f\in \mcF=\mcF'.\]
\end{corollary}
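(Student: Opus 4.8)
The plan is to exploit the fact that Proposition \ref{prop63} (b) characterizes the walk dimension $d_W$ purely through the ambient metric measure structure $(K,d,\mu)$, which is identical for both forms, thereby forcing the two renormalization factors to coincide; once this is known, the energy comparison is just a concatenation of the Besov norm equivalences.

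First I would apply Proposition \ref{prop62} to each of the two forms separately. This yields the sub-Gaussian heat kernel estimate (\ref{eqn61}) for $(\mcE,\mcF)$ with walk dimension $d_W=\theta+d_H$, and the analogous estimate for $(\mcE',\mcF')$ with walk dimension $d_W'=\theta'+d_H$, where $\theta=-\frac{\log r}{\log k}$, $\theta'=-\frac{\log r'}{\log k}$, and $d_H=\frac{\log N}{\log k}$ is common to both since it depends only on the geometry of $K$. Next I would invoke Proposition \ref{prop63} (b) for each form: it gives $d_W/2=\inf\{\sigma>0:B_{2,\infty}^\sigma(K)=Constants\}$ for the first form and $d_W'/2=\inf\{\sigma>0:B_{2,\infty}^\sigma(K)=Constants\}$ for the second. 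The crucial observation is that the Besov spaces $B_{2,\infty}^\sigma(K)$ are defined solely through $\mu$ and the Euclidean metric $d$, with no reference to the form, so the two infima on the right are literally the same quantity. Hence $d_W=d_W'$, which forces $\theta=\theta'$ and therefore $r=r'$.

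With $d_W=d_W'$ established, I would apply Proposition \ref{prop63} (a) to both forms to conclude $\mcF=B_{2,\infty}^{d_W/2}(K)=\mcF'$, together with the two-sided bounds $C_1[[f]]_{B_{2,\infty}^{d_W/2}(K)}\leq\mcE(f)\leq C_2[[f]]_{B_{2,\infty}^{d_W/2}(K)}$ and the analogous bounds for $\mcE'$ involving the \emph{same} Besov seminorm. Chaining these, for instance $\mcE(f)\leq C_2[[f]]_{B_{2,\infty}^{d_W/2}(K)}\leq (C_2/C_1')\,\mcE'(f)$ and symmetrically in the other direction, yields $C^{-1}\mcE'(f)\leq\mcE(f)\leq C\mcE'(f)$ with $C$ depending only on the constants $c_1,c_2,c_3,c_4$ of (\ref{eqn61}), hence only on $K$. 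Since the argument is a direct combination of the two preceding propositions, I do not anticipate a genuine obstacle; the single point deserving care is the identification $r=r'$, whose entire force comes from the form-independence of the Besov characterization in Proposition \ref{prop63} (b), and from the fact that both forms are realized on $L^2(K,\mu)$ for the same normalized Hausdorff measure $\mu$.
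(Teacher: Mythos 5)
Your proposal is correct and is essentially the paper's own argument: the paper states Corollary \ref{coro64} as ``an immediate consequence of Propositions \ref{prop62} and \ref{prop63},'' meaning precisely the chain you spell out — both forms satisfy the heat kernel estimate (\ref{eqn61}) on the same $L^2(K,\mu)$, Proposition \ref{prop63} (b) pins down a common walk dimension (hence $r=r'$) because the Besov scale depends only on $(K,d,\mu)$, and Proposition \ref{prop63} (a) then identifies both domains with $B_{2,\infty}^{d_W/2}(K)$ and makes both energies comparable to the same seminorm, with constants traceable to $c_1,\dots,c_4$ and hence only to $K$.
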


We finalize the proof of Theorem \ref{thm61} with an argument of \cite{BBKT}. The following Proposition \ref{prop65} is essentially the same as  \cite[Theorem 2.1]{BBKT}.

\begin{proposition}\cite[Theorem 2.1]{BBKT}\label{prop65}
    Let $K$ be a $\USC$, and $(\mcE,\mcF), (\mcE',\mcF)$ be two self-similar $D_4$-symmetric resistance forms on $K$ such that the associated resistance metrics $R,R'$ are jointly continuous on $K\times K$. If
    \[\mcE(f)\leq \mcE'(f)\quad\text{ for every } f\in \mcF,\]
    then for $\eta>0$, there exists $C>0$ such that $\big(C\big((1+\eta)\mcE'-\mcE\big),\mcF\big)$ is also a self-similar $D_4$-symmetric resistance form on $K$ such that the associated resistance metric is jointly continuous and the effective resistance between $L_2,L_4$ equals $1$.
\end{proposition}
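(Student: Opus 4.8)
The plan is to set $\mcE'':=(1+\eta)\mcE'-\mcE$ on the common domain $\mcF$ (recall $\mcF=\mcF'$ by Corollary \ref{coro64}) and to verify that, after normalization, it is a self-similar $D_4$-symmetric resistance form. The hypothesis $0\le\mcE\le\mcE'$ gives the two-sided bound $\eta\,\mcE'(f)\le\mcE''(f)\le(1+\eta)\mcE'(f)$ for all $f\in\mcF$, so $\mcE''$ is comparable to $\mcE'$. From this comparability the ``soft'' axioms are routine: $\mcE''$ is a nonnegative bilinear form with $\mcE''(f)=0$ iff $f$ is constant (RF1); $(\mcF/{\sim},\mcE'')$ is complete because its norm is equivalent to that of $\mcE'$ (RF2); RF3 holds since the domain is unchanged; and RF4 follows from $|f(p)-f(q)|^2\le\eta^{-1}\mcE''(f)\,R'(p,q)$. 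The associated resistance metric satisfies $(1+\eta)^{-1}R'\le R''\le\eta^{-1}R'$, hence is jointly continuous on $K\times K$, and I would take $C=R''(L_2,L_4)$ so that $C\mcE''$ has unit effective resistance between $L_2$ and $L_4$.

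Before treating symmetry and self-similarity I would record that $\mcE$ and $\mcE'$ share the same renormalization factor, $r=r'$. This follows from the Besov description in Proposition \ref{prop63}: since $\mcF=\mcF'=B_{2,\infty}^{d_W/2}(K)$ and the spaces $B_{2,\infty}^{\sigma}(K)$ are strictly nested in $\sigma$, the two walk dimensions, hence $\theta=-\log r/\log k$ and $\theta'$, must coincide. With $r=r'$, the energy self-similar identity and the $D_4$-invariance both pass term by term to the linear combination, so $\mcE''$ is self-similar with renormalization factor $r$ and $D_4$-symmetric.

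The crux is the Markov property (RF5): I must show $\mcE''(\bar f)\le\mcE''(f)$ for $\bar f=(f\vee 0)\wedge 1$, membership $\bar f\in\mcF$ being free from RF5 for $\mcE'$. Writing $\mcE''=\eta\,\mcE'+(\mcE'-\mcE)$ and using that $\eta\,\mcE'$ is already Markovian (so that the sum of two Markovian forms stays Markovian), it suffices to prove that the nonnegative difference $\mcE'-\mcE$ is itself Markovian. Both forms are strongly local, so they carry energy measures $\nu_f,\nu'_f$ with $\mcE(f)=\nu_f(K)$, $\mcE'(f)=\nu'_f(K)$, and the chain rule gives $d\nu_{\bar f}=\mathbf 1_{\{0<f<1\}}\,d\nu_f$. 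Hence $\mcE(f)-\mcE(\bar f)=\nu_f(B)$ and $\mcE'(f)-\mcE'(\bar f)=\nu'_f(B)$ with $B=\{f\le 0\}\cup\{f\ge 1\}$, and Markovianity of $\mcE'-\mcE$ is exactly the inequality $\nu_f(B)\le\nu'_f(B)$, which I would obtain from the sharper claim $\nu_f\le\nu'_f$ as Borel measures.

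To reach this measure domination I would exploit self-similarity: applying $\mcE\le\mcE'$ to $f\circ\Psi_w$ and scaling by $r^{-|w|}$ (same $r$) yields a cell-by-cell energy domination $r^{-|w|}\mcE(f\circ\Psi_w)\le r^{-|w|}\mcE'(f\circ\Psi_w)$ at every scale. The genuine obstacle — precisely the point where Barlow--Bass--Kumagai--Teplyaev invoke the knight and corner moves, which are unavailable on $\USC$'s — is that the self-similar decomposition of the energy measures carries boundary (cell-interface) contributions, so that $\nu_f(\Psi_wK)$ is not simply $r^{-|w|}\mcE(f\circ\Psi_w)$ and the cell-wise energy inequality does not instantly upgrade to a measure inequality. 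My plan is to control these interface contributions with the trace theorem (Theorem \ref{thm43}) and the effective-resistance estimates (Theorem \ref{thm52}): since $r=r'$, both forms restrict to the line-segment boundaries as the \emph{same} Besov space $\Lambda_{2,2}^{\sigma(r)}$ with constants depending only on $k$, giving uniform, scale-invariant control of the energy supported on cell interfaces. Passing to finer and finer cell decompositions makes the interface error negligible against the cell-wise domination, and a martingale/differentiation argument along the nested cells then yields $\nu_f\le\nu'_f$, completing RF5 and hence the proposition. I expect this control of the energy measure on cell boundaries to be the most delicate and substantial step, serving here as the replacement for the knight and corner moves.
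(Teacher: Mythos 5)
First, a point of calibration: the paper itself does not prove Proposition \ref{prop65} at all --- it is imported verbatim from \cite[Theorem 2.1]{BBKT} --- so your attempt has to be measured against that argument and against the tools this paper makes available. Much of your skeleton is correct and would constitute a proof if the last step were closed: the two-sided bound $\eta\,\mcE'\leq\mcE''\leq(1+\eta)\,\mcE'$ and the resulting (RF1)--(RF4), the normalization $C=R''(L_2,L_4)$, the identification $r=r'$ via the walk dimension (Proposition \ref{prop63}(b), applied after harmlessly rescaling both forms so that Proposition \ref{prop62} applies), the term-by-term inheritance of self-similarity and $D_4$-symmetry once $r=r'$ is known, and the reduction of (RF5) to positivity of the signed measure $(1+\eta)\nu'_f-\nu_f$ on $B=\{f\leq0\}\cup\{f\geq1\}$. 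The genuine gap is the step you yourself flag as the crux: your mechanism for proving $\nu_f\leq\nu'_f$ does not work as described. Theorem \ref{thm43} controls the Besov norm of the \emph{restriction of a function} to $\partial_nK$; it says nothing about the \emph{mass an energy measure places} on $\partial_nK$, which is exactly the interface obstruction you identify, and the concluding ``martingale/differentiation argument'' is left unspecified. The fix available inside the paper is different from what you invoke: the argument of Lemma \ref{lemma711} (sub-Gaussian heat kernel estimates from Proposition \ref{prop62}, the uniform domain property of Proposition \ref{prop24}, and Murugan's theorem \cite[Theorem 2.9]{Mathav}) applies verbatim to any self-similar $D_4$-symmetric resistance form with jointly continuous resistance metric, hence to both $\mcE$ and $\mcE'$, and yields $\nu_f(\partial_nK)=\nu'_f(\partial_nK)=0$ outright. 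With that in hand, the self-similar decomposition of energy measures gives $\nu_f(\Psi_wK)=2r^{-|w|}\mcE(f\circ\Psi_w)\leq 2r^{-|w|}\mcE'(f\circ\Psi_w)=\nu'_f(\Psi_wK)$ for every $w\in W_*$, and writing an open set as an increasing union of finite unions of level-$n$ cells (whose pairwise intersections are null for both measures) upgrades this to $\nu_f\leq\nu'_f$ on all Borel sets.

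You should also know that the route of \cite{BBKT} itself is more economical and sidesteps the interface problem entirely, because full measure domination is more than (RF5) requires: only sets of the form $f^{-1}(E)$ enter. For smooth $F$ with $F(0)=0$ and bounded $F'$, the chain rule for energy measures of strongly local forms gives
\[
\int_K F'(f)^2\,d\big((1+\eta)\nu'_f-\nu_f\big)=2\Big((1+\eta)\mcE'\big(F(f)\big)-\mcE\big(F(f)\big)\Big)\geq 2\eta\,\mcE'\big(F(f)\big)\geq0,
\]
and choosing $F_n$ with $0\leq F_n'\leq1$ and $F_n'\to\mathbf{1}_{(-\infty,0]\cup[1,\infty)}$ pointwise, dominated convergence yields $\big((1+\eta)\nu'_f-\nu_f\big)(B)\geq0$, i.e.\ (RF5) for $\mcE''$. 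This uses no self-similarity, no cell decomposition, and no boundary nullity; it is general strongly local Dirichlet form theory. In particular, your remark that this proposition is precisely where Barlow--Bass--Kumagai--Teplyaev invoke the knight and corner moves is a misattribution: \cite[Theorem 2.1]{BBKT} is a general statement about comparable strongly local Dirichlet forms, which is exactly why the present paper can quote it unchanged for $\USC$'s; the $\USC$ replacement for the knight/corner moves lies in the resistance estimates feeding Corollary \ref{coro64}, not in Proposition \ref{prop65}.
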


\begin{proof}[Proof of Theorem \ref{thm61}]
    We prove the uniqueness by contradiction. Assume $(\mcE,\mcF)$ and $(\mcE',\mcF')$ be  two different self-similar $D_4$-symmetric resistance forms on $K$ such that the associated resistance metrics $R,R'$ are jointly continuous on $K\times K$ and $R(L_2,L_4)=R'(L_2,L_4)=1$. By Corollary \ref{coro64}, we know $\mcF=\mcF'$. In addition, letting
    \[\sup(\mcE'|\mcE)=\sup\big\{\frac{\mathcal{E}'(f)}{\mathcal{E}(f)}:f\in \mcF\setminus Constants\big\},\quad \inf(\mcE'|\mcE)=\inf\big\{\frac{\mathcal{E}'(f)}{\mathcal{E}(f)}:f\in \mcF\setminus Constants\big\},\]
    we know that
    \begin{equation}\label{eqn62}
        \frac{\sup(\mcE'|\mcE)}{\inf(\mcE'|\mcE)}\leq C^2,
    \end{equation}
    where $C$ is the same constant in Corollary \ref{coro64}. Now for $\eta>0$, let $\mcE''=(1+\eta)\mcE'-\inf(\mcE'|\mcE)\mcE$. By Proposition \ref{prop65}, and by choosing a suitable constant $C'>0$, one can see that $(C'\mcE'',\mcF)$ is also a self-similar $D_4$-symmetric resistance form on $K$ such that the associated resistance metrics $R''$ is jointly continuous on $K\times K$ and $R''(L_2,L_4)=1$. In addition,
    \[\frac{\sup(C'\mcE''|\mcE)}{\inf(C'\mcE''|\mcE)}=\frac{\sup(\mcE''|\mcE)}{\inf(\mcE''|\mcE)}=\frac{(1+\eta)\sup(\mcE'|\mcE)-\inf(\mcE'|\mcE)}{\eta \inf(\mcE'|\mcE)}> \frac{1}{\eta}\cdot\big(\frac{\sup(\mcE'|\mcE)}{\inf(\mcE'|\mcE)}-1\big).\]
    When $\eta$ is small,  this contradicts Corollary \ref{coro64}, since a same bound estimate as (\ref{eqn62}) with $\mcE'$ replaced by $C'\mcE''$ should hold by Corollary \ref{coro64}.
\end{proof}

\section{Weak convergence: sliding the $\USC$}\label{sec7}
By removing the constrain that squares living on grids, we are able to view the $\USC$ family (with same $k,N$) as a collection of moving fractals, instead of isolated ones in the classical setting. In this section, we no longer focus on one $\USC$, but let the $\USC$ slide around. This situation is essentially new compared with previous ones.\vspace{0.2cm}

\noindent\textbf{{Basic settings of Section \ref{sec7}.}} In this section, we assume that $K_n,n\geq 1$ (and also $K$) be $\USC$'s with the same $k,N$, and assume $K_n\rightarrow K$ in Hausdorff metric (as subsets of $\square$).

Let $(\mcE,\mcF)$ be the unique self-similar $D_4$-symmetric resistance form on $K$ such that the resistance metric $R$ is jointly continuous and $R(L_2,L_4)=1$; also for $n\geq 1$, let $(\mcE_n,\mcF_n)$ be the unique self-similar $D_4$-symmetric resistance form on $K_n$ such that the resistance metric $R_n$ is jointly continuous and $R_n(L_2,L_4)=1$.

Let $\mu$ be the normalized Hausdorff measure on $K$, and let $\mu_n$ be the normalized Hausdorff measure on $K_n$. 

Let $\bm{M}=(\Omega,\mathcal{M},X_t,\mathbb{P}_x)$ be the Hunt process associated with the Dirichlet form $(\mcE,\mcF)$ on $L^2(K,\mu)$, and let $\bm{M}_n=(\Omega,\mathcal{M}_n,X^{(n)}_t,\mathbb{P}^{(n)}_x)$ be the Hunt process associated with the Dirichlet form $(\mcE_n,\mcF_n)$ on $L^2(K_n,\mu_n)$.\vspace{0.2cm}

For convenience, and to shorten the statements of the theorem, we will always use the notations as follows:

a). let $\{\Psi_i\}_{i=1}^N$ be the iterated function system (i.f.s.) generating $K$, and let $\{\Psi_{n,i}\}_{i=1}^N$ be the i.f.s. generating $K_n$. In addition, for each $w=w_1w_2\cdots w_m\in W_*$, similarly to $\Psi_w$, we write $\Psi_{n,w}:=\Psi_{n,w_1}\circ\Psi_{n,w_2}\circ\cdots\circ\Psi_{n,w_m}$ for short;

b). let $d_G$ be the geodesic metric on $K$, and let $d_{G,n}$ be the geodesic metric on $K_n$.\vspace{0.2cm}

The following is the main result in this section.

\begin{theorem}\label{thm71}
    Assume basic settings of Section \ref{sec7}. Then (i) and (ii) are equivalent:

    (i). $R_n\rightarrowtail R$. Moreover, for any $x_n\to x$, with $x_n\in K_n,n\geq 1$ and $x\in K$, we have \[\mathbb{P}_{x_n}^{(n)}\big((X^{(n)}_t)_{t\geq 0}\in \cdot\big)\Rightarrow \mathbb{P}_x\big((X_t)_{t\geq 0}\in \cdot\big),\]
    where the weak convergence ``$\Rightarrow$'' is in the sense of probability measures on $D(\mathbb{R}_+,\square)$ (the space of cadlag processes on $D(\mathbb{R}_+,\square)$  equipped with the usual Skorohod $J1$-topology).

    (ii). $d_{G,n},n\geq1$ are equicontinuous, viewed as functions on $K_n\times K_n\subset \square^2\subset \mathbb{R}^4$, equipped with the Euclidean metric on $\mathbb{R}^4$.
\end{theorem}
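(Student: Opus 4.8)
The plan is to prove the equivalence by showing that each of (i) and (ii) is equivalent to the single statement $R_n\rightarrowtail R$, so that the real content is the implication (ii)$\Rightarrow R_n\rightarrowtail R$. The bridge is Theorem~\ref{thm52}: for every $n$ we have $C_1 d_{G,n}(x,y)^{\theta_n}\leq R_n(x,y)\leq C_2 d_{G,n}(x,y)^{\theta_n}$ with $C_1,C_2$ depending only on $k$ and $\theta_n=-\log r_n/\log k$, where by Lemma~\ref{lemma410} the factors $r_n\in[2/k,N/k^2]$ force the exponents $\theta_n$ to stay in a compact subset of $(0,\infty)$. Consequently $\{R_n\}$ is equicontinuous on $K_n\times K_n$ if and only if $\{d_{G,n}\}$ is: composing an equicontinuous, uniformly bounded family with $t\mapsto t^{\theta_n}$ (resp. $t\mapsto t^{1/\theta_n}$) preserves equicontinuity uniformly in $n$ precisely because the exponents lie in a compact subset of $(0,\infty)$. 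For (i)$\Rightarrow$(ii): if $R_n\rightarrowtail R$ then $\{R_n\}$ is equicontinuous by Lemma~\ref{lemmab2}(b), hence so is $\{d_{G,n}\}$. Conversely, once $R_n\rightarrowtail R$ is in hand, the basic settings (a)--(f) of Subsection~\ref{app1} all hold (the measures converge, $\mu_n\Rightarrow\mu$, by continuity of self-similar measures in the i.f.s.), so the weak convergence of processes asserted in (i) is exactly Theorem~\ref{thmc1}. Everything thus reduces to deriving $R_n\rightarrowtail R$ from (ii).

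So, assuming (ii), the equivalence above makes $\{R_n\}$ equicontinuous and uniformly bounded, whence by Proposition~\ref{thmb3} some subsequence satisfies $R_{n_l}\rightarrowtail R'$ with $R'\in\mathcal{RM}(K)\cap C(K^2)$; passing to a further subsequence I may also assume $r_{n_l}\to r'\in[2/k,N/k^2]$ and $\Psi_{n_l,i}\to\Psi_i$ for each $i$ (the latter being a geometric fact: the translation vectors are bounded, any limit i.f.s. generates a $\USC$ whose attractor is $\lim K_{n_l}=K$, and the level-$1$ cells of a $\USC$ are determined by the set). I would then identify $R'$ as the resistance metric of the unique form of Theorem~\ref{thm61} and invoke uniqueness. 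Two of the required properties are routine. $D_4$-symmetry: each $\mcE_n$ being $D_4$-symmetric means $R_n(\Gamma x,\Gamma y)=R_n(x,y)$ for $\Gamma\in\msG$, and since $\Gamma$ preserves $\square$, $K$ and every $K_n$, this passes to the limit to give $R'(\Gamma x,\Gamma y)=R'(x,y)$. The normalization $R'(L_2,L_4)=1$: since $L_2,L_4$ are fixed closed subsets of every $K_n$ and of $K$, the $\Gamma$-convergence of forms (Proposition~\ref{thmb5}) together with equicoercivity (the harmonic minimizers have energy $1$ and are equicontinuous by Theorem~\ref{thm52}) yields $R_{n_l}(L_2,L_4)\to R'(L_2,L_4)$, and each left-hand side equals $1$.

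The heart of the argument, and the main obstacle, is showing that the limiting form $(\mcE',\mcF')$ associated with $R'$ is \emph{self-similar} with factor $r'$, i.e. $\mcE'(f)=(r')^{-1}\sum_{i=1}^N\mcE'(f\circ\Psi_i)$. I would prove the two inequalities separately using the $\Gamma$-convergence $(\mcE_{n_l},\mcF_{n_l})\to(\mcE',\mcF')$ (Proposition~\ref{thmb5}) and the identity $\mcE_{n_l}(g)=r_{n_l}^{-1}\sum_i\mcE_{n_l}(g\circ\Psi_{n_l,i})$. For ``$\geq$'', choose a recovery sequence $f_{n_l}\rightrightarrows f$ with $\mcE_{n_l}(f_{n_l})\to\mcE'(f)$; since $\Psi_{n_l,i}\to\Psi_i$ we have $f_{n_l}\circ\Psi_{n_l,i}\rightrightarrows f\circ\Psi_i$, so the $\liminf$ inequality of $\Gamma$-convergence applied cell by cell, together with $r_{n_l}\to r'$ and superadditivity of $\liminf$ over the $N$ terms, gives $\mcE'(f)\geq(r')^{-1}\sum_i\mcE'(f\circ\Psi_i)$. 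For ``$\leq$'' I would glue cell-wise near-optimal recovery sequences: take $\phi^{(i)}_{n_l}\rightrightarrows f\circ\Psi_i$ with $\mcE_{n_l}(\phi^{(i)}_{n_l})\to\mcE'(f\circ\Psi_i)$, transplant them to the cells via $\Psi_{n_l,i}$ to build $f_{n_l}\rightrightarrows f$, and apply the $\liminf$ inequality once more to obtain $\mcE'(f)\leq\liminf\mcE_{n_l}(f_{n_l})=(r')^{-1}\sum_i\mcE'(f\circ\Psi_i)$. The difficulty is that the transplanted pieces do not match on the one-dimensional interfaces $\Psi_{n_l,i}K_{n_l}\cap\Psi_{n_l,j}K_{n_l}$, so $f_{n_l}$ must be corrected there; the correction energy is controlled by the Besov semi-norm of the interface mismatch via the trace theorem (Theorem~\ref{thm43}), and the key estimate is that this mismatch tends to $0$ in the $\Lambda_{2,2}^{\sigma(r)}$ norm. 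I expect to establish this by first proving the identity for $f$ in a dense subclass whose cell traces are controlled (using Corollary~\ref{coro64} to pass between comparable forms) and then extending by density and the closedness of $\mcE'$. This gluing-with-vanishing-correction step is where essentially all the work lies.

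Once self-similarity is established, $R'$ is the jointly continuous resistance metric of a self-similar $D_4$-symmetric form with $R'(L_2,L_4)=1$, so $R'=R$ by Theorem~\ref{thm61}. Since every subsequence of $\{R_n\}$ then has a further subsequence converging to the same limit $R$, we conclude $R_n\rightarrowtail R$, which by the first paragraph completes both (ii)$\Rightarrow$(i) and the equivalence.
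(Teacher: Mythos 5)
Your overall architecture matches the paper's: (i)$\Rightarrow$(ii) via Lemma \ref{lemmab2}(b) and Theorem \ref{thm52}; for (ii)$\Rightarrow$(i), extract a subsequential limit $R'$ via Proposition \ref{thmb3} (as in Lemma \ref{lemma77}), show the limit form is self-similar, $D_4$-symmetric and normalized, invoke the uniqueness Theorem \ref{thm61}, and finish with Theorem \ref{thmc1}. You also correctly locate the crux: the ``$\leq$'' half of the self-similar energy identity, where cell-wise recovery sequences must be glued. But precisely there your proposal has a genuine gap. Your plan is to correct the interface mismatch by an extension whose energy is controlled, via the trace theorem (Theorem \ref{thm43}), by the $\Lambda^{\sigma(r)}_{2,2}$ semi-norm of the mismatch, and you assert that this semi-norm tends to $0$. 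That key estimate is not established, and it is doubtful as stated: $\Gamma$-convergence only gives \emph{uniform} convergence of the recovery sequences, and sup-norm smallness does not control the Besov trace semi-norm --- a function of amplitude $\varepsilon$ oscillating at scale $k^{-m}$ has $\Lambda^{\sigma}_{2,2}$ semi-norm of order $\varepsilon k^{\sigma m}$, which is unbounded in $m$ for fixed $\varepsilon$. So the traces of the transplanted recovery sequences may have arbitrarily large mismatch in the only norm that controls the correction energy, and ``extending by density and closedness of $\mcE'$'' cannot repair this, since the problematic quantity is the energy of the correction itself, not a property of the limit function $f$.

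The paper circumvents this difficulty entirely by a two-stage argument that is absent from your proposal. First (Lemma \ref{lemma78}) it proves the identity only for $f$ with $f|_{\partial_1 K}=0$: in that case the cell-wise recovery sequences $g_{n,i}$ can be truncated, using the Markov property, so that they vanish exactly on $\partial_0 K_n$ at asymptotically negligible cost in energy, and then the glued function is automatically continuous --- no interface correction is needed at all. Second, it upgrades to all $f\in\mcF_\infty$ using energy measures: locality of energy measures (Lemma \ref{lemma79}), the measure-level identity away from $\partial_1K$ (Lemma \ref{lemma710}), and --- crucially --- the fact that energy measures do not charge the cell boundaries, $\nu_f(\partial_1K)=0$ (Lemma \ref{lemma711}). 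That last fact is itself nontrivial: it uses the sub-Gaussian heat kernel estimates obtained from the resistance estimates via Kigami's results together with Murugan's theorem on uniform domains, which is the reason the paper proved Proposition \ref{prop24}. None of this machinery (the boundary-vanishing reduction, energy measures, the non-charging of $\partial_1K$) appears in your proposal, so the central step of (ii)$\Rightarrow$(i) --- self-similarity of the limit form --- remains unproven; the surrounding reductions you give (equicontinuity equivalence, $D_4$-symmetry, normalization, the subsequence argument, and the appeal to Theorems \ref{thm61} and \ref{thmc1}) are sound and agree with the paper.
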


\noindent\textbf{Remark 1.} By continuous embedding, the weak convergence holds on $D(\mathbb{R}_+,\mathbb{R}^2)$ as well.
\vspace{0.2cm}

\noindent\textbf{Remark 2.} Since probability distributions forms a complete separable metric space with Prohorov metric (on $D(\mathbb{R}_+,\square)$ in Skorohod $J1$-topology), one can replace the sequence $K_n,n\geq1$ with a continuous family $K_s,s\in [0,1]$, such that $K_s\rightarrow K_{s_0}$ with $s\to s_0\in [0,1]$, in the theorem.

\subsection{Basic geometric properties}\label{sec71}
As preparations, we provide some easy observations on the normalized Hausdorff measures and geodesic metrics in this subsection.

\begin{lemma}\label{lemma72}
    Let $K_n,n\geq 1$ and $K$ be $\USC$'s with the same $k,N$.

    (a). Assume that for each $1\leq i\leq N$, $\Psi_{n,i}$ converges to $\Psi_i$ (in the finite dimensional linear space of affine mappings), then $K_n\to K$.

    (b). Conversely, if $K_n\to K$, then by reordering $\Psi_{n,i},1\leq i\leq N$ for each $n\geq 1$, we have $\Psi_{n,i}$ converges to $\Psi_i$ for each $1\leq i\leq N$.
\end{lemma}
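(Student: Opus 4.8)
The plan is to treat the two directions separately: (a) is a routine contraction estimate for the Hutchinson operators, while (b) is a compactness argument whose essential content is that the defining similarities of a $\USC$ are recoverable from the set itself.

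For (a), I would work with the operators $F(A)=\bigcup_{i=1}^N\Psi_i(A)$ and $F_n(A)=\bigcup_{i=1}^N\Psi_{n,i}(A)$ on compact subsets of $\square$, recalling $K=F(K)$ and $K_n=F_n(K_n)$. Writing $\Psi_i(x)=\frac{x}{k}+c_i$ and $\Psi_{n,i}(x)=\frac{x}{k}+c_{n,i}$, the hypothesis $\Psi_{n,i}\to\Psi_i$ means $\varepsilon_n:=\max_i|c_{n,i}-c_i|\to 0$. Using the elementary facts $\delta(\bigcup_iA_i,\bigcup_iB_i)\le\max_i\delta(A_i,B_i)$ and that a ratio-$\frac1k$ similarity scales $\delta$ by $\frac1k$, together with $\delta(\Psi_{n,i}K,\Psi_iK)\le\sup_{x\in K}|\Psi_{n,i}x-\Psi_ix|=|c_{n,i}-c_i|$, I get
\[
\delta(K_n,K)\le\delta\big(F_n(K_n),F_n(K)\big)+\delta\big(F_n(K),F(K)\big)\le\tfrac1k\,\delta(K_n,K)+\varepsilon_n,
\]
so that $\delta(K_n,K)\le\frac{k}{k-1}\varepsilon_n\to0$, which is (a).

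For (b), the translations $c_{n,i}$ lie in the compact set $[0,1-\tfrac1k]^2$, and the numbering convention \eqref{eqn23} fixes the $4(k-1)$ boundary cells identically for every $\USC$ with the given $k$, so only the interior translations are free. I would argue by contradiction: if the multiset $\{c_{n,i}\}_i$ does not converge to $\{c_i\}_i$, then along some subsequence each $c_{n_l,i}\to\tilde c_i$ with $\{\tilde c_i\}_i\ne\{c_i\}_i$. The limiting maps $\tilde\Psi_i(x)=\frac{x}{k}+\tilde c_i$ have an attractor $\tilde K$, and part (a) applied along the subsequence gives $K_{n_l}\to\tilde K$; since also $K_{n_l}\to K$, uniqueness of Hausdorff limits forces $\tilde K=K$. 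The desired contradiction will follow once I show that the translations are intrinsic to the attractor. Here I set $P=\{c:\tfrac1kK+c\subseteq K\}$, a set determined by $K$ alone; the genuine cells satisfy $c_i\in P$, and since $\tilde K=K$ gives $\tfrac1k K+\tilde c_i=\tfrac1k\tilde K+\tilde c_i\subseteq\tilde K=K$, we also have every $\tilde c_i\in P$.

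The crux, and the step I expect to be the main obstacle, is the rigidity statement $P=\{c_1,\dots,c_N\}$, i.e. that a genuine $\USC$ admits no ``accidental'' $\frac1k$-copy of $K$ inside $K$ beyond the $N$ designated cells. To attack this I would exploit that every cell boundary lies in the fractal: from the boundary-included condition one has $\partial_0K\subseteq K$, hence $\partial(\Psi_i\square)=\Psi_i(\partial_0K)\subseteq\Psi_iK\subseteq K$, so a copy $\tfrac1kK+c\subseteq K$ drags a full $\frac1k$-square boundary into $K$ and must align with the level-$1$ grid; ruling out misaligned or spurious copies is where the open set condition (A1), the adaptedness/separation estimates (A3), and the non-overlapping condition enter, and this is the delicate part. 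Granting $P=\{c_i\}_i$, both $\{c_i\}_i\subseteq P$ and $\{\tilde c_i\}_i\subseteq P$, and a counting argument with the intrinsic normalized Hausdorff measure closes the loop: each genuine cell has $\mu$-measure $\tfrac1N$ and distinct cells overlap in $\mu$-null sets, so if the list $\{\tilde c_i\}_i$ omitted some $c_j$ it would cover $K$ by at most $N-1$ distinct cells, giving $\mu(K)\le\frac{N-1}{N}<1$, a contradiction. Thus $\{\tilde c_i\}_i=\{c_i\}_i$, contradicting $\{\tilde c_i\}_i\ne\{c_i\}_i$. Hence the multiset $\{c_{n,i}\}_i$ converges to the $N$ distinct points $\{c_i\}_i$, which yields (for $n$ large) a reordering along which $c_{n,i}\to c_i$, i.e. $\Psi_{n,i}\to\Psi_i$ for each $i$, proving (b).
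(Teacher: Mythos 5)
Your part (a) is correct: the contraction estimate $\delta(K_n,K)\le\frac1k\,\delta(K_n,K)+\varepsilon_n$ for the Hutchinson operators is sound and even gives a quantitative rate $\delta(K_n,K)\le\frac{k}{k-1}\varepsilon_n$. The paper argues differently (it compares the level-$m$ square unions $\bigcup_{w\in W_m}\Psi_{n,w}\square$ and $\bigcup_{w\in W_m}\Psi_w\square$ and uses that both are $k^{-m}$-close to $K_n$, $K$), but the two routes are equally valid.

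Part (b) contains a genuine gap, and it is precisely the step you flag yourself: the rigidity claim $P:=\{c:\frac1k K+c\subseteq K\}=\{c_1,\dots,c_N\}$ is never proved --- you write ``granting $P=\{c_i\}_i$'' --- and the entire remainder of your argument (the Hausdorff-measure counting, which is fine) rests on it. This is not a deferrable verification, and the heuristic you offer for it starts from a false premise: a $1/k$-square whose boundary lies in $K$ need not be a cell (for the standard Sierpinski carpet the central square $[\frac13,\frac23]^2$ has its boundary contained in $K$), so ``dragging a square boundary into $K$'' does not force alignment with the cell structure; moreover, for a general $\USC$ the interior cells do not sit on any grid --- that is the whole point of the class --- so $K$ contains long line segments at non-grid positions and there is no grid to align with. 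Excluding an accidental copy is intrinsically a multi-scale, two-dimensional matter: already for the standard carpet, the candidate $c=(\frac29,0)$ has the full boundary of its square $[\frac29,\frac59]\times[0,\frac13]$ inside $K$, and it is ruled out only because one of the eight level-two sub-pieces of the would-be copy lands exactly on a hole of $K$. Nothing in (A1), (A3) or the non-overlapping condition obviously converts into such an argument uniformly over all $\USC$'s, so as written this step is an unproved (and possibly quite hard) assertion, arguably harder than the lemma it is meant to serve.

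Note also that the paper's identification step avoids fractal rigidity altogether, which is why its proof is short: it shows the subsequential limits $\Psi'_j$ form an i.f.s.\ still satisfying Definition \ref{def21}, passes to the filled level-one sets with $\bigcup_{j}\Psi'_j\square=\bigcup_j\Psi_j\square$, and then invokes only the elementary planar fact that this polygonal region admits a unique decomposition into $N$ non-overlapping squares of side $1/k$ (peeling from the outer ring inwards). If you want to salvage your scheme, replace your claim about $P$ by this square-tiling uniqueness applied to the squares $\frac1k\square+\tilde c_i$ (whose boundaries, being limits of $\Psi_{n_l,i}\partial_0\square\subseteq K_{n_l}$, lie in $K$); your compactness, limit-i.f.s.\ and reordering steps then go through unchanged.
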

\begin{proof}
    (a). One can easily check that $\Psi_{n,w}\to \Psi_w$ and hence $\delta(\Psi_{n,w}\square,\Psi_{w}\square)\to 0$ as $n\to\infty$ for each $w\in W_*$. As a consequence,
    \[\delta\big(\bigcup_{w\in W_m}\Psi_{n,w}\square,\bigcup_{w\in W_m}\Psi_{w}\square\big)\to0\text{, as }n\to\infty.\]
    (a) then follows immediately, noticing that
    \[\delta(K,\bigcup_{w\in W_m}\Psi_w\square)\leq k^{-m},\text{ and }\delta(K_n,\bigcup_{w\in W_m}\Psi_{n,w}\square)\leq k^{-m}\quad\hbox{ for every } n\geq 1.\]

    (b). By choosing a subsequence $n_l,l\geq 1$, we have $\Psi_{n_l,j}$ converges for each $1\leq j\leq N$. Take the limit to be $\Psi_j',1\leq j\leq N$. Then one can check that $\{\Psi_j'\}_{j=1}^N$ is still an i.f.s. satisfying Definition \ref{def21}, noticing that $\Psi_{n_l,j}\square\to \Psi'_j\square$. In addition, we have $\bigcup_{j=1}^N\Psi_j\square=\bigcup_{j=1}^N\Psi'_j\square$, hence $\{\Psi_j'\}_{j=1}^N=\{\Psi_i\}_{i=1}^N$ since there is a unique way to divide $\bigcup_{j=1}^N\Psi'_j\square$ into $N$ disjoint squares with side length $k^{-1}$ (cut $\square$ from outer to middle to see this).

    Noticing that the above argument works for any subsequence, for each $1\leq i\leq N$, we can find a sequence $\{j(n,i)\}_{j\geq 1}$ so that $\Psi_{n,j(n,i)}\to \Psi_i$ (otherwise, for some subsequence $n_l$, we can not find a further subsequence so that $\Psi_{n_{l_{l'}},j}\to \Psi_i$ for some $j$ as shown in the previous discussion). In addition, for each $i\neq i'$, the sequence $\{\Psi_{n,j(n,i)}\}_{n\geq 1}$ and $\{\Psi_{n,j(n,i')}\}_{n\geq 1}$ will be eventually disjoint since they converge to different limits, so we can assume that all the sequences are disjoint. Thus, by replacing $\Psi_{n,i}$ with $\Psi_{n,j(n,i)}$, we get a proper reordering so that $\Psi_{n,i}\to \Psi_i$ for each $1\leq i\leq N$.
\end{proof}

\noindent{\textbf{Remark.}} Due to Lemma \ref{lemma72}, in the following context,  we will always assume $\Psi_{n,i}$ converges to $\Psi_i$ for $1\leq i\leq N$, whenever $K_n\to K$.

\begin{proposition}\label{prop73}
    Let $K_n,n\geq 1$ and $K$ be $\USC$'s with the same $k,N$. Assume $K_n\to K$. Then $\mu_n\Rightarrow \mu$, where ``$\Rightarrow$'' means weak convergence on $(\square,d)$.
\end{proposition}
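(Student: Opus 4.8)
The goal is to prove that if $K_n\to K$ in Hausdorff metric (with $\Psi_{n,i}\to\Psi_i$ for each $i$, as arranged by Lemma \ref{lemma72}), then the normalized Hausdorff measures converge weakly, $\mu_n\Rightarrow\mu$, on $(\square,d)$. Since all measures are supported in the compact set $\square$ and are probability measures, it suffices to test against continuous functions, and a convenient route is to verify convergence of integrals on a class rich enough to determine weak convergence.

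\textbf{Approach.} The plan is to exploit the self-similar structure of the Hausdorff measures. Each $\mu_n$ satisfies $\mu_n=\frac{1}{N}\sum_{i=1}^N\mu_n\circ\Psi_{n,i}^{-1}$, and by iterating, for any word length $m$ we have $\mu_n=\frac{1}{N^m}\sum_{w\in W_m}\mu_n\circ\Psi_{n,w}^{-1}$, with each piece $\mu_n\circ\Psi_{n,w}^{-1}$ a probability measure carried by the small cell $\Psi_{n,w}K_n\subset\Psi_{n,w}\square$, a square of side $k^{-m}$. First I would fix $f\in C(\square)$ and estimate $\int_\square f\,d\mu_n$ by replacing $f$ on each level-$m$ cell by a constant, say the value $f(\Psi_{n,w}(p_0))$ at a reference point. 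Using uniform continuity of $f$ and $\diam(\Psi_{n,w}\square)=\sqrt{2}\,k^{-m}$, this gives
\begin{equation}
\Big|\int_\square f\,d\mu_n-\frac{1}{N^m}\sum_{w\in W_m}f(\Psi_{n,w}(p_0))\Big|\leq \omega_f(\sqrt{2}\,k^{-m}),
\end{equation}
where $\omega_f$ is the modulus of continuity of $f$. The identical estimate holds for $\mu$ with $\Psi_{n,w}$ replaced by $\Psi_w$.

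\textbf{Key steps.} The crucial point is that the discrete sums converge: since $\Psi_{n,i}\to\Psi_i$ for each $i$, one checks (exactly as in the proof of Lemma \ref{lemma72}(a)) that $\Psi_{n,w}\to\Psi_w$ for every fixed $w\in W_*$, hence $\Psi_{n,w}(p_0)\to\Psi_w(p_0)$ as $n\to\infty$. Because $W_m$ is a finite set for fixed $m$, and $f$ is continuous, this yields
\begin{equation}
\lim_{n\to\infty}\frac{1}{N^m}\sum_{w\in W_m}f(\Psi_{n,w}(p_0))=\frac{1}{N^m}\sum_{w\in W_m}f(\Psi_{w}(p_0)).
\end{equation}
Assembling the three displays, I would argue that for any $\varepsilon>0$, first choose $m$ so large that $\omega_f(\sqrt{2}\,k^{-m})<\varepsilon/3$ (controlling both discretization errors uniformly in $n$), then choose $n$ large so the finite-sum difference is below $\varepsilon/3$; a triangle inequality gives $\big|\int_\square f\,d\mu_n-\int_\square f\,d\mu\big|<\varepsilon$. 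Since $f\in C(\square)$ was arbitrary, $\mu_n\Rightarrow\mu$.

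\textbf{Main obstacle.} The only genuinely delicate point is ensuring the discretization error $\omega_f(\sqrt{2}\,k^{-m})$ is bounded \emph{uniformly in $n$}, which is automatic here because every cell $\Psi_{n,w}\square$ has the same diameter $\sqrt{2}\,k^{-m}$ independent of $n$ (all maps have contraction ratio exactly $1/k$). The exchange of limits — fixing $m$ before sending $n\to\infty$ — is legitimate precisely because this error is $n$-independent, so no subtlety in interchanging the limits arises. The convergence $\Psi_{n,w}\to\Psi_w$ for fixed $w$ is the one input that uses the hypothesis $K_n\to K$ together with the reordering convention from Lemma \ref{lemma72}, and it is an elementary consequence of $\Psi_{n,i}\to\Psi_i$ and composition of finitely many affine maps. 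Thus the proof is essentially a uniform discretization argument, and I expect it to be short.
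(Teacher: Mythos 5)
Your proposal is correct and follows essentially the same argument as the paper: both exploit the iterated self-similar decomposition $\mu_n=N^{-m}\sum_{w\in W_m}\mu_n\circ\Psi_{n,w}^{-1}$, bound the discretization error by the modulus of continuity of $f$ uniformly in $n$ (the paper uses the reference point $q_1$ and writes $Osc_f$ where you write $\omega_f$), and then pass to the limit in the finite sum using $\Psi_{n,w}\to\Psi_w$ from Lemma \ref{lemma72}. Your explicit handling of the order of limits ($m$ first, then $n$) is exactly the point the paper leaves implicit.
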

\begin{proof}
    By Lemma \ref{lemma72} (b), by properly reordering $\{\Psi_{n,i}\}_{1\leq i\leq N}$, we can assume $\Psi_{n,i}\to \Psi_i$ as $n\to\infty$ (in the finite dimensional linear space of affine mappings). Then, for each $f\in C(\square)$, we have
    \[N^{-m}\sum_{w\in W_m}f(\Psi_wq_1)=\lim\limits_{n\to\infty}N^{-m}\sum_{w\in W_m}f(\Psi_{n,w}q_1).\]
    In addition, for any $m\geq 0$, we have
    \[\begin{cases}
        \big|\int_K f(x)\mu(dx)-N^{-m}\sum_{w\in W_m}f(\Psi_wq_1)\big|\leq Osc_f(k^{-m}),\\
        \big|\int_{K_n} f(x)\mu_n(dx)-N^{-m}\sum_{w\in W_m}f(\Psi_{n,w}q_1)\big|\leq Osc_f(k^{-m}),\quad\hbox{ for every } n\geq 1,
    \end{cases}\]
    where $Osc_f(\eta)=\sup\big\{\big|f(x)-f(y)\big|:x,y\in \square\text{, }d(x,y)\leq\eta\big\}$, which is continuous at $0$ since $f$ is uniformly continuous. The proposition follows easily from the above estimates.
\end{proof}

Finally, we consider the geodesic metrics. In general, the geodesic
metric $d_{G,n}$ on $K_n$ (or $d_{G}$ on $K$) is complicated, so
it's important to have an equivalent but easier characterization of
the equicontinuity of $d_{G,n}$. In the following, we show it's
enough to consider the geodesic metric on $\bigcup_{i=1}^N\Psi_{n,i}\square$ instead.

\begin{proposition}\label{prop74}
    Let $K_n,n\geq 1$ be $\USC$'s with the same $k,N$ and assume $K_n\to K$. For $n\geq 1$, let $\tilde{K}_n=\bigcup_{i=1}^N\Psi_{n,i}\square$, and let $\tilde{d}_{G,n}$ be the geodesic metric on $\tilde{K}_n$. Then, the following (i),(ii), (iii) are equivalent.

    (i). $d_{G,n},n\geq1$ are equicontinuous.

    (ii). There do not exist $x,y\in \partial\square$ and $i\neq j$ so that \[\lim\limits_{n\to\infty}\Psi_{n,i}x=\lim\limits_{n\to\infty}\Psi_{n,j}y\text{, and } \liminf_{n\to\infty}d_{G,n}(\Psi_{n,i}x,\Psi_{n,j}y)>0.\]

    (iii). There do not exist $x,y\in \partial\square$ and $i\neq j$ so that \[\lim\limits_{n\to\infty}\Psi_{n,i}x=\lim\limits_{n\to\infty}\Psi_{n,j}y\text{, and } \liminf_{n\to\infty}\tilde{d}_{G,n}(\Psi_{n,i}x,\Psi_{n,j}y)>0.\]
\end{proposition}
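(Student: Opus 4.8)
The plan is to prove the cycle (i)$\Rightarrow$(ii)$\Rightarrow$(iii)$\Rightarrow$(i), after first recording a convenient reformulation of (i). Since $d\leq d_{G,n}$ and $d_{G,n}$ obeys the triangle inequality, for $p,q,p',q'\in K_n$ one has $|d_{G,n}(p,q)-d_{G,n}(p',q')|\leq d_{G,n}(p,p')+d_{G,n}(q,q')$; hence, writing $\omega(s)=\sup_{n\geq1}\sup\{d_{G,n}(p,q):p,q\in K_n,\ d(p,q)\leq s\}$, the equicontinuity in (i) is equivalent to $\omega(s)\to0$ as $s\to0^{+}$ (for the reverse implication one tests the diagonal pair $(p,p)$ against $(p,q)$). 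The same reformulation applies to $\tilde d_{G,n}$, with modulus $\tilde\omega$. Note also that by (\ref{eqn51}) the geodesic diameters of $K_n$ and $\tilde K_n$ are bounded by a constant depending only on $k$.

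First I would dispatch the two easy implications. For (i)$\Rightarrow$(ii): if (ii) fails there are $x,y\in\partial\square$ and $i\neq j$ with $\Psi_{n,i}x\to\Psi_ix=\Psi_jy\leftarrow\Psi_{n,j}y$ (recall $\Psi_{n,i}\to\Psi_i$), so $d(\Psi_{n,i}x,\Psi_{n,j}y)\to0$ while $\liminf_n d_{G,n}(\Psi_{n,i}x,\Psi_{n,j}y)=:c>0$; thus $\omega(s)\geq c/2$ for all small $s$, contradicting (i). For (ii)$\Rightarrow$(iii): since $x,y\in\partial\square\subset K_n$ and every curve in $K_n$ is a curve in $\tilde K_n$, we have $\tilde d_{G,n}\leq d_{G,n}$ on $K_n$, whence $\liminf_n\tilde d_{G,n}(\Psi_{n,i}x,\Psi_{n,j}y)\leq\liminf_n d_{G,n}(\Psi_{n,i}x,\Psi_{n,j}y)$, and (iii) follows at once from (ii).

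The substance is (iii)$\Rightarrow$(i), which I would prove contrapositively. Assuming $\omega(s)\not\to0$, there are $\epsilon>0$, indices $n_m\to\infty$ (bounded $n_m$ is excluded by Lemma \ref{lemma51}, which gives $\omega_n(s)\to0$ for each fixed $n$), and points $p_m,q_m\in K_{n_m}$ with $d(p_m,q_m)\to0$ but $d_{G,n_m}(p_m,q_m)\geq\epsilon$. The first key step is a \emph{self-similar descent}: using $d_{G,n}(\Psi_{n,i}a,\Psi_{n,i}b)\leq k^{-1}d_{G,n}(a,b)$ together with the uniform diameter bound, each time $p_m,q_m$ lie in a common level-one cell I pass to their preimages, which multiplies the geodesic lower bound by $k$ while keeping $d$ of the pair comparable; since this lower bound cannot exceed the diameter, after a number of descents bounded by $\log_k(C/\epsilon)$ the rescaled points $\tilde p_m,\tilde q_m$ lie in \emph{distinct} level-one cells $\Psi_{n_m,i}K_{n_m}$, $\Psi_{n_m,j}K_{n_m}$, still with $d(\tilde p_m,\tilde q_m)\to0$ and $d_{G,n_m}(\tilde p_m,\tilde q_m)\geq\epsilon$. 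Passing to a subsequence to fix $i\neq j$ and the descent depth, and using compactness, $\tilde p_m,\tilde q_m\to z^{\ast}\in\Psi_i\square\cap\Psi_j\square$; writing $z^{\ast}=\Psi_ix=\Psi_jy$ with $x,y\in\partial\square$ identifies the candidate seam $(x,y,i,j)$.

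It then remains to convert this subsequential, fractal obstruction into the \emph{persistent, solid} obstruction demanded by the negation of (iii), namely $\liminf_n\tilde d_{G,n}(\Psi_{n,i}x,\Psi_{n,j}y)>0$. Here I would exploit the boundary-inclusion axiom: because $\partial\square\subset K_n$, two level-one fractal cells already contain the shared boundary of their squares, $\Psi_{n,i}K_n\cap\Psi_{n,j}K_n\supseteq\Psi_{n,i}\square\cap\Psi_{n,j}\square$, so a seam can be crossed in $K_n$ wherever it can be crossed in $\tilde K_n$, while the portions of a geodesic interior to a single cell are governed by a scaled copy of the same problem. This lets me compare $d_{G,n_m}(\tilde p_m,\tilde q_m)$ with $\tilde d_{G,n_m}(\Psi_{n_m,i}x,\Psi_{n_m,j}y)$ up to within-cell routing, and to bootstrap: a uniformly small solid crossing cost at every seam would, fed back through the self-similar descent, force $\omega(s)\to0$, contrary to assumption. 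Finally, to upgrade the bad subsequence to a genuine $\liminf$, I would use that the relevant data are the finitely many converging positions $c_{n,i}\to c_i$ and the finitely many combinatorial adjacency types of the level-one squares near $z^{\ast}$: the solid crossing cost is continuous in the positions away from the critical (corner-contact) configurations, and the $\USC$ structure — a boundary-included connected frame at every scale, together with $D_4$-symmetry — rules out an oscillation in which the cost vanishes along one subsequence but stays bounded below along another. \textbf{This last point, ruling out oscillating seam-connectivity so that the obstruction is truly persistent, is the main obstacle}, and it is exactly where the geometry of $\USC$'s, rather than any soft metric-space argument, has to be used.
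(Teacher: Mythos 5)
Your treatment of (i)$\Rightarrow$(ii) and (ii)$\Rightarrow$(iii) is correct and coincides with the paper's, and your ``self-similar descent'' is in substance the paper's own argument: using (\ref{eqn51}) the paper notes that points at $d_{G,n_l}$-distance $\geq\eta$ must lie in different cells of a fixed level $m_0(\eta)$, takes the first level at which their addresses branch, and blows up by $\Psi_{n_l,v}^{-1}$ --- exactly your descent with depth bounded by $\log_k(C/\epsilon)$. The trouble starts after the seam $(x,y,i,j)$ is identified, and it is concentrated in two places where your proposal stays vague. First, ``up to within-cell routing'' hides a genuine issue: you need $d_{G,n_m}(\tilde p_m,\Psi_{n_m,i}x)\to 0$, and for two nearby points of a common cell such a uniform bound is precisely the equicontinuity you are denying, so it cannot be invoked for free. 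It holds here only because $x,y\in\partial\square$: by the boundary-inclusion axiom and the numbering convention (\ref{eqn23}), the cells of every level that meet $\partial\square$ occupy the \emph{same} grid positions in every $K_n$ (they tile the strips of width $k^{-m}$ along $\partial\square$), so near $\partial\square$ the metrics $d_{G,n}$ and $d$ are comparable uniformly in $n$; this is what the paper's phrase ``taking the advantage that $x\in\partial\square$'' means, and nothing in your proposal plays this role. Second, to reach (iii) rather than (ii) you need the reverse comparison $d_{G,n}\leq \sqrt{2}\,\tilde d_{G,n}$ at points of $\bigcup_{i'}\Psi_{n,i'}(\partial\square)$, which the paper obtains by a one-level surgery: replace each sub-arc of a $\tilde K_n$-geodesic crossing the interior of a square $\Psi_{n,i'}\square$ by an arc along $\Psi_{n,i'}(\partial\square)\subset K_n$, at the cost of a factor $\sqrt2$. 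Your ``scaled copy of the same problem'' points to a recursion that is neither needed nor obviously convergent.

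The decisive gap, however, is the step you yourself flag as the main obstacle, and it cannot be closed: the ``persistent'' statement you are aiming for is false. Take Example \ref{example96} with $z_n=0$ for even $n$ and $z_n=\frac{1}{14n}$ for odd $n$, so $K(z_n)\to K(0)$. Along odd $n$ the cell $\Psi_{n,25}\square=[\frac27+z_n,\frac37+z_n]\times[\frac17,\frac27]$ and its $\Gamma_{d_1}$-mirror $\Psi_{n,j}\square$ are disjoint, and any path in $K(z_n)$ joining their near-corners $\Psi_{n,25}q_4$ and $\Psi_{n,j}q_2$ must first descend through cell $25$ to its bottom edge, so $d_{G,n}(\Psi_{n,25}q_4,\Psi_{n,j}q_2)\geq\frac17$ while the Euclidean distance is $\sqrt2\,z_n\to0$; hence (i) fails. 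Yet $\Psi_{n,25}q_4=\Psi_{n,j}q_2$ for every even $n$, so $\liminf_{n\to\infty}d_{G,n}(\Psi_{n,25}q_4,\Psi_{n,j}q_2)=0$, and every other candidate pair (boundary--boundary, or boundary--interior along the lines $\{x_2=\frac l7\}$, $\{x_1=\frac l7\}$, which lie in every $K_n$) also has full-sequence $\liminf$ equal to $0$. So when (i) fails there need not exist any seam with $\liminf_{n\to\infty}>0$, and no appeal to $D_4$-symmetry or connectedness can rule out this oscillation. Note that the paper never claims otherwise: its proof concludes $\liminf_{l\to\infty}d_{G,n_l}(\Psi_{n_l,i}x,\Psi_{n_l,j}y)\geq k^m\eta$ \emph{along the constructed subsequence} only, i.e.\ it negates (ii) read with a subsequential $\liminf$ (equivalently, with $\limsup_{n\to\infty}$), and that is the reading under which (i), (ii), (iii) are genuinely equivalent. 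The repair of your plan is therefore not to work harder on persistence but to weaken the target: prove your cycle for the $\limsup$ forms of (ii) and (iii); with the two devices described above, your descent then completes the argument exactly as in the paper.
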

\begin{proof}
    (i)$\Rightarrow$(ii). Since $d_{G,n}$ are equicontinuous, for any $x_n,y_n\in K_n,n\geq 1$ such that $d(x_n,y_n)\to 0$, we have $d_{G,n}(x_n,y_n)\to 0$.

    (ii)$\Rightarrow$(iii). This is trivial since $d_{G,n}(\Psi_{n,i}x,\Psi_{n,j}y)\geq \tilde{d}_{G,n}(\Psi_{n,i}x,\Psi_{n,j}y)$ for every $1\leq i,j\leq N$ and $x,y\in \partial\square$.

    (iii)$\Rightarrow$(ii). Let $x,y\in \partial\square$ and $i\neq j$, and let $\gamma_n$ be a geodesic path connecting $\Psi_{n,i}x,\Psi_{n,j}y$ in $\tilde{K}_n$. Then, we can find a path $\gamma'_n$ in $\bigcup_{i'=1}^N \Psi_{i'}(\partial\square)\subset K_n$ connecting $\Psi_{n,i}x,\Psi_{n,j}y$ in $K_n$, by replacing line segments going through the interior of $\Psi_{n,i'}\square$ with line segments lying along $\Psi_{n,i'}(\partial\square)$. Then $length(\gamma'_n)\leq \sqrt{2}\cdot length(\gamma_n)$, so $\sqrt{2}\cdot\tilde{d}_{G,n}(\Psi_{n,i}x,\Psi_{n,j}y)\geq d_{G,n}(\Psi_{n,i}x,\Psi_{n,j}y)$. This observation gives (iii)$\Rightarrow$(ii).

    (ii)$\Rightarrow$(i). We assume by contradiction that $d_{G,n},n\geq 1$ are not equicontinuous. We will show there are $x,y\in \partial\square$, $i\neq j$, so that $\lim\limits_{n\to\infty}\Psi_{n,i}x=\lim\limits_{n\to\infty}\Psi_{n,j}y,$ and  $\liminf\limits_{n\to\infty}d_{G,n}(\Psi_{n,i}x,\Psi_{n,j}y) >0$, which contradicts (ii).

    Since  $d_{G,n},n\geq 1$ are not equicontinuous, we can find $x_l,y_l\in K_{n_l}, n_1<n_2<\cdots$, so that $d(x_l,y_l)\to 0$ and $\inf_{l\geq 1}d_{G,n_l}(x_l,y_l)>\eta$ for some $\eta>0$. By passing to a subsequence, by compactness, we can in addition assume that $x_l\to z$ for some $z\in K$, so $y_l\to z$ as well. Next, noticing that by (\ref{eqn51}), $x_l,y_l$ belong to different $m_0$-cells in $K_{n_l}$ for some $m_0$ depending only on  $\eta$. So by passing to a further subsequence, we can in addition assume that  $x_l\in \Psi_{n_l,w}K_{n_l}$ and $y_l\in \Psi_{n_l,w'}K_{n_l}$ for some $w\neq w'\in W_{m_0}$ and for any $l\geq 1$. Then, we choose $m<m_0,v\in W_m$ so that $w_1w_2\cdots w_m=w'_1w'_2\cdots w'_m=v$ and $w_{m+1}\neq w'_{m+1}$. Let $i=w_{m+1},j=w'_{m+1}$ and $x=\Psi_{i}^{-1}\Psi_v^{-1}z$, $y=\Psi_{j}^{-1}\Psi_v^{-1}z$.

    Clearly, $z$ is in the intersection of two $(m+1)$-cells, so $x,y\in \partial\square$. In addition, $\Psi_{n,i}x\to \Psi_v^{-1}z$, $\Psi_{n,j} y\to \Psi_v^{-1}z$ so $\lim_{n\to\infty}\Psi_{n,i}x=\lim_{n\to\infty}\Psi_{n,j}y$. Moreover, let $\tilde{x}_l=\Psi_{n_l,i}^{-1}\Psi_{n_l,v}^{-1}x_l$ and $\tilde{y}_l=\Psi_{n_l,j}^{-1}\Psi_{n_l,v}^{-1}y_l$, then $\tilde{x}_l\to x$, and it is easy to check that $d_{G,n_l}(\tilde{x}_l,x)\to 0$ by taking the advantage that $x\in \partial\square$. Similarly, $\tilde{y}_l\to y$ and $d_{G,n_l}(\tilde{y}_l,y)\to 0$. Noticing that $d_{G,n_l}(\Psi_{n_l,i}\tilde{x}_l,\Psi_{n_l,j}\tilde{y}_l)\geq k^{m}d_{G,n_l}(x_l,y_l)\geq k^{m}\eta$, we have $\liminf_{l\to\infty}d_{G,n_l}(\Psi_{n_l,i}x,\Psi_{n_l,j}y)\geq k^{m}\eta$.  Thus we have proved (ii)$\Rightarrow$(i).
\end{proof}

\begin{example}\label{example96}
    Let $k=7$ and $N=4(k-1)+8=32$. For $1\leq i\leq 24$, $\Psi_i$ is defined as usual. Define $\Psi_{25}(x)=\frac{1}{7}x+(z+\frac{2}{7},\frac{1}{7})$, with $0\leq z\leq \frac{1}{14}$. To satisfy the symmetry conditions, the contraction maps $\Psi_i$ for $25\leq i\leq 32$ are uniquely determined. We write $K(z)$ for the unique compact $K$ such that  $K=\bigcup_{i=1}^{32}\Psi_iK$, see Figure \ref{figure5}. We can see there are two cases.

    \emph{Case 1. $z\neq0$, $z\neq \frac{1}{14}$.} In this case, if $z_n\to z$, then the geodesic metric $d_{G,n}$ on $K(z_n)$, $n\geq 1$ are equicontinuous.

    \emph{Case 2. $z=0$ or $z=\frac{1}{14}$.} In this case, if $z_n\to z$, then the geodesic metric $d_{G,n}$ on $K(z_n)$, $n\geq 1$ are not equicontinuous (unless $z_n=z$ for all $n\geq n_0$ for some $n_0$).
\end{example}

\begin{figure}[htp]
    \includegraphics[width=4.9cm]{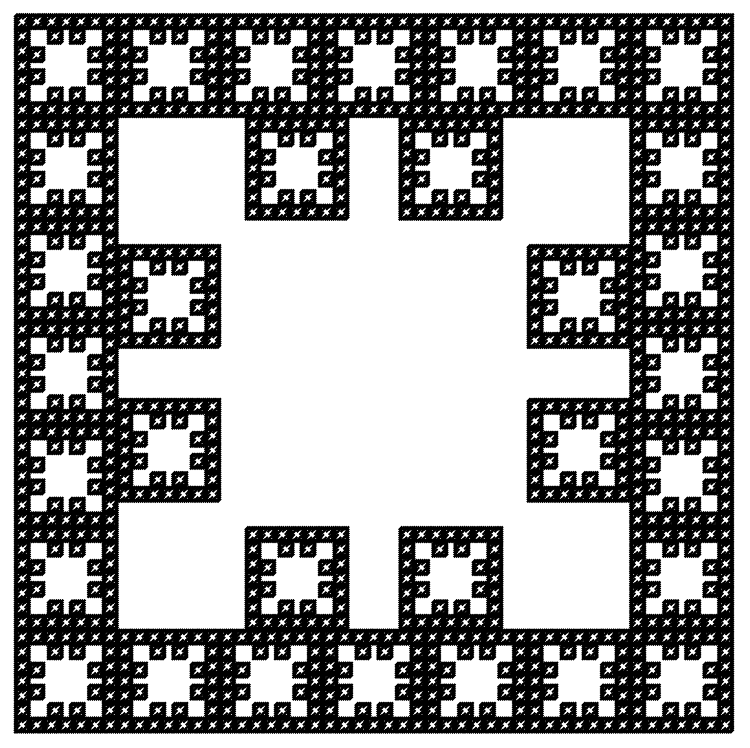}\quad
    \includegraphics[width=4.9cm]{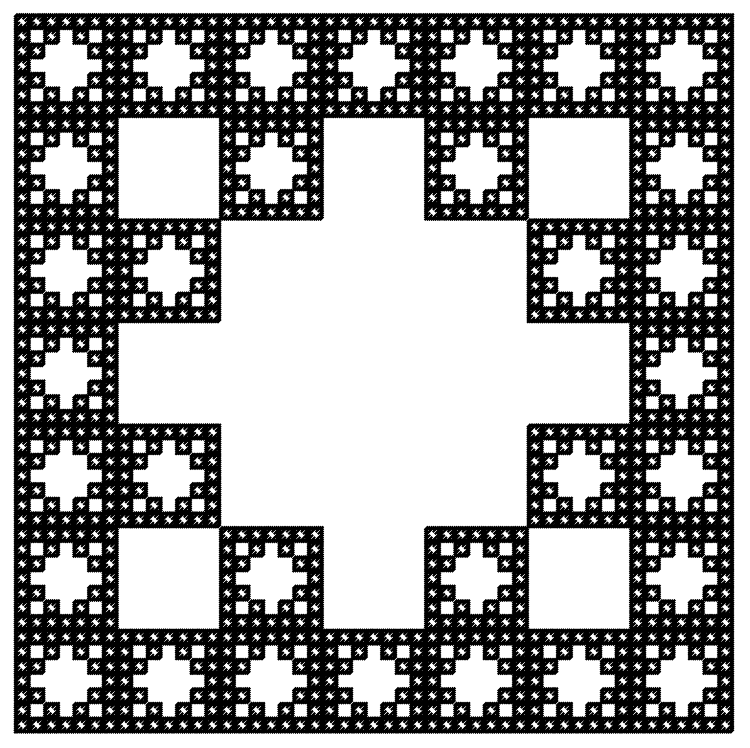}\quad
    \includegraphics[width=4.9cm]{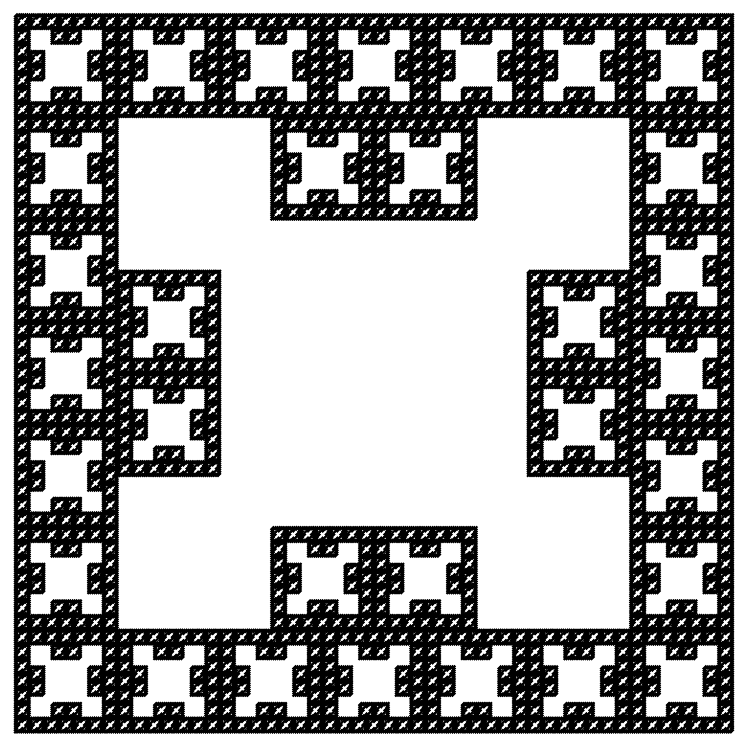}
    \begin{picture}(0,0)
        \put(-163,1.1){$z=\frac{1}{28}$}
        \put(-10,1.1){$z=0$}
        \put(140,1.1){$z=\frac{1}{14}$}
    \end{picture}
    \caption{The $\USC$ $K(z)$.}
    \label{figure5}
\end{figure}

\begin{lemma}\label{lemma76}
    Let $K_n,n\geq 1$ and $K$ be $\USC$'s with the same $k,N$, and assume that $K_n\to K$ and $d_{G,n},n\geq 1$ are equicontinuous. Then, there exists $C>0$ depending on $K$ so that
    \[
    \limsup\limits_{n\to\infty}d_{G,n}(x_n,y_n)\leq C\,d_G(x,y)
    \]
    for any $x,y\in K$ and $x_n,y_n\in K_n,n\geq 1$ with $x_n\to x,y_n\to y$ as $n\to\infty$.
\end{lemma}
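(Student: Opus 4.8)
The plan is to build, for each pair $x_n,y_n$, an explicit path in $K_n$ by following a near-geodesic of $K$ at the scale of $m$-cells, and then to send $m\to\infty$ only at the very end. Two bounds that are uniform in $n$ will drive everything. First, a uniform diameter bound: since $\partial_0K_n=\partial\square\subset K_n$ for every $n$, estimate (\ref{eqn51}) gives $d_{G,n}(u,\partial\square)\le C'$ with $C'$ depending only on $k,N$, while any two points of $\partial\square$ are joined inside $\partial\square\subset K_n$ by an arc of length $\le 2$; hence $M:=\sup_{n}\diam(K_n,d_{G,n})<\infty$ with $M$ depending only on $k,N$. By the self-similar scaling of the geodesic metric, for any $w$ with $|w|=m$ and any two points of $\Psi_{n,w}K_n$ their $d_{G,n}$-distance is at most $k^{-m}M$. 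Second, I record the equicontinuity modulus $\phi(s)=\sup\{d_{G,n}(u,v):n\ge1,\ u,v\in K_n,\ d(u,v)\le s\}$, which is finite (by $M$) and continuous at $0$ (by hypothesis), and which satisfies $d_{G,n}\le\phi\circ d$.

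Fix $\varepsilon>0$ and a near-geodesic $\gamma$ in $K$ from $x$ to $y$ with $\mathrm{length}(\gamma)\le d_G(x,y)+\varepsilon=:L'$. For each $m$, a standard chaining argument (partition $\gamma$ into $\lceil L'k^m\rceil$ subarcs of length $\le k^{-m}$, each of which meets only a bounded number of $m$-cells of $K$, and concatenate the resulting cell itineraries) produces a chain $w^{(0)},\dots,w^{(l)}\in W_m$ of $K$ with $x\in\Psi_{w^{(0)}}K$, $y\in\Psi_{w^{(l)}}K$, consecutive cells intersecting, and $l\le C_0(L'k^m+1)$, where $C_0$ depends only on the geometry of $K$. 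For each $i$ choose a shared point $p_i\in\Psi_{w^{(i)}}K\cap\Psi_{w^{(i+1)}}K$; since cell intersections lie on cell boundaries, $p_i=\Psi_{w^{(i)}}(a_i)=\Psi_{w^{(i+1)}}(b_i)$ with $a_i,b_i\in\partial_0K=\partial\square\subset K_n$. I then transfer the chain to $K_n$: inside each $\Psi_{n,w^{(i)}}K_n$ I travel from the incoming point $\Psi_{n,w^{(i)}}(b_{i-1})$ to the outgoing point $\Psi_{n,w^{(i)}}(a_i)$ (cost $\le k^{-m}M$), I cross from cell $i$ to cell $i+1$ from $\Psi_{n,w^{(i)}}(a_i)$ to $\Psi_{n,w^{(i+1)}}(b_i)$ (cost $\delta_n^{(i)}$), and two end segments join $x_n$ to $\Psi_{n,w^{(0)}}(a_0)$ and $\Psi_{n,w^{(l)}}(b_{l-1})$ to $y_n$.

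The crux is that the crossing costs vanish. By the remark following Lemma \ref{lemma72} we have $\Psi_{n,w}\to\Psi_w$, so for each \emph{fixed} $i$ both $\Psi_{n,w^{(i)}}(a_i)$ and $\Psi_{n,w^{(i+1)}}(b_i)$ converge to $p_i$; thus $d(\cdot,\cdot)\to0$ and equicontinuity forces $\delta_n^{(i)}=d_{G,n}\big(\Psi_{n,w^{(i)}}(a_i),\Psi_{n,w^{(i+1)}}(b_i)\big)\to0$ as $n\to\infty$. The two end segments cost at most $\phi\big(d(x_n,\Psi_{n,w^{(0)}}(a_0))\big)$ and $\phi\big(d(\Psi_{n,w^{(l)}}(b_{l-1}),y_n)\big)$, whose $\limsup_n$ are each $\le\phi(\sqrt2\,k^{-m})$ since the relevant $d$-distances converge to values $\le\sqrt2\,k^{-m}$ (as $x,p_0$ and $y,p_{l-1}$ lie in common $m$-cells of $K$). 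For fixed $m$ there are only finitely many terms, so by subadditivity of $\limsup$,
\[
\limsup_{n\to\infty} d_{G,n}(x_n,y_n)\ \le\ l\,k^{-m}M + 2\phi(\sqrt2\,k^{-m})\ \le\ C_0(L'+k^{-m})M + 2\phi(\sqrt2\,k^{-m}).
\]
Letting $m\to\infty$ kills both the $k^{-m}$ and the modulus terms, and then $\varepsilon\to0$ gives $\limsup_n d_{G,n}(x_n,y_n)\le C_0M\,d_G(x,y)$, so $C=C_0M$ works. (The degenerate case $x=y$ is immediate: then $d(x_n,y_n)\to0$ and $d_{G,n}(x_n,y_n)\le\phi(d(x_n,y_n))\to0$.)

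The main obstacle is exactly the vanishing of the crossing costs $\delta_n^{(i)}$, and this is the unique place the equicontinuity hypothesis is essential: two $m$-cells of $K$ that merely touch at a point force the corresponding points of $K_n$ to be $d$-close, yet \emph{without} equicontinuity their $d_{G,n}$-distance can remain bounded away from $0$, which is precisely the pathology of Case 2 of Example \ref{example96}. The remaining technical points are the linear control $l\le C_0(L'k^m+1)$ of the number of cells along a near-geodesic and the correct interchange of the limits in $n$ and $m$ (taking $\limsup_n$ first for each fixed $m$, then $m\to\infty$).
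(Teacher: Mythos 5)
Your proof is correct, and it reaches the conclusion by a genuinely different chaining strategy than the paper, although the engine driving both arguments is the same: points are transferred between $K$ and the $K_n$ only through images of $\partial\square$ (which lie in every $K_n$ by the boundary-included condition), within-cell travel is controlled by the scaled uniform diameter bound coming from \eqref{eqn51}, and equicontinuity together with $\Psi_{n,w}\to\Psi_w$ (Lemma \ref{lemma72}) kills the finitely many cell-crossing costs. The difference is in how the chain of cells is produced. The paper fixes the scale $m$ adapted to the Euclidean distance, $c_0k^{-m-1}\le d(x,y)<c_0k^{-m}$, and invokes (A3) to obtain a chain of just \emph{three} $m$-cells $\Psi_wK,\Psi_{w''}K,\Psi_{w'}K$ joining $x$ to $y$; the within-cell costs $3(C_1+2)k^{-m}$ are then directly comparable to $d(x,y)\le d_G(x,y)$, and a single limit in $n$ finishes the proof. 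You instead work at an arbitrary scale $m$, chain $O\big(d_G(x,y)k^m\big)$ cells along a near-geodesic of $K$ (via the area bound and the connectedness of the intersection graph of cells meeting each subarc), pay $k^{-m}M$ per cell, and dispose of the residual terms by sending $m\to\infty$ after $\limsup_n$. Your route avoids (A3) entirely and exploits $d_G$ on $K$ directly, at the price of the chaining construction and a more delicate ordering of limits ($n$, then $m$, then $\varepsilon$), which you handle correctly; the paper's use of (A3) buys a bounded-length chain and a one-shot estimate. Two harmless imprecisions in your write-up: the chain $w^{(0)},\dots,w^{(l)}$ has $l+1$ cells, so the within-cell cost is $(l+1)k^{-m}M$ rather than $l\,k^{-m}M$, and since $\phi$ need only be monotone and continuous at $0$, the bound on the end segments should read $\limsup_n\phi(\cdot)\le\phi(\sqrt2\,k^{-m}+\epsilon)$ for every $\epsilon>0$; both are absorbed when $m\to\infty$.
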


\begin{proof}
    If $x=y$, then $\lim\limits_{n\to\infty}d_{G,n}(x_n,y_n)=0$ by the equicontinuity assumption.

    Recall that $c_0$ is the constant appearing in (A3). If $d(x,y)\geq c_0k^{-1}$, then since $d_{G,n}(x_n,y_n)\leq 2C_1+2$ where $C_1$ is the constant $C'$  in \eqref{eqn51}, we have $\limsup\limits_{n\to\infty}d_{G}(x_n,y_n)\leq k(2C_1+2)c_0^{-1}\,d_G(x,y)$.

Finally, assume $d(x,y)\in [c_0k^{-m-1},c_0k^{-m})$ for some $m\geq
1$. By using (A3), we can find $w,w',w''\in W_m$ and
 $x'_n\in \Psi_{n,w}K_n, y'_n\in
\Psi_{n,w'}K_n$ for large enough $n$, such that  $x\in \Psi_wK, y\in
\Psi_{w'}K, \Psi_{w}K\cap \Psi_{w''}K\neq \emptyset, \Psi_{w'}K\cap
\Psi_{w''}K\neq\emptyset$  and $\lim_{n\to \infty}d(x'_n,x) =
\lim_{n\to \infty}d(y'_n,y) = 0$. Choose
    \[
    z\in \Psi_wK\cap \Psi_{w''}K\hbox{ and }z'\in \Psi_{w''}K\cap \Psi_{w'}K,
    \]
    and let
    \begin{align*}
        z_{n,w}=\Psi_{n,w}\circ\Psi_w^{-1}(z),&\ z_{n,w''}=\Psi_{n,w''}\circ \Psi_{w''}^{-1}(z),\\\ z'_{n,w''}=\Psi_{n,w''}\circ\Psi_{w''}^{-1}(z'),&\ z'_{n,w'}=\Psi_{n,w'}\circ \Psi_{w'}^{-1}(z').
    \end{align*}
    Then, by Lemma \ref{lemma72} (b) and the equicontinuity assumption, we know that
    \begin{equation}\label{eqn71}
\begin{aligned}
\lim\limits_{n\to\infty}d_{G,n}(x_n,x'_n)=0,& \
            \lim\limits_{n\to\infty}d_{G,n}(y_n,y'_n)=0,\\
\lim\limits_{n\to\infty}d_{G,n}(z_{n,w},z_{n,w''})=0,&\
            \lim\limits_{n\to\infty}d_{G,n}(z'_{n,w''},z'_{n,w''})=0.
\end{aligned}
    \end{equation}
    Hence,
    \begin{align*}
        &\limsup_{n\to\infty}d_{G,n}(x_n,y_n)\\
        \leq&\limsup_{n\to\infty}\big(d_{G,n}(x_n,x'_n) + d_{G,n}(x'_n,z_{n,w})+d_{G,n}(z_{n,w},z_{n,w''})\\ &\qquad\qquad+d_{G,n}(z_{n,w''},z'_{n,w''})+d_{G,n}(z'_{n,w''},z'_{n,w'})+d_{G,n}(z'_{n,w'},y'_n) + d_{G,n}(y'_n,y_n) \big)\\
        =&\limsup\limits_{n\to\infty}\big(d_{G,n}(x'_n,z_{n,w})+d_{G,n}(z_{n,w''},z'_{n,w''})+ d_{G_n}(z'_{n,w'},y'_n)\big)\\
        \leq &3(C_1+2)k^{-m}\\
        \leq &3(C_1+2)k^{-m}\,c_0^{-1}k^{m+1}\,d(x,y)\\
        \leq &3(C_1+2)kc_0^{-1}\, d_{G,n}(x,y),
    \end{align*}
    where we use \eqref{eqn71} in the equality,  use \eqref{eqn51} in the second inequality,  use the fact that $d(x,y)\geq c_0k^{-m-1}$ in the third inequality, and the last inequality is due to the fact that $d(x,y)\leq d_{G,n}(x,y)$.
\end{proof}

\subsection{Proof of Theorem \ref{thm71}}\label{sec72} We prove Theorem \ref{thm71} in this subsection.

\begin{lemma}\label{lemma77}
    Let $K_n,n\geq 1$ and $K$ be $\USC$'s with the same $k,N$, and assume that $K_n\to K$ and $d_{G,n},n\geq 1$ are equicontinuous. Then, for each subsequence, we can find a further subsequence $n_l$, $l\geq 1$, and a resistance metric $R_\infty$ on $K$ so that $R_{n_l}\rightarrowtail R_\infty$.

    As a consequence, $(\mcE_{n_l},\mcF_{n_l})$ $\Gamma$-converges to $(\mcE_\infty,\mcF_\infty)$ on $C(\square)$ as $l\to \infty$, where $(\mcE_\infty,\mcF_\infty)$ is the form associated with $R_\infty$. In addition, $(\mcE_\infty,\mcF_\infty)$ is strongly local and regular.

    Finally, there is $\frac2k\leq r_\infty\leq\frac{N}{k^2}$ so that $r_{n_l}\to r_\infty$, where $r_{n}$ is the renormalization factor of $(\mcE_n,\mcF_n)$. Moreover, there exist $C_1,C_2\in (0,\infty)$ so that
    \[
    C_1d(x,y)^{\theta_\infty}\leq R_\infty(x,y)\leq C_2d(x,y)^{\theta_\infty}\ \hbox{ for every }x,y\in K,\]
    where $\theta_\infty=-\frac{\log r_\infty}{\log k}$.
\end{lemma}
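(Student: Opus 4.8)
The plan is to construct $R_\infty$ by the compactness result Proposition \ref{thmb3}, transfer all the structural properties through the $\Gamma$-convergence machinery of Subsection \ref{app1}, and finally push the quantitative estimates of Theorem \ref{thm52} to the limit with the aid of Lemma \ref{lemma76}.

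First I would record that the renormalization factors stay in a fixed compact interval: by Lemma \ref{lemma410}, $\frac2k\le r_n\le\frac{N}{k^2}$ for every $n$, so the exponents $\theta_n=-\log r_n/\log k$ range in a bounded interval $[\theta_{\min},\theta_{\max}]\subset(0,\infty)$. Combining the estimate $C_1 d_{G,n}(x,y)^{\theta_n}\le R_n(x,y)\le C_2 d_{G,n}(x,y)^{\theta_n}$ of Theorem \ref{thm52} (constants depending only on $k$) with the trivial inequality $d\le d_{G,n}$ and the equicontinuity of $\{d_{G,n}\}$, I obtain uniform two-sided bounds $\psi_1(d(x,y))\le R_n(x,y)\le\psi_2(d(x,y))$. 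For the lower bound I use $d_{G,n}\ge d$ and the boundedness of $\theta_n$ to take $\psi_1(s)=C_1\min\{s^{\theta_{\min}},s^{\theta_{\max}}\}$; for the upper bound I use the equicontinuity modulus $\omega(s)=\sup\{d_{G,n}(x,y):n\ge1,\,d(x,y)\le s\}$, which is continuous at $0$ with $\omega(0)=0$, to take $\psi_2(s)=C_2\max\{\omega(s)^{\theta_{\min}},\omega(s)^{\theta_{\max}}\}$ (replacing $\omega$ by a continuous increasing majorant if necessary). These $\psi_1,\psi_2$ satisfy the hypotheses of Proposition \ref{thmb3}, so some subsequence satisfies $R_{n_l}\rightarrowtail R_\infty\in\mathcal{RM}(K)$. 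Passing to a further subsequence (legitimate since $r_n$ lies in a compact interval) I may simultaneously arrange $r_{n_l}\to r_\infty\in[\frac2k,\frac{N}{k^2}]$, and set $\theta_\infty=-\log r_\infty/\log k$.

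Next I would harvest the consequences. The $\Gamma$-convergence of $(\mcE_{n_l},\mcF_{n_l})$ to $(\mcE_\infty,\mcF_\infty)$ on $C(\square)$ is immediate from Proposition \ref{thmb5} applied to $R_{n_l}\rightarrowtail R_\infty$. Regularity of $(\mcE_\infty,\mcF_\infty)$ is automatic: the bounds inherited from Proposition \ref{thmb3} show $R_\infty$ is jointly continuous with respect to $d$, so $(K,R_\infty)$ is compact, whence $(\mcE_\infty,\mcF_\infty)$ is regular by \cite[Corollary 6.4]{ki4}. Each $(\mcE_n,\mcF_n)$ is strongly local by self-similarity, so Corollary \ref{coroc7} transfers locality to $(\mcE_\infty,\mcF_\infty)$, and since the limit process is a diffusion with continuous paths and no killing part, the form is strongly local.

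Finally, for the power bound, I fix $x,y\in K$ and choose $x_{n_l},y_{n_l}\in K_{n_l}$ with $x_{n_l}\to x$, $y_{n_l}\to y$, so that $R_{n_l}(x_{n_l},y_{n_l})\to R_\infty(x,y)$. The lower bound follows from $R_{n_l}(x_{n_l},y_{n_l})\ge C_1 d_{G,n_l}(x_{n_l},y_{n_l})^{\theta_{n_l}}\ge C_1 d(x_{n_l},y_{n_l})^{\theta_{n_l}}$ and $\theta_{n_l}\to\theta_\infty$, giving $R_\infty(x,y)\ge C_1 d(x,y)^{\theta_\infty}$. The upper bound is the only substantive step: from $R_{n_l}(x_{n_l},y_{n_l})\le C_2 d_{G,n_l}(x_{n_l},y_{n_l})^{\theta_{n_l}}$ together with Lemma \ref{lemma76}, which gives $\limsup_l d_{G,n_l}(x_{n_l},y_{n_l})\le C\,d_G(x,y)$, and $\theta_{n_l}\to\theta_\infty$, I conclude $R_\infty(x,y)\le C_2\big(C\,d_G(x,y)\big)^{\theta_\infty}$, after which $d_G\asymp d$ on $K$ (Lemma \ref{lemma51}) yields the stated bound with adjusted constants. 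The genuine obstacle is precisely this control of $\limsup_l d_{G,n_l}(x_{n_l},y_{n_l})$ by $d_G(x,y)$: it is exactly where the equicontinuity hypothesis enters (through Lemma \ref{lemma76}), the same hypothesis that guarantees the uniform majorant $\psi_2$ in the first step; the rest is bookkeeping of the $n$-dependent exponents $\theta_n$ and of the single subsequence realizing both $R_{n_l}\rightarrowtail R_\infty$ and $r_{n_l}\to r_\infty$.
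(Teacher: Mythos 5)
Your proposal is correct and follows essentially the same route as the paper: uniform two-sided bounds on $R_n$ from Theorem \ref{thm52} combined with $d\leq d_{G,n}$ and the equicontinuity modulus to invoke Proposition \ref{thmb3}, then Proposition \ref{thmb5} and Corollary \ref{coroc7} for $\Gamma$-convergence and (strong) locality, a further subsequence for $r_{n_l}\to r_\infty$ using the bounds of Lemma \ref{lemma410}, and finally the limit estimates via Lemma \ref{lemma76} and Lemma \ref{lemma51}. The only differences are cosmetic (your $\min/\max$ over $\{\theta_{\min},\theta_{\max}\}$ versus the paper's fixed exponents $\theta^*,\theta_*$, and your more explicit construction of $\psi_1,\psi_2$).
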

\begin{proof}
    For any $n\geq 1$ and $x,y\in K_n$, by using Theorem \ref{thm52} and noticing that $d(x,y)\leq d_{G,n}(x,y)$, we have
    \[C_3\cdot d^{\theta^*}(x,y)\leq R_n(x,y)\leq C_4\cdot d_{G,n}^{\theta_*}(x,y),\]
    with $\theta_n=-\frac{\log r_n}{\log k}$, $\theta^*=\sup_{n\geq 1}\theta_n\geq 2-\frac{\log N}{\log k}$ and $\theta_*=\inf_{n\geq 1}\theta_n<1$  (by Lemma \ref{lemma410}) for some constants $C_3,C_4>0$. Then, by the assumption  $d_{G,n},n\geq 1$ are equicontinuous, all the assumptions (uniform lower and upper bound estimate of $R_n,n\geq 1$) of Theorem \ref{thmb3} are satisfied, so one can find a further subsequence (of the subsequence in the statement of the lemma) such that $R_{n_l}\rightarrowtail R_\infty$, where $R_\infty$ is a resistance metric on $K$.

    Then, using Theorem \ref{thmb5}, we see $(\mcE_{n_l},\mcF_{n_l})$ $\Gamma$-converges  to $(\mcE_\infty,\mcF_\infty)$ on $C(\square)$, the form on $K$ associated with $R_\infty$. By Corollary \ref{coroc7}, we know that $(\mcE_\infty,\mcF_\infty)$ is local (hence strongly local since it's a resistance form on a compact space).

    Finally, by passing to a further subsequence if necessary, one has $r_{n_l}\to r_\infty$ for some $r_\infty$. Noticing that by Lemma \ref{lemma410} we always have $\frac{2}{k}\leq r_{n_l}\leq \frac{N}{k^2}$, so $r_\infty$ has the same bounds. Moreover, applying  Theorem \ref{thm52} to each $K_{n_l},l\geq 1$ and by taking limit, we can see that
    \begin{align*}
        &R_\infty(x,y)=\lim\limits_{l\to\infty}R_{n_l}(x_{n_l},y_{n_l})\geq C_5\lim_{l\to\infty}d(x_{n_l},y_{n_l})^{\theta_{n_l}}=C_5d(x,y)^{\theta_\infty},\\
        &R_\infty(x,y)=\lim\limits_{l\to\infty}R_{n_l}(x_{n_l},y_{n_l})\leq C_6\lim_{l\to\infty}d_{G,n_l}(x_{n_l},y_{n_l})^{\theta_{n_l}}\leq C_6\big(C_7d_G(x,y)\big)^{\theta_\infty},
    \end{align*}
    for any $x,y\in K$ and $x_{n_l},y_{n_l}\in K_{n_l}$, $l\geq 1$ such that $x_{n_l}\to x$, $y_{n_l}\to y$ as $l\to\infty$, where we also use Lemma \ref{lemma76} in the last inequality. The desired estimate of $R_\infty(x,y)$ then follows by using Lemma \ref{lemma51}.
\end{proof}

To prove Theorem \ref{thm71}, we need to verify $R_\infty=R$. For convenience,  in lemmas in the rest of this section, let us assume $R_n\rightarrowtail R_\infty$, $r_n\to r_\infty$, and write $(\mcE_\infty,\mcF_\infty)$ for the associated resistance form on $K$.

We will apply the uniqueness theorem, Theorem \ref{thm61}. The main difficulty is to verify that $(\mcE_\infty,\mcF_\infty)$ is a self-similar form. Nevertheless, it is not hard to firstly see a partial result.

\begin{lemma}\label{lemma78}
    Let $K_n,n\geq 1$ and $K$ be $\USC$'s with the same $k,N$. In addition, assume $R_n\rightarrowtail R_\infty$ for some resistance metric $R_\infty$ on $K$. Let $f\in C(K)$ and assume $f|_{\partial_1K}=0$, then  $\mcE_\infty(f)=\sum_{i=1}^N r_\infty^{-1}\mcE_\infty(f\circ \Psi_i)$. In particular, $f\in \mcF_\infty$ if and only if $f\circ \Psi_i\in \mcF_\infty$ for every $1\leq i\leq N$.
\end{lemma}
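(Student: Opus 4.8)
The plan is to establish the identity as two inequalities in $[0,\infty]$, using the convention of Remark 2 before Definition \ref{def3.1} that $\mcE_\infty(g)=\infty$ for $g\in C(K)\setminus\mcF_\infty$. Once both $\mcE_\infty(f)\le\sum_{i=1}^N r_\infty^{-1}\mcE_\infty(f\circ\Psi_i)$ and the reverse inequality are known, the displayed identity and the ``in particular'' statement follow at once: $\mcE_\infty(f)<\infty$ precisely when every $\mcE_\infty(f\circ\Psi_i)<\infty$, and membership in $\mcF_\infty$ is exactly finiteness of energy. Throughout I use that $R_n\rightarrowtail R_\infty$ gives, via Theorem \ref{thmb5}, that $(\mcE_n,\mcF_n)$ $\Gamma$-converges to $(\mcE_\infty,\mcF_\infty)$ on $C(\square)$, that $r_n\to r_\infty$, and the key geometric fact that $\partial_0 K_n=\partial_0 K=\bigcup_{i=1}^4 L_i$ is the \emph{same} fixed set for all $n$, while $\partial_1 K_n=\bigcup_i\Psi_{n,i}\partial_0K_n$.

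The lower bound $\mcE_\infty(f)\ge\sum_i r_\infty^{-1}\mcE_\infty(f\circ\Psi_i)$ holds for every $f\in C(K)$ and is the easy direction. Assuming $f\in\mcF_\infty$ (else nothing to prove), take a recovery sequence $g_n\in C(\square)$ with $g_n\rightrightarrows g$, $g|_K=f$, and $\mcE_n(g_n|_{K_n})\to\mcE_\infty(f)$. Since $\Psi_{n,i}\to\Psi_i$ uniformly, $g_n\circ\Psi_{n,i}\rightrightarrows g\circ\Psi_i$ with $(g\circ\Psi_i)|_K=f\circ\Psi_i$, so the $\Gamma$-liminf inequality gives $\mcE_\infty(f\circ\Psi_i)\le\liminf_n\mcE_n(g_n\circ\Psi_{n,i})$. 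Feeding this into the self-similar identity for $(\mcE_n,\mcF_n)$ and using $r_n\to r_\infty$,
\[
\mcE_\infty(f)=\lim_n\sum_{i=1}^N r_n^{-1}\mcE_n(g_n\circ\Psi_{n,i})\ge\sum_{i=1}^N r_\infty^{-1}\liminf_n\mcE_n(g_n\circ\Psi_{n,i})\ge\sum_{i=1}^N r_\infty^{-1}\mcE_\infty(f\circ\Psi_i).
\]

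The upper bound is where the hypothesis $f|_{\partial_1K}=0$ enters, and its proof is the main obstacle. The idea is to build a recovery sequence for $f$ by \emph{gluing} cell-wise recovery sequences for $f_i:=f\circ\Psi_i$; since $f|_{\partial_1K}=0$ forces $f_i|_{\partial_0K}=0$, the pieces can be arranged to vanish on $\partial_0 K$ and hence glue continuously across $\partial_1 K_n$. Concretely, assuming each $f_i\in\mcF_\infty$, pick recovery sequences $g_{n,i}\in C(\square)$ for $f_i$; their traces $b_{n,i}:=g_{n,i}|_{\partial_0 K}$ converge uniformly to $f_i|_{\partial_0K}=0$. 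Let $h_{n,i}\in\mcF_n$ be the $\mcE_n$-harmonic extension of $b_{n,i}$ (the energy minimizer with those values on $\partial_0 K_n$) and set $\hat g_{n,i}:=g_{n,i}|_{K_n}-h_{n,i}$. The two crucial points are: (i) by the Markov property (RF5) and the maximum principle, $\|h_{n,i}\|_\infty\le\max_{\partial_0K}|b_{n,i}|\to 0$, so $h_{n,i}\rightrightarrows 0$ and $\hat g_{n,i}\rightarrowtail f_i$; and (ii) since $\hat g_{n,i}$ vanishes on $\partial_0 K_n$ while $h_{n,i}$ is harmonic there, orthogonality yields $\mcE_n(g_{n,i})=\mcE_n(\hat g_{n,i})+\mcE_n(h_{n,i})$, whence $\limsup_n\mcE_n(\hat g_{n,i})\le\mcE_\infty(f_i)$. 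Thus $\hat g_{n,i}$ is a recovery sequence for $f_i$ that \emph{vanishes on $\partial_0 K$}. I expect this maximum-principle step to be the delicate point: a naive harmonic extension of a merely uniformly small boundary trace need not have vanishing energy, and it is precisely the Markov/maximum-principle bound that makes the correction cheap.

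Finally I glue: define $\tilde g_n\in\mcF_n$ by $\tilde g_n\circ\Psi_{n,i}=\hat g_{n,i}$, which is well defined and continuous because all pieces vanish on $\partial_0K_n$ while the cell overlaps lie in $\partial_1 K_n$. A routine check, using $\hat g_{n,i}\rightarrowtail f_i$ together with $f_i|_{\partial_0K}=0$ on the gluing seams and a finite-cell subsequence argument, shows $\tilde g_n\rightarrowtail f$. Then by self-similarity of $(\mcE_n,\mcF_n)$ and $r_n\to r_\infty$,
\[
\mcE_n(\tilde g_n)=\sum_{i=1}^N r_n^{-1}\mcE_n(\hat g_{n,i})\longrightarrow\sum_{i=1}^N r_\infty^{-1}\mcE_\infty(f_i),
\]
so the $\Gamma$-liminf inequality applied to $\tilde g_n\rightarrowtail f$ (Remark 2 after Definition \ref{defb4}, together with Lemma \ref{lemmab2}(c)) gives $\mcE_\infty(f)\le\sum_i r_\infty^{-1}\mcE_\infty(f_i)$. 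Combined with the lower bound, this proves the identity, and the membership equivalence follows as noted above.
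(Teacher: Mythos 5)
Your proof is correct, and its overall architecture is the same as the paper's: the inequality $\mcE_\infty(f)\geq\sum_{i=1}^N r_\infty^{-1}\mcE_\infty(f\circ\Psi_i)$ by pushing a recovery sequence for $f$ into the cells (this part is essentially identical to the paper), and the reverse inequality by producing cell-wise recovery sequences that vanish on $\partial_0K_n$ and gluing them along $\partial_1K_n$. The one genuinely different step is the device used to force the cell-wise recovery sequences to vanish on the boundary. The paper does this by a shrinkage operation: it replaces $g_{n,i}$ by $g_{n,i}-\big(g_{n,i}\vee(-\varepsilon_{n,i})\big)\wedge\varepsilon_{n,i}$ with $\varepsilon_{n,i}\geq\sup_{\partial_0K_n}|g_{n,i}|\to 0$; this map is a normal contraction, so by the Markov property it does not increase energy, and it moves the function by at most $\varepsilon_{n,i}$ in sup norm, so the modified sequence is still a recovery sequence. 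You instead subtract the $\mcE_n$-harmonic extension $h_{n,i}$ of the boundary trace and control it by the maximum principle together with the Pythagorean identity $\mcE_n(g_{n,i})=\mcE_n(\hat g_{n,i})+\mcE_n(h_{n,i})$. Both routes work, and your diagnosis of the delicate point is exactly right: $h_{n,i}$ need not have small energy, and your argument correctly only needs its sup norm to vanish. What the paper's truncation buys is brevity — a one-line application of RF5 with no potential theory. What your version costs is three standard but nontrivial ingredients that a complete write-up should justify: existence and uniqueness of the energy minimizer with prescribed values on the infinite closed set $\partial_0K_n$ (Hilbert-space projection onto a nonempty closed convex set, nonempty because $g_{n,i}|_{K_n}\in\mcF_n$ for large $n$), the maximum principle for that minimizer (itself a consequence of the Markov property plus uniqueness of the minimizer), and the variational orthogonality $\mcE_n(h_{n,i},v)=0$ for $v|_{\partial_0K_n}=0$; what it buys in return is the exact energy splitting, which makes $\limsup_n\mcE_n(\hat g_{n,i})\leq\mcE_\infty(f\circ\Psi_i)$ transparent. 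One small caution: $r_n\to r_\infty$ is not a consequence of Theorem \ref{thmb5}; it is part of the paper's standing assumptions for this lemma (or is obtained by passing to a subsequence using the bounds of Lemma \ref{lemma410}), so you should cite it as such rather than as an output of the $\Gamma$-convergence.
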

\begin{proof}
    By Theorem \ref{thmb5}, $\mcE_n$ $\Gamma$-converges  to $\mcE_\infty$ on $C(\square)$. So, there is a sequence $f_{n}\in C(K_n), n\geq 1$ such that $f_{n}\rightarrowtail f$ and $\mcE_\infty(f)=\lim_{n\to\infty}\mcE_{n}(f_{n})$. For each $1\leq i\leq N$, $f_{n}\circ \Psi_{n,i}\rightarrowtail f\circ \Psi_i$ since $x_n\rightarrow x$ implies $\Psi_{n,i}x_n\to \Psi_ix$ by Lemma \ref{lemma72} (b), so
    \[\mcE_\infty(f)=\lim_{n\to\infty}\mcE_{n}(f_{n})=\lim_{n\to\infty}\sum_{i=1}^N r_{n}^{-1}\mcE_{n}(f_{n}\circ \Psi_{n,i})\geq \sum_{i=1}^N r_\infty^{-1}\mcE_\infty(f\circ \Psi_i).\]

    To see the other direction, we use the condition $f|_{\partial_1 K}=0$. For each $1\leq i\leq N$, we pick a sequence $g_{n,i}\in C(K_n)$,  such that $g_{n,i}\rightarrowtail f\circ \Psi_i$ and $\mcE_\infty(f\circ \Psi_i)=\lim\limits_{n\to\infty}\mcE_{n}(g_{n,i})$. In addition, we can assume $g_{n,i}|_{\partial_0K}=0$, since otherwise, for each $1\leq i\leq N$, by Lemma \ref{lemmab2} (c):(c-i)$\Rightarrow$(c-iii) and by the Markov property, we can find a sequence of positive numbers $\varepsilon_{n,i}, n\geq 1$ that converges to $0$, such that $g_{n,i}-\big(g_{n,i}\vee (-\varepsilon_{n,i})\big)\wedge \varepsilon_{n,i}$ satisfies the desired requirement.  Then, for each $n$, we glue $g_{n,i}$ together: define $g_{n}\in C(K_n)$ by $g_{n}\circ \Psi_{n,i}=g_{n,i}$, $1\leq i\leq N$. Then,
    \[\sum_{i=1}^N r_\infty^{-1}\mcE_\infty(f\circ \Psi_i)=\lim\limits_{n\to\infty}\sum_{i=1}^Nr_{n}^{-1}\mcE_{n}(g_{n,i})=\lim\limits_{n\to\infty}\mcE_{n}(g_{n})\geq \mcE_\infty(f).\]
\end{proof}

Since the form $(\mcE_\infty,\mcF_\infty)$ is strongly local and regular, we can apply the tool of energy measure. For each function $f\in \mcF_\infty$, noticing that $f\in C(K)$ in our setting, we can define the \textit{energy measure} $\nu_f$ associated with $f$ to be the unique Radon measure on $K$ such that
\[\int_K g(x)\nu_f(dx)=2\mcE_\infty(fg,f)-\mcE_\infty(f^2,g)\quad\hbox{ for every } g\in\mcF_\infty.\]
In particular, it is well known that $\nu_f(K)=2\mcE_\infty(f)$ (see  \cite[Lemma 3.2.3]{FOT}). A useful fact is that for any Borel subset $A\subset K$,  $\nu_f(A)$  only depends on the value of $f$ on $A$.

\begin{lemma}\label{lemma79}
    Let $f,g\in \mcF_\infty$ and assume $f=g$ on a Borel subset $A$ in $K$, then $\nu_f(A)=\nu_g(A)$.
\end{lemma}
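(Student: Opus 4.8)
The plan is to deduce the statement from two features of the strongly local, regular form $(\mcE_\infty,\mcF_\infty)$: the locality of energy measures on \emph{open} sets, and a truncation argument upgrading this to arbitrary Borel sets. First I would reduce to a single function. Introducing the mutual energy measure $\nu_{p,q}=\frac12(\nu_{p+q}-\nu_p-\nu_q)$, bilinearity and symmetry give $\nu_f(A)-\nu_g(A)=\nu_{u,v}(A)$ with $u=f-g$, $v=f+g$, and the Cauchy--Schwarz inequality $|\nu_{u,v}(A)|\le\nu_u(A)^{1/2}\nu_v(A)^{1/2}$ yields $|\nu_f(A)-\nu_g(A)|\le\nu_u(A)^{1/2}(2\mcE_\infty(v))^{1/2}$. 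Since $f=g$ on $A$, the continuous function $u$ vanishes pointwise on $A$, so everything reduces to the claim that $\nu_u(A)=0$ whenever $u\in\mcF_\infty$ vanishes on a Borel set $A$.

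Second, I would settle the open-set version: if $u\in\mcF_\infty$ is constant, say $\equiv c$, on an open set $U$, then $\nu_u(U)=0$. For $g\in\mcF_\infty$ with $\mathrm{supp}(g)\subset U$ we have $ug=cg$ everywhere, so the defining identity gives $\int_K g\,d\nu_u=2\mcE_\infty(ug,u)-\mcE_\infty(u^2,g)=2c\,\mcE_\infty(g,u)-\mcE_\infty(u^2,g)$; as both $u$ and $u^2$ are constant on the open neighbourhood $U$ of $\mathrm{supp}(g)$, strong locality of $(\mcE_\infty,\mcF_\infty)$ forces $\mcE_\infty(g,u)=\mcE_\infty(u^2,g)=0$, hence $\int_K g\,d\nu_u=0$. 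Invoking regularity to produce, for each compact $F\subset U$, a cutoff $g\in\mcF_\infty\cap C(K)$ with $0\le g\le1$, $g\equiv1$ on $F$ and $\mathrm{supp}(g)\subset U$, positivity of $\nu_u$ gives $\nu_u(F)\le\int_K g\,d\nu_u=0$; letting $F\uparrow U$ gives $\nu_u(U)=0$.

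Third, I would run the truncation to reach a general Borel $A$. For $\varepsilon>0$ set $\phi_\varepsilon(t)=\mathrm{sgn}(t)(|t|-\varepsilon)^+$, a normal contraction fixing $0$, and put $u_\varepsilon=\phi_\varepsilon(u)\in\mcF_\infty$, $w_\varepsilon=u-u_\varepsilon$. Then $u_\varepsilon$ vanishes on the open set $U_\varepsilon=\{|u|<\varepsilon\}\supset A$, so the open-set claim gives $\nu_{u_\varepsilon}(A)\le\nu_{u_\varepsilon}(U_\varepsilon)=0$. Expanding $\nu_u=\nu_{u_\varepsilon}+2\nu_{u_\varepsilon,w_\varepsilon}+\nu_{w_\varepsilon}$ on $A$ and applying Cauchy--Schwarz with $\nu_{u_\varepsilon}(A)=0$ yields $\nu_u(A)=\nu_{w_\varepsilon}(A)\le\nu_{w_\varepsilon}(K)=2\mcE_\infty(w_\varepsilon)$. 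It then remains to show $\mcE_\infty(w_\varepsilon)\to0$: since $\mcE_\infty(u_\varepsilon)\le\mcE_\infty(u)$ the family $\{u_\varepsilon\}$ is bounded in the Hilbert space $\mcF_\infty/\!\sim$, while $u_\varepsilon\to u$ uniformly; continuity of the difference functionals $h\mapsto h(p)-h(q)$ on this space (via $|h(p)-h(q)|^2\le R_\infty(p,q)\mcE_\infty(h)$) identifies the unique weak limit as $u$, so by lower semicontinuity $\mcE_\infty(u_\varepsilon)\to\mcE_\infty(u)$ and hence $\mcE_\infty(w_\varepsilon)=\mcE_\infty(u)-2\mcE_\infty(u,u_\varepsilon)+\mcE_\infty(u_\varepsilon)\to0$. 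Letting $\varepsilon\to0$ gives $\nu_u(A)=0$, and combining with the first step proves $\nu_f(A)=\nu_g(A)$.

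I expect the main obstacle to be precisely this passage from open to general Borel sets, namely the truncation step and its energy convergence $\mcE_\infty(w_\varepsilon)\to0$; by contrast, the locality on open sets is a direct consequence of strong locality together with the existence of cutoffs from regularity, and the initial reduction via polarization and Cauchy--Schwarz is purely formal.
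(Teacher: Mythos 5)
Your proof is correct, and its overall skeleton matches the paper's: both arguments reduce the lemma to the single claim that the energy measure of $u=f-g$ does not charge the Borel set $A$ on which $u$ vanishes. You do this reduction by polarization plus Cauchy--Schwarz; the paper does it by the triangle inequality $\sqrt{\nu_f(A)}\le \sqrt{\nu_g(A)}+\sqrt{\nu_{f-g}(A)}$ cited from \cite[Page 123]{FOT} --- the two reductions are interchangeable. The genuine difference is in how that key claim is handled: the paper simply cites \cite[Lemma 2.7]{BBKT} for the fact that $\nu_h$ does not charge $\{x\in K: h(x)=0\}$, whereas you prove it from scratch --- strong locality plus cutoffs from regularity give the statement for open sets, and the truncation $u_\varepsilon=\phi_\varepsilon(u)$ together with the energy convergence $\mcE_\infty(u-u_\varepsilon)\to 0$ (obtained from weak compactness in the Hilbert space $\mcF_\infty/\!\sim$, with the weak limit pinned down by the point-difference functionals, which are bounded via $|h(p)-h(q)|^2\le R_\infty(p,q)\,\mcE_\infty(h)$) upgrades it to arbitrary Borel sets. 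All the ingredients you invoke are legitimately available here, since the paper establishes just before this lemma (Lemma \ref{lemma77} and the surrounding discussion) that $(\mcE_\infty,\mcF_\infty)$ is strongly local and regular, hence a regular Dirichlet form on $L^2(K,\mu)$; in particular the normal contraction $\phi_\varepsilon$ operates by \cite[Theorem 1.4.2]{FOT}, a point you use tacitly but which deserves a word, since the resistance-form axiom (RF5) only postulates the unit contraction. What the paper's route buys is brevity; what yours buys is self-containedness: you have in effect reproved the cited \cite[Lemma 2.7]{BBKT}, and your identification of the weak limit through resistance-metric-bounded point evaluations is a nice use of the resistance-form structure in place of the $L^2$-based approximation arguments that usually underlie that citation.
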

\begin{proof}
    We have the triangle inequality for the energy measures $\sqrt{\nu_f(A)}\leq \sqrt{\nu_g(A)}+\sqrt{\nu_{f-g}(A)}$ (see \cite[Page 123]{FOT}). Also,  $\nu_{h}\big(\{x\in K:h(x)=0\}\big)=0$ for any $h\in \mcF_\infty$ (see \cite[Lemma 2.7]{BBKT}). These two observations give that $\nu_f(A)\leq \nu_g(A)$. In a same way, $\nu_g(A)\leq \nu_f(A)$. The lemma follows.
\end{proof}

We can make Lemma \ref{lemma78} a little stronger with the language of the energy measure.

\begin{lemma}\label{lemma710}
    Let $f\in \mcF_\infty$, then for any Borel set $A\subset K\setminus \partial_1K$, we have
    \[\nu_f(A)=\sum_{i=1}^Nr_\infty^{-1}\nu_{f\circ \Psi_i}\big(\Psi_i^{-1}(A)\big).\]
\end{lemma}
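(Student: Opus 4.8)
The plan is to localise the energy measure and upgrade the partial self-similarity of Lemma~\ref{lemma78} from the energy to the energy measure. Write $U_i=\Psi_i(K\setminus\partial_0K)$ for $1\le i\le N$. Since distinct cells meet only along their boundaries, one has the disjoint decomposition $K\setminus\partial_1K=\bigsqcup_{i=1}^N U_i$ with $U_i\cap\partial_1K=\emptyset$ and $\Psi_iK\setminus\partial_1K=U_i$; consequently, for Borel $A\subset K\setminus\partial_1K$ we have $\Psi_i^{-1}(A)=\Psi_i^{-1}(A\cap U_i)\subset K\setminus\partial_0K$, and $\nu_{f\circ\Psi_j}\big(\Psi_j^{-1}(A)\big)=0$ whenever $A\subset U_i$ and $j\ne i$. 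Hence by countable additivity it suffices to fix $i$ and prove that $\nu_f$ and $r_\infty^{-1}(\Psi_i)_*\nu_{f\circ\Psi_i}$ agree on the open set $U_i$. (Since $\mcF_\infty$ is not yet known to be self-similar, $f\circ\Psi_i$ may fail to lie in $\mcF_\infty$; but $\nu_{f\circ\Psi_i}$ is well defined on compact subsets of $K\setminus\partial_0K$ by strong locality, replacing $f\circ\Psi_i$ there by a function of $\mcF_\infty$ as below.)

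First I would prove the scaling for functions vanishing on $\partial_1K$. On the subspace $S=\{h\in\mcF_\infty:h|_{\partial_1K}=0\}$, Lemma~\ref{lemma78} gives $\mcE_\infty(h)=\sum_j r_\infty^{-1}\mcE_\infty(h\circ\Psi_j)$ and $h\circ\Psi_j\in\mcF_\infty$; polarising this quadratic identity inside $S$ yields $\mcE_\infty(u,v)=\sum_j r_\infty^{-1}\mcE_\infty(u\circ\Psi_j,v\circ\Psi_j)$ for all $u,v\in S$. Now fix $\tilde f\in S$ and a test function $g\in\mcF_\infty$ with compact support in $U_i$ (so $g\in S$); then $\tilde f g,\tilde f^2\in S$ as well, using that $\mcF_\infty\cap C(K)$ is an algebra \cite[Theorem~1.4.2]{FOT}. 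The definition of the energy measure together with the bilinear decomposition gives
\[\int_K g\,d\nu_{\tilde f}=2\mcE_\infty(\tilde f g,\tilde f)-\mcE_\infty(\tilde f^2,g)=\sum_{j=1}^N r_\infty^{-1}\Big(2\mcE_\infty\big((\tilde f\circ\Psi_j)(g\circ\Psi_j),\tilde f\circ\Psi_j\big)-\mcE_\infty\big((\tilde f\circ\Psi_j)^2,g\circ\Psi_j\big)\Big).\]
Because $\mathrm{supp}(g)\subset U_i$ is disjoint from every $\Psi_jK$ with $j\ne i$, we have $g\circ\Psi_j\equiv0$ for $j\ne i$, so only the $j=i$ term survives and the right-hand side equals $r_\infty^{-1}\int_K (g\circ\Psi_i)\,d\nu_{\tilde f\circ\Psi_i}=r_\infty^{-1}\int_K g\,d\big((\Psi_i)_*\nu_{\tilde f\circ\Psi_i}\big)$.

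To pass to a general $f\in\mcF_\infty$ I would use locality. Given $g\in\mcF_\infty$ with compact support $A\subset U_i$, choose a cutoff $\chi\in\mcF_\infty\cap C(K)$ with $0\le\chi\le1$, $\chi\equiv1$ on an open neighbourhood $O$ of $A$ and $\mathrm{supp}\,\chi\subset U_i$ compact (such $\chi$ exists by regularity of $(\mcE_\infty,\mcF_\infty)$, Lemma~\ref{lemma77}). Then $\tilde f:=\chi f\in S$ with $\tilde f=f$ on $O$, while $\tilde f\circ\Psi_i=f\circ\Psi_i$ on the open neighbourhood $\Psi_i^{-1}(O)$ of $\mathrm{supp}(g\circ\Psi_i)=\Psi_i^{-1}(A)$. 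Applying Lemma~\ref{lemma79} twice and the identity just proved,
\[\int_K g\,d\nu_f=\int_K g\,d\nu_{\tilde f}=r_\infty^{-1}\int_K(g\circ\Psi_i)\,d\nu_{\tilde f\circ\Psi_i}=r_\infty^{-1}\int_K (g\circ\Psi_i)\,d\nu_{f\circ\Psi_i}.\]
This holds for every $g\in\mcF_\infty$ compactly supported in $U_i$; since such $g$ are uniformly dense in $C_c(U_i)$ (multiply an approximant from the dense set $\mcF_\infty$ by $\chi$), the finite Radon measures $\nu_f$ and $r_\infty^{-1}(\Psi_i)_*\nu_{f\circ\Psi_i}$ agree on $U_i$. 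Summing over $i$ and using the first paragraph completes the proof.

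I expect the main obstacle to be exactly this transition from functions killed on $\partial_1K$ to arbitrary $f$: Lemma~\ref{lemma78} only decomposes the energy of functions vanishing on $\partial_1K$, and for general $f$ neither $f$ nor $f\circ\Psi_i$ is controlled on the boundary (indeed $f\circ\Psi_i$ need not even lie in $\mcF_\infty$). The resolving device is to test only against functions supported in the open interior $U_i$ and to replace $f$ by the localised function $\chi f\in S$, both substitutions being invisible to the energy measure by strong locality (Lemma~\ref{lemma79}); the only remaining points requiring care are the existence of the continuous cutoff $\chi$ and the density of the admissible test functions in $C_c(U_i)$, both furnished by regularity of the limit form.
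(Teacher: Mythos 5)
Your proposal is correct and takes essentially the same route as the paper: both proofs polarize the quadratic identity of Lemma \ref{lemma78} for functions vanishing on $\partial_1K$, then handle general $f\in\mcF_\infty$ by multiplying with a cutoff built from regularity of $(\mcE_\infty,\mcF_\infty)$ and invoking the locality Lemma \ref{lemma79} twice, finishing with a density/approximation step. The only difference is organizational: you localize cell-by-cell, testing against functions compactly supported in each $U_i=\Psi_i(K\setminus\partial_0K)$, whereas the paper tests against all $g\in\mcF_\infty$ with $g|_{\partial_1K}=0$ (extended to continuous such $g$ by density) and then treats a general $f$ directly on compact sets $A\subset K\setminus\partial_1K$ via a cutoff $\psi$ with $\psi|_A=1$, $\psi|_{\partial_1K}=0$.
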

\begin{proof}
    First, we assume $f|_{\partial_1 K}=0$. Then, for any $g\in \mcF_\infty$ such that $g|_{\partial_1K}=0$, we can see from Lemma \ref{lemma78} that
    \begin{equation}\label{eqn91}
        \begin{aligned}
            \int_K g(x)\nu_f(dx)&=2\mcE_\infty(fg,f)-\mcE_\infty(f^2,g)\\&=\sum_{i=1}^Nr_\infty^{-1}\Big(2\mcE_\infty\big((f\circ \Psi_i)\cdot(g\circ \Psi_i),f\circ \Psi_i\big)-\mcE_\infty(f^2\circ \Psi_i,g\circ \Psi_i)\Big)\\
            &=\sum_{i=1}^Nr_\infty^{-1}\int_{K}g\circ \Psi_i(x)\nu_{f\circ \Psi_i}(dx).
        \end{aligned}
    \end{equation}
    We can check that $\{g\in \mcF_\infty:g|_{\partial_1K}=0\}$ is dense in $\{g\in C(K):g|_{\partial_1K}=0\}$: if $g\in C(K)$ and $g|_{\partial_1K}=0$, then for any $\varepsilon>0$, there is $g'\in \mcF_\infty$ such that $\|g'-g\|_{C(K)}\leq \varepsilon$ since $(\mcE_\infty,\mcF_\infty)$ is regular, hence by letting $g''=g'-\big(g'\vee (-\varepsilon)\big)\wedge \varepsilon$, we have $g''\in \mcF_\infty,g''|_{\partial_1 K}=0$, and $\|g''-g\|_{C(K)}\leq 2\varepsilon$. So (\ref{eqn91}) holds for any $g\in C(K)$ satisfying $g|_{\partial_1K}=0$. Hence, the lemma holds for any $f\in \mcF_\infty$ with $f|_{\partial_1K}=0$.

    For general $f\in \mcF_\infty$, we consider compact $A\subset K\setminus \partial_1K$. Since $(\mcE_\infty,\mcF_\infty)$ is regular, we can find $\psi\in \mcF_\infty$ so that $\psi|_A=1$ and $\psi|_{\partial_1K}=0$. So by Lemma \ref{lemma79},
    \[\nu_f(A)=\nu_{f\psi}(A)=\sum_{i=1}^Nr_\infty^{-1}\nu_{(f\psi)\circ \Psi_i}\big(\Psi_i^{-1}(A)\big)=\sum_{i=1}^Nr_\infty^{-1}\nu_{f\circ \Psi_i}\big(\Psi_i^{-1}(A)\big).\]
    Since $\nu_f$ is a Radon measure, the lemma follows immediately.
\end{proof}

Next, we observe that the energy measure on the boundary of each cell is $0$.
\begin{lemma}\label{lemma711}
    For any $f\in \mcF_\infty$, we have $\nu_f(\partial_1K)=0$.
\end{lemma}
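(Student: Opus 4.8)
The plan is to show that the finite union of line segments forming $\partial_1K$ is null for every energy measure by reducing to a single segment and then estimating the energy that can sit exactly on it. Since $\partial_1K=\bigcup_{i=1}^N\Psi_i\partial_0K$ and each $\Psi_i\partial_0K$ is a union of four closed line segments, subadditivity of $\nu_f$ gives $\nu_f(\partial_1K)\le\sum_i\nu_f(\Psi_i\partial_0K)$, so it suffices to prove $\nu_f(\ell)=0$ for a single closed segment $\ell\subset\partial_1K$ and every $f\in\mcF_\infty$; the overlaps of these segments are contained in the segments themselves, so no separate treatment of the corner points is needed. By the locality of energy measures (Lemma \ref{lemma79}), $\nu_f(\ell)$ depends only on $f|_\ell$, which I will use to replace $f$ by convenient representatives when estimating.

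The natural first attempt is to feed $\ell$ into the self-similar scaling of Lemma \ref{lemma710}. Covering $\ell$ by the cells meeting it, $\mathcal C_n=\{w\in W_n:\Psi_wK\cap\ell\neq\emptyset\}$ with $|\mathcal C_n|\le Ck^n$ (a segment of $d$-length $O(1)$ meets $O(k^n)$ cells of diameter $\asymp k^{-n}$), one has $\Sigma_n:=\bigcup_{w\in\mathcal C_n}\Psi_wK\downarrow\ell$, hence $\nu_f(\ell)=\lim_n\nu_f(\Sigma_n)$. Decomposing $\Sigma_n$ into cell interiors and the level‑$1$ skeleton and applying Lemma \ref{lemma710} to the interiors, however, one finds that the charge sitting exactly on $\ell$ cancels out: the identity produced by passing to the limit is $\nu_f(\ell)=\nu_f(\ell)+\sum_{v\in\mathcal C_1}r_\infty^{-1}\nu_{f\circ\Psi_v}(\Psi_v^{-1}\ell)$, so it yields no information on $\nu_f(\ell)$ itself. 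This reflects the structural fact that the interior scaling of Lemma \ref{lemma710} is blind to mass carried by $\partial_1K$. Consequently the resistance estimate must enter in an essential way, and I expect this to be the main obstacle of the proof.

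The mechanism I would pursue is therefore a direct bound on $\nu_f(\Sigma_n)$ coming from Theorem \ref{thm52}. The estimate $R_\infty(x,y)\asymp d(x,y)^{\theta_\infty}$ (with $\theta_\infty=-\log r_\infty/\log k<1$, available from Lemma \ref{lemma77}) quantifies how resistance-thin the segment $\ell$ is: it controls the oscillation of any $f\in\mcF_\infty$ across a boundary layer of thickness $k^{-n}$ in terms of the local energy, so that the energy measure the function can place on the $n$-th layer around $\ell$ should contract by a definite factor from scale $n$ to scale $n+1$. Combined with the interior scaling of Lemma \ref{lemma710} and with the one-sided self-similar inequality $\mcE_\infty(f)\ge\sum_{w\in W_n}r_\infty^{-n}\mcE_\infty(f\circ\Psi_w)$ — which holds for every $f\in\mcF_\infty$ by taking a $\Gamma$-convergence recovery sequence and applying cellwise lower semicontinuity, exactly as in the first half of Lemma \ref{lemma78} — this keeps the total energy bounded while forcing the layer masses to a geometrically decaying sequence, whence $\nu_f(\Sigma_n)\to0$ and $\nu_f(\ell)=0$. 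Making the contraction rigorous (in particular, localizing the oscillation bound so that it is driven by $\nu_f$ restricted to the layer rather than by the global energy) is the delicate point.

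Finally, I would record the payoff, which explains why this lemma is the crux of the section: once $\nu_f(\partial_1K)=0$ is known, applying Lemma \ref{lemma710} to $A=K\setminus\partial_1K$ together with the mass identity $\nu_g(K)=2\mcE_\infty(g)$ removes the restriction $f|_{\partial_1K}=0$ of Lemma \ref{lemma78} and upgrades the one-sided inequality above to the full self-similar identity $\mcE_\infty(f)=\sum_{i=1}^N r_\infty^{-1}\mcE_\infty(f\circ\Psi_i)$ for every $f\in\mcF_\infty$. This is precisely the self-similarity that, via the uniqueness statement Theorem \ref{thm61}, will allow the identification $R_\infty=R$.
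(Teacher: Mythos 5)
Your reduction to a single segment, your observation that Lemma \ref{lemma710} is structurally blind to mass carried by $\partial_1K$, and your closing account of how the lemma upgrades Lemma \ref{lemma78} to full self-similarity are all correct (as is the one-sided inequality $\mcE_\infty(f)\ge\sum_{w\in W_n}r_\infty^{-n}\mcE_\infty(f\circ\Psi_w)$, which indeed follows from the recovery-sequence half of Lemma \ref{lemma78} without any boundary condition). But the core of your argument --- the claim that the layer masses $\nu_f(\Sigma_n)$ contract by a definite factor from scale $n$ to scale $n+1$ --- is never proved, and you say so yourself (``making the contraction rigorous \ldots is the delicate point''). This is not a technical detail to be filled in later; it \emph{is} the lemma. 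The obstruction is exactly the one you flag: the resistance estimates of Theorem \ref{thm52} and Lemma \ref{lemma77} control oscillation through the \emph{global} energy, $|f(x)-f(y)|^2\le R_\infty(x,y)\,\mcE_\infty(f)$, and nothing in the paper localizes such a bound so that it is driven by $\nu_f$ restricted to a neighborhood of $\ell$. Note also that $\ell$ is far from negligible for the form: points, hence segments, have positive capacity for a resistance form, so $\nu_f(\ell)=0$ is not a statement about $\ell$ being small; and energy measures on fractals are typically singular with respect to the natural self-similar measure, so metric ``thinness'' of $\ell$ cannot by itself force $\nu_f(\ell)=0$. Your plan therefore reduces the lemma to an unproven claim which, with the tools available in the paper, there is no visible way to establish.

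The paper sidesteps this difficulty by importing external machinery, and this is worth knowing because it explains some of the paper's architecture: from the two-sided estimate $R_\infty(x,y)\asymp d(x,y)^{\theta_\infty}$ of Lemma \ref{lemma77} and Kigami's equivalence theorems \cite[Theorems 15.10 and 15.11]{ki4}, the Dirichlet form $(\mcE_\infty,\mcF_\infty)$ on $L^2(K,\mu)$ satisfies sub-Gaussian heat kernel estimates; then Proposition \ref{prop24} (each $\Psi_i(K\setminus\partial_0K)$ is a uniform domain --- this is the sole reason that proposition appears in Section \ref{sec2}) places each cell in the setting of Murugan's theorem \cite[Theorem 2.9]{Mathav}, which asserts precisely that under such heat kernel estimates the energy measure does not charge the boundary of a uniform domain. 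Applying this on each cell $\Psi_iK$ and summing over $i$ gives $\nu_f(\partial_1K)=0$. A self-contained argument along the lines you propose would essentially amount to reproving Murugan's theorem in this special case, which is a substantial piece of work, not a routine localization of the resistance estimate.
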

\begin{proof}
    By the resistance estimate in Lemma \ref{lemma77}, and by applying Theorem 15.10 and 15.11 in \cite{ki4}, we know that the Dirichlet form $(\mcE_\infty,\mcF_\infty)$ on $L^2(K,\mu)$ has a sub-Gaussian heat kernel estimate. Then, the lemma follows by applying Proposition \ref{prop24} locally on each cell $\Psi_iK$ and by \cite[Theorem 2.9]{Mathav} of Murugan.
\end{proof}

\begin{proof}[Proof of Theorem \ref{thm71}]
    (i)$\Rightarrow$(ii). If $R_n\rightarrowtail R$, then $R_n,n\geq 1$ are equicontinuous by Lemma  \ref{lemmab2} (b), hence $d_{G,n},n\geq 1$ are equicontinuous by Theorem \ref{thm52}.

    (ii)$\Rightarrow$(i). Let $n_l$, $l\geq 1$ be the subsequence, $R_\infty$ be the limit resistance metric, and $(\mcE_\infty,\mcF_\infty)$ be the limit resistance form on $K$ as in Lemma \ref{lemma77}. We first prove that $(\mcE_\infty,\mcF_\infty)$ is a self-similar $D_4$-symmetric resistance form on $K$ such that $R_\infty$ is jointly continuous and $R_\infty(L_2,L_4)=1$.

In fact, we have seen that $(\mcE_\infty,\mcF_\infty)$ and $(\mcE,\mcF)$ satisfy  the sub-Gaussian heat kernel estimates, hence by the uniqueness of the walk dimension, we know that $\mcF=\mcF_\infty$ and $r=r_\infty$. In particular, we know that
    \[
    \mcF_\infty=\{f\in l(K):\,f\circ\Psi_i\in\mcF_\infty\hbox{ for each }1\leq i\leq N\}.
    \]
    Moreover, combining Lemmas \ref{lemma710} and \ref{lemma711}, for every $f\in \mcF_\infty$, we see that
    \[
    \begin{aligned}
        \mcE_\infty(f)=\nu_{f}(K)&=\nu_{f}(K\setminus \partial_1K)\\&=\sum_{i=1}^Nr_\infty^{-1}\nu_{f\circ\Psi_i}(K\setminus\partial_0K)=\sum_{i=1}^Nr_\infty^{-1}\nu_{f\circ\Psi_i}(K)=\sum_{i=1}^Nr_\infty^{-1}\mcE_\infty(f\circ \Psi_i).
    \end{aligned}
    \]
    Hence, $(\mcE_\infty,\mcF_\infty)$ is a self-similar resistance form on $K$. The $D_4$-symmetry of $(\mcE_\infty,\mcF_\infty)$ and joinly continuity of  $R_\infty$ follow in an obvious way. It remains to prove $R_\infty(L_2,L_4)=1$.

    On one hand, we let $h_{n_l}\in\mcF_{n_l}$ such that
    \[
    h_{n_l}|_{L_2}=0,\ h_{n_l}|_{L_4}=1,\ \hbox{ and }\mcE_{n_l}(h_{n_l})=R_{n_l}(L_2,L_4)^{-1}=1 \text{ for every } l\geq 1.
    \]
    By passing to a subsequence if necessary, we can find $h\in\mcF_\infty$ such that $h_{n_l}\rightarrowtail h$, so by the $\Gamma$-convergence of $\mcE_{n_l}$ to $\mcE_\infty$,
    \[
    h|_{L_2}=0,\ h|_{L_4}=1,\ \hbox{ and }\mcE_\infty(h)\leq \liminf_{l\to\infty}\mcE_{n_l}(h_{n_l})=1.
    \]
    This implies that $R_\infty(L_2,L_4)\geq 1$.

    On the other hand, let $h_\infty\in \mcF_\infty$ such that
    \[
    h_\infty|_{L_2}=0,\ h_\infty|_{L_4}=1,\ \hbox{ and }\mcE_\infty(L_2,L_4)=R_\infty(L_2,L_4)^{-1}.
    \]
    By the $\Gamma$-convergence of $\mcE_{n_l}$ to $\mcE_\infty$, we can find $h'_{n_l}\in\mcF_{n_l}$ such that $h'_{n_l}\rightarrowtail h_\infty$ and
    \[
    1=\limsup\limits_{l\to\infty}R_{n_l}(L_2,L_4)^{-1}\leq \lim\limits_{l\to\infty}\mcE_{n_l}(h'_{n_l})=R_{\infty}(L_2,L_4)^{-1}.
    \]
    So $R_\infty(L_2,L_4)\leq 1$.

    Thus we have proved that $(\mcE_\infty,\mcF_\infty)$ is self-similar and $D_4$-symmetric, $R_\infty$ is joinly continuous and $R_\infty(L_2,L_4)=1$, so by Theorem \ref{thm61}, $R_\infty=R$. As a consequence, $R_{n_l}\rightarrowtail R$. The same result holds for any subsequence of $R_n$, so we obtain that $R_{n}\rightarrowtail R$. By Theorem \ref{thmc1}, the statement (i) follows.
\end{proof}

%\subsection*{Conflicts of interest} The Authors declare that there is no conflict of interest.

%\subsection*{Data availability statement.} Data sharing not applicable to this article as no datasets were generated or analysed during the current study.

\bibliographystyle{amsplain}

%\clearpage
%\newpage
\end{document}